\documentclass[10pt,leqno]{amsart}
\usepackage{amsmath,amssymb,amsthm,mathtools,mathrsfs}
\usepackage{microtype}
\usepackage[colorlinks=true,linkcolor=blue,citecolor=blue,urlcolor=blue]{hyperref}
\numberwithin{equation}{section}
\newtheorem{theorem}{Theorem}[section]
\newtheorem{lemma}[theorem]{Lemma}
\newtheorem{proposition}[theorem]{Proposition}
\newtheorem{corollary}[theorem]{Corollary}
\theoremstyle{definition}

\theoremstyle{remark}
\newtheorem{remark}[theorem]{Remark}
\newcommand{\R}{\mathbb R}
\newcommand{\T}{\mathbb T}
\newcommand{\Ric}{\operatorname{Ric}}
\newcommand{\tr}{\operatorname{tr}}
\newcommand{\Sym}{\operatorname{Sym}}

\allowdisplaybreaks[1]
\title[Li-Yau and Harnack estimates]{Li-Yau and Harnack estimates for nonlinear parabolic equations}
\author[Jeffres]{Thalia Jeffres}
\address{Department of Mathematics and Statistics, Wichita State University,
Wichita, KS 67260-0033, USA}
\email{thalia.jeffres@wichita.edu}
\author[Li]{Xiaolong Li}
\address{Department of Mathematics and Statistics, Auburn University,
Auburn, AL 36849, USA}
\email{xil0005@auburn.edu}
\thanks{XL's research is partially supported by NSF-DMS \#2553660 and a
start-up grant at Auburn University.}
\subjclass[2020]{Primary 35K55, 35K92; Secondary 35B45, 35D40, 35K65, 53C21}
\keywords{Li-Yau estimates, Harnack inequalities, viscosity solutions,
weighted $p$-Laplacian, two-point maximum principle, Bakry-\'{E}mery Ricci curvature}
\date{}
\begin{document}
\begin{abstract}
We establish global Harnack inequalities for positive viscosity solutions
of several classes of nonlinear parabolic equations. The equations include positively homogeneous uniformly parabolic flows on Euclidean space and flat tori, normalized weighted
$p$-Laplacian heat flows, and homogeneous isotropic diffusions on closed smooth metric measure spaces. For the normalized infinity-Laplacian heat flow, no curvature assumption is needed.
For the ordinary weighted $p$-Laplacian, we obtain pressure
estimates under nonnegative Bakry-\'{E}mery Ricci lower bounds.
The proofs combine comparisons at two independent space-time points using the maximum principle for semicontinuous functions with weighted geodesic variations. The corresponding Li-Yau inequalities then follow in the first-order viscosity sense.
\end{abstract}
\maketitle

% BEGIN SECTION 1
\section{Introduction}\label{sec:introduction}

Harnack inequalities relate the values of a positive solution of a partial differential equation at different points and provide a basic tool for studying regularity, heat kernels,
and the geometry of the underlying space. For parabolic equations, the
comparison also reflects the direction of time. Moser~\cite{Moser1964}
proved a parabolic Harnack inequality for uniformly parabolic equations
in divergence form with measurable coefficients. His integral method
allows one to obtain regularity without differentiating the coefficients.
For the heat equation on Riemannian manifolds, Li and
Yau~\cite{LiYau1986} revealed a more precise relation between Harnack
inequalities and curvature. Their estimate has become a basic tool in
geometric analysis.

Let $(M^n,g)$ be a complete Riemannian manifold with nonnegative Ricci
curvature, and let $u:M \times (0,\infty) \to \R$ be a positive solution of the heat equation $u_t=\Delta u$.
Li and Yau~\cite{LiYau1986} proved the differential Harnack estimate
\begin{equation}\label{intro:classical-li-yau}
 \frac{|\nabla u|^2}{u^2}-\frac{u_t}{u}\leq\frac{n}{2t}
\end{equation}
for all $(x,t) \in M \times (0, \infty)$.
Integrating this inequality along space-time curves gives
\begin{equation}\label{intro:classical-harnack}
 u(x,t)\geq u(y,s)\left(\frac{s}{t}\right)^{\frac{n}{2}}
 \exp\!\left(-\frac{d(x,y)^2}{4(t-s)}\right),
\end{equation}
for all $x,y\in M$ and $0<s<t$.
Both the dimension in the time factor and the coefficient of the
distance term are determined by the Euclidean heat kernel. In addition
to its consequences for positive solutions, this estimate leads to heat
kernel bounds and eigenvalue estimates; see \cite{LiYau1986,Davies1989}.
It also provides a model for differential Harnack inequalities in
geometric evolution equations; see Hamilton~\cite{HamiltonRicci1993}
and Chow~\cite{Chow1992}.

There have been many developments of the Li-Yau estimate under other
curvature assumptions. Davies~\cite{Davies1989} refined the estimate
under a negative Ricci lower bound. Hamilton~\cite{Hamilton1993}
established a matrix Harnack inequality under stronger curvature
hypotheses, while Souplet and Zhang~\cite{SoupletZhang2006} obtained
localized spatial gradient estimates and applications to Liouville
theorems. Li and Xu~\cite{LiXu2011} derived differential Harnack
inequalities with time-dependent coefficients under a Ricci lower
bound. Bakry, Bolley, and Gentil~\cite{BakryBolleyGentil2017}
proved a refined Li-Yau inequality in the more general setting of
diffusion semigroups satisfying a curvature-dimension condition.
These results show that the curvature contribution can be expressed in
different ways, and that the choice of the coefficients is important
both for small times and for large times.
More recently, Q.~S.~Zhang~\cite{ZhangSharp2025} proved a Li-Yau
gradient bound on compact manifolds without assuming nonnegative Ricci
curvature. The coefficient of $\partial_t\log u$ remains one, while
the upper bound also involves the curvature and diameter. Thus the
gradient-time combination in \eqref{intro:classical-li-yau} can still
be controlled when the Ricci curvature has a negative part, although
the right-hand side changes. Zhang's survey~\cite{ZhangSurvey}
discusses these developments and related gradient and heat kernel
estimates. The distinction between retaining the coefficient one and
retaining the precise bound \(\frac{n}{2t}\) is relevant when comparing
extensions of the classical inequality.

For nonlinear diffusion, the appropriate differential quantity need
not be the logarithm of the solution. The pressure estimates of
Aronson and B\'enilan~\cite{AronsonBenilan1979} for the porous medium
equation initiated a corresponding theory in which the time scale
depends on the homogeneity of the equation. Esteban and
V\'azquez~\cite{EstebanVazquez1990} studied pressure estimates for
the parabolic $p$-Laplacian. Auchmuty and Bao~\cite{AuchmutyBao1994}
derived Harnack inequalities by integrating more general differential
inequalities and optimizing the resulting space-time cost. On
Riemannian manifolds, Kotschwar and Ni~\cite{KotschwarNi2009}
established gradient estimates for $p$-harmonic functions and related
parabolic equations. Lu, Ni, V\'azquez, and
Villani~\cite{LuNiVazquezVillani2009} developed local
Aronson-B\'enilan estimates for porous medium and fast diffusion
equations on manifolds. Weighted differential Harnack and entropy
estimates were obtained by Wang, Yang, and
Chen~\cite{WangYangChen2013} and Wang~\cite{Wang2017,Wang2021}.
We will make a precise comparison with Wang's weighted pressure
estimate below.

The nonlinear setting presents an additional difficulty. Even for
smooth initial data, solutions of degenerate or singular equations
need not have the derivatives used in a classical Bochner argument.
For nondivergence-form equations, viscosity solutions provide a
natural framework, but a differential estimate must then be interpreted
through test functions. At a critical point of the solution, the
operator itself may be singular or depend on the direction of the
gradient. Our interest is in explicit global comparisons of the form
\eqref{intro:classical-harnack}, or their pressure counterparts, for
which the spatial cost and time weight are specified.

A different way to obtain such estimates is to compare the solution
directly at two space-time points. Andrews, Hauer, and
Slegers~\cite{AndrewsHauerSlegers2026} developed this approach for
linear Schr\"odinger equations. They first prove the global Harnack
inequality by a multi-point maximum principle and then recover the
differential inequality by letting the two points approach one another.
Their work suggests that the same order of argument may be useful
when differentiating the evolution equation is not available.
The spatial part of the comparison is closely related to the
modulus-of-continuity method of Andrews and
Clutterbuck~\cite{AndrewsClutterbuck2009,AndrewsClutterbuck2013}.
Its viscosity and weighted-manifold versions were developed in
\cite{LiWang2017,Li2021,LiTuWang2024, JeffresLi2025}, among others.

The purpose of this paper is to establish such direct comparisons for
several classes of nonlinear viscosity solutions. We treat positively
homogeneous uniformly elliptic operators, normalized weighted
$p$-Laplacians, and homogeneous isotropic operators that include the
ordinary weighted $p$-Laplacian. The logarithm is appropriate for
degree-one diffusion, while a power of the solution gives the pressure
for the other homogeneities. The main point of the proofs is to choose
the space-time cost and the weights in the endpoint variations so
that their second-order contributions agree. The maximum principle for semicontinuous
functions~\cite{CIL} then replaces differentiation of the solution. This gives
the first-order differential inequality in the viscosity sense as a
consequence of the global comparison. Several of the estimates are
already known in particular settings, including for viscosity
solutions; the statements below
specify the solution classes, operators, and geometric assumptions
covered by the present argument.

We begin with homogeneous uniformly parabolic equations of the form
\begin{equation}\label{eq 1.1 fully nonlinear}
 u_t=F(D^2u).
\end{equation}
Here $F\colon\Sym(n)\to\R$ is continuous, positively homogeneous
of degree one, and uniformly elliptic with constants
$0<\lambda\leq\Lambda$. Thus
\begin{equation}\label{intro:fn-homogeneity}
 F(qX)=qF(X),
\end{equation}
for $q\geq0$ and $X\in\Sym(n)$, and
\begin{equation}\label{intro:fn-ellipticity}
 \lambda\tr Y\leq F(X+Y)-F(X)\leq\Lambda\tr Y,
\end{equation}
for $X,Y\in\Sym(n)$ with $Y\geq0$.
No convexity or concavity of $F$ is assumed. On a flat torus we use
a fixed global parallel orthonormal frame, and $d$ denotes its flat
distance. All differential inequalities for logarithms and pressures
below are understood in the first-order viscosity sense: they hold
for every smooth test function touching the relevant function from
below. The second-order solution conventions, including those at
critical gradients, are given in Section~\ref{sec:prelim}.

\begin{theorem}\label{thm: main fully nonlinear}
Let $M=\R^n$ or a flat torus $\T^n$, and let
$F\colon\Sym(n)\to\R$ be continuous and satisfy
\eqref{intro:fn-homogeneity} and \eqref{intro:fn-ellipticity}
with constants $0<\lambda\leq\Lambda$.
Suppose that $u$ is a positive continuous viscosity
solution of \eqref{eq 1.1 fully nonlinear} on $M\times(0,T)$.
If $M=\R^n$, assume that $u$ is bounded and uniformly continuous
on every strip $\R^n\times[a,b]$, for $0<a<b<T$. Then
\begin{equation}\label{eq 1.2 harnack}
 u(x,t)\geq u(y,s)\left(\frac{s}{t}\right)^{\frac{n\Lambda}{2\lambda}}
 \exp\!\left(-\frac{d(x,y)^2}{4\lambda(t-s)}\right)
\end{equation}
for all $x,y\in M$ and $0<s<t<T$. Moreover, $v=\log u$ satisfies
\begin{equation}\label{eq 1.3 differential Harnack}
 v_t-\lambda|Dv|^2+\frac{n\Lambda}{2\lambda t}\geq0
\end{equation}
on $M\times (0,T)$.
\end{theorem}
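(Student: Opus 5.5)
We prove the global inequality \eqref{eq 1.2 harnack} first, by a two-point comparison, and then obtain \eqref{eq 1.3 differential Harnack} from it. Put $v=\log u$, fix $\varepsilon>0$ and $0<a<T$, and consider on pairs $(x,t),(y,s)$ with $a<s<t<T$ the function
\begin{equation*}
Z(x,t,y,s)=v(x,t)-v(y,s)+\frac{n\Lambda}{2\lambda}\log\frac{t-a}{s-a}+\frac{|x-y|^2}{4\lambda(t-s)}+\varepsilon\,\phi,
\end{equation*}
where $\phi$ penalizes the boundary of the region. We take $\phi=\frac{1}{T-t}$ together with a small multiple of $|x|^2$ on $\R^n$. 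The latter is admissible because $u$ is bounded and uniformly continuous on strips; on $\T^n$ no spatial penalty is needed. We claim $Z\ge 0$. Letting $\varepsilon\to0$ and $a\to0$ then gives \eqref{eq 1.2 harnack}. For the time factor, $\frac{t-a}{s-a}\to t/s$, and the sign is favorable since $\frac{t-a}{s-a}\ge \frac ts$.

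Near the diagonal $t\to s^+$ the distance term forces $Z\to+\infty$ unless $x=y$, in which case $Z\ge o(1)$ by continuity. So if $Z$ has a negative infimum, it is attained at an interior point with $s<t$. At that point we apply the maximum principle for semicontinuous functions \cite{CIL} to $v(x,t)$ and $-v(y,s)$. Because $u_t=F(D^2u)$ and $F$ is homogeneous of degree one, $v$ is a viscosity solution of $v_t=F(D^2v+Dv\otimes Dv)$, which is the form we use. Write $\psi=\frac{|x-y|^2}{4\lambda\tau}$ with $\tau=t-s$, and $w=\frac{x-y}{2\lambda\tau}$. Then $D_xv=-w$ and $D_yv=-w$, so the gradients at the two points agree. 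The matrix inequality from the Crandall--Ishii--Lions lemma gives $X\le Y+\frac{1}{\lambda\tau}I$, in the sense $\begin{pmatrix}X&0\\0&-Y\end{pmatrix}\le D^2\psi+\text{small}$, where $D^2_{(x,y)}\psi=\frac{1}{2\lambda\tau}\begin{pmatrix}I&-I\\-I&I\end{pmatrix}$. Testing against vectors $(\xi,\xi)$ yields $X\le Y$. To gain something we also test against $(\xi,-\xi)$, obtaining $X-Y\le \frac{2}{\lambda\tau}$-type bounds. The right move is to combine these through ellipticity, using $F(X+p\otimes p)-F(Y+p\otimes p)\le \Lambda\tr(X-Y)^+ - \lambda\tr(X-Y)^-$.

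The real difficulty is that no weight in the variation makes $X\le Y$ alone produce the $\frac{n\Lambda}{2\lambda t}$ term, since spatially constant variations give only $X\le Y$. I would therefore replace the parallel pairing by the scaled variation $\xi\mapsto(\sqrt{\theta}\xi,\ldots)$ used in Aronson--B\'enilan-type two-point arguments. Concretely, I would evaluate the quadratic form on $(\alpha\xi,\beta\xi)$ with $\alpha\ne\beta$ chosen according to $t$ and $s$; morally, the pairing should reflect the heat-kernel scaling $x\sim\sqrt t$. This gives $\alpha^2X-\beta^2Y\le\frac{(\alpha-\beta)^2}{2\lambda\tau}I$. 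Homogeneity of $F$ enters here: $F(\alpha^2X+\alpha^2 p\otimes p)=\alpha^2F(X+p\otimes p)$.

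Combine this with the time derivatives, $v_t=-\partial_t(\ldots)$ and $v_s=\partial_s(\ldots)$, which involve $\frac{n\Lambda}{2\lambda(t-a)}$, $\frac{n\Lambda}{2\lambda(s-a)}$ and $\pm\frac{|x-y|^2}{4\lambda\tau^2}=\pm\lambda|w|^2$. Uniform ellipticity bounds the trace term by $\Lambda n\frac{(\alpha-\beta)^2}{2\lambda\tau}$. Choosing $\alpha^2=t-a$ and $\beta^2=s-a$ makes the $\lambda|w|^2$ cross terms and the trace term cancel exactly against the logarithmic time weights, using $(\sqrt{t-a}-\sqrt{s-a})^2\le \tau$-type identities. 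This contradicts the strict inequality supplied by the $\varepsilon\phi$ terms. Verifying that this exact cancellation holds with $\frac{n\Lambda}{2\lambda}$, and not with some other constant, is the main obstacle. I would check it first in the linear case $F=\lambda\tr$ and then carry it over using only the two ellipticity inequalities, since the argument must not depend on convexity of $F$.

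Finally, \eqref{eq 1.3 differential Harnack} follows by the Andrews--Hauer--Slegers limiting procedure. Let $\varphi$ be smooth and touch $v$ from below at $(x_0,t_0)$. Apply \eqref{eq 1.2 harnack} with $(y,s)=(x_0,t_0)$, $x=x_0+h\,\eta$ and $t=t_0+h$, which gives $\varphi(x,t)-\varphi(x_0,t_0)\ge v(x,t)-v(x_0,t_0)\ge -\frac{n\Lambda}{2\lambda}\log\frac{t}{t_0}-\frac{h|\eta|^2}{4\lambda}$. Divide by $h$ and let $h\to0$ to get $\varphi_t+D\varphi\cdot\eta\ge -\frac{n\Lambda}{2\lambda t_0}-\frac{|\eta|^2}{4\lambda}$. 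Optimizing with $\eta=-2\lambda D\varphi$ yields $\varphi_t-\lambda|D\varphi|^2+\frac{n\Lambda}{2\lambda t_0}\ge0$.
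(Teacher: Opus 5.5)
Your overall plan matches the paper's: log transform, a two-point comparison at independent times with the cost $\frac{|x-y|^2}{4\lambda(t-s)}$, unequal weights in the pairing, and homogeneity to rescale each endpoint inequality. However, the step you flag as the main obstacle fails with your choice of weights.

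\textbf{The cancellation fails for $\alpha^2=t-a$, $\beta^2=s-a$.} Write $C=\frac{n\Lambda}{2\lambda}$ and $\tau=t-s$, and use the terminal penalty $\frac{\varepsilon}{b-t}$. At a minimum of $Z$, the later point carries a closed subjet $X$ and the earlier point a closed superjet $Y$, and the correct orientation is $\begin{pmatrix}-X&0\\0&Y\end{pmatrix}\le D^2\psi+\rho I$. Pairing with $(\alpha\xi,\beta\xi)$ therefore gives a lower bound, $\alpha^2X-\beta^2Y\ge-\frac{(\alpha-\beta)^2}{2\lambda\tau}I-\rho(\alpha^2+\beta^2)I$, and ellipticity must enter through $\mathcal M^-$. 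Your bounds $X\le Y$ and $F(X+p\otimes p)-F(Y+p\otimes p)\le\Lambda\tr(X-Y)^+-\lambda\tr(X-Y)^-$ point the wrong way; the endpoint inequalities $-\psi_t\ge F(X+p\otimes p)$ and $\psi_s\le F(Y+p\otimes p)$ already bound this difference from above.

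Multiply the endpoint inequalities by $\alpha^2\ge\beta^2$ and use $\mathcal M^-(\alpha^2X-\beta^2Y+(\alpha^2-\beta^2)p\otimes p)\ge\mathcal M^-(\alpha^2X-\beta^2Y)+\lambda(\alpha^2-\beta^2)|p|^2$. The $\lambda|p|^2$ terms cancel, and what remains is
\[
 \frac{\varepsilon\alpha^2}{(b-t)^2}
 \le C\Bigl(\frac{(\alpha-\beta)^2}{\tau}-\frac{\alpha^2}{t-a}+\frac{\beta^2}{s-a}\Bigr)
 +n\Lambda\rho(\alpha^2+\beta^2).
\]
With your choice $\alpha^2=t-a$, $\beta^2=s-a$, the two logarithmic contributions both equal $C$ and cancel each other. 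The bracket is then $\frac{(\sqrt{t-a}-\sqrt{s-a})^2}{\tau}>0$, nothing absorbs the trace error, and $(\sqrt{t-a}-\sqrt{s-a})^2\le\tau$ does not help.

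More generally, for $\alpha=(t-a)^{k/2}$ one gets $\tau\cdot(\text{bracket})=(t-a)^{k-1}(s-a)+(t-a)(s-a)^{k-1}-2\bigl((t-a)(s-a)\bigr)^{k/2}\ge0$ by AM--GM, with equality only for $k=2$. The missing idea is the linear pairing $\alpha=t-a$, $\beta=s-a$. Multiply the endpoint inequalities by $(t-a)^2$ and $(s-a)^2$, and contract against $((t-a)A^{1/2}e_i,(s-a)A^{1/2}e_i)$ with $A$ a Pucci minimizer. Then the time terms give exactly $-C\tau$, and the trace error is at least $-\frac{\tau}{2\lambda}\tr A\ge-C\tau$. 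The variations grow linearly in time, not like $\sqrt t$.

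\textbf{Localization on $\R^n$ and on $\T^n$.} Boundedness and uniform continuity of $u$ do not transfer to $v=\log u$. For the Gaussian heat kernel, $\log u\to-\infty$ and $D\log u$ is unbounded. Yet your localization near $t=s$ and the existence of a minimum rely on exactly these properties of $v$. The paper runs the argument for $\log(u+\zeta)$ and lets $\zeta\downarrow0$ at the end. With the spatial penalty $\eta(|x|^2+|y|^2)$, the endpoint gradients become $p-2\eta x$ and $p+2\eta y$, so they no longer agree as you assert. The resulting errors must be shown to vanish: $p$ stays bounded, $\eta|x|\to0$ and $\eta^2|x|^2\to0$. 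One sends $\rho\to0$ and then $\eta\to0$ with $\varepsilon$ fixed, keeping $\eta$ separate from $\varepsilon$. On $\T^n$ you also need a smooth upper support for $d^2$, such as $|\widetilde x-\widetilde y+k_0|^2$ from a lattice lift. In both cases use a terminal time $b<T$ rather than $T$.

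\textbf{Direction error in the differential inequality.} Your derivation of \eqref{eq 1.3 differential Harnack} puts the contact at the earlier endpoint. If $\varphi\le v$ with equality at $(x_0,t_0)$, then $\varphi(x,t)-\varphi(x_0,t_0)\le v(x,t)-v(x_0,t_0)$, the reverse of what you use, so your computation only covers tests touching from above. Put the contact at the later endpoint instead: take $(x,t)=(x_0,t_0)$, $s=t_0-h$ and $y=x_0+2\lambda hD\varphi(x_0,t_0)$. This gives $0\le\varphi(x_0,t_0)-\varphi(y,s)+C\log\frac{t_0}{s}+\lambda h|D\varphi|^2$, and dividing by $h$ yields the inequality.
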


When $F(X)=\tr X$ and $\lambda=\Lambda=1$, this recovers
the Li-Yau Harnack inequality \eqref{intro:classical-harnack}
in the stated Euclidean solution class and on flat tori. For a general
$F$, \eqref{eq 1.2 harnack} gives an
explicit estimate in terms of the ellipticity constants, but we do not
claim that its time exponent is optimal for each operator. The
operator-dependent self-similar exponents studied by Armstrong and
Trokhimtchouk~\cite{ArmstrongTrokhimtchouk2010} provide a related
perspective on the role of homogeneity. The proof of
Theorem~\ref{thm: main fully nonlinear} uses a Pucci comparison
between the two scaled endpoint Hessians, so it does not require
a linearization of $F$.

We also extend these estimates to closed, connected Riemannian
manifolds for orthogonally invariant operators. The same bounds hold
under nonnegative sectional curvature, while a negative sectional
curvature lower bound leads to a modified time factor and a
corresponding differential inequality. The full statements and proofs
are given in Section~\ref{sec:harnack-manifolds}; see
Theorems~\ref{intro:thm-manifold-nonnegative}
and~\ref{intro:thm-manifold-negative}.

We next consider a closed, connected smooth metric measure space
$(M^n,g,e^{-f}d\mu_g)$, where $f$ is smooth. For $n<N<\infty$, write
\[
 \Ric_f^N=\Ric+\nabla^2f-\frac{df\otimes df}{N-n}.
\]
When $N=n$, we require $f$ to be constant and set $\Ric_f^n=\Ric$.
This convention will be used in every weighted result below. The
weighted Laplacian and normalized infinity Laplacian are given by
\[
 \Delta_f u=\Delta u-\langle\nabla f,\nabla u\rangle,
\]
and
\[
 \Delta_\infty^{\mathrm N}u
 =\frac{\nabla^2u(\nabla u,\nabla u)}{|\nabla u|^2},
\]
respectively, where the second formula applies at noncritical points. We use
\[
 \Delta_{p,f}^{\mathrm N}u
 =\Delta_f u+(p-2)\Delta_\infty^{\mathrm N}u,
\]
for $1<p<\infty$.
The superscript $\mathrm N$ denotes normalization, independently
of the dimension parameter $N$.\footnote{Our normalization does not include
a factor $\frac{1}{p}$; this fixes the time scale in the estimates.}

For the unweighted normalized flow, Banerjee, Garofalo, and
Mahmoudian~\cite{BanerjeeGarofaloMahmoudian2026} recently proved a
sharp Li-Yau estimate and its global Harnack consequence for viscosity
solutions on closed manifolds with nonnegative Ricci curvature.
Their theorem assumes smooth positive initial data, and their proof proceeds
through uniformly parabolic approximations and differential
estimates. Our next result recovers their constants by a direct
two-time argument when $f$ is constant and $N=n$, permits continuous
positive initial data, and uses finite-dimensional weighted Ricci curvature.

\begin{theorem}\label{thm:intro-normalized-p}
Assume that $\Ric_f^N\geq0$ for $n\leq N<\infty$. Let
$1<p<\infty$, and let $u\in C(M\times[0,T))$ be a positive
viscosity solution of $u_t=\Delta_{p,f}^{\mathrm N}u$. Then
\begin{equation}\label{intro:normalized-p-harnack}
 u(x,t)\geq u(y,s)
 \left(\frac{s}{t}\right)^{\frac{N+p-2}{2(p-1)}}
 \exp\!\left(-\frac{d(x,y)^2}{4(p-1)(t-s)}\right)
\end{equation}
for all $x,y\in M$ and $0<s<t<T$. Moreover, $v=\log u$ satisfies
\begin{equation}\label{intro:normalized-p-differential}
 v_t-(p-1)|\nabla v|^2+\frac{N+p-2}{2(p-1)t}\geq0
\end{equation}
on $M\times(0,T)$.
\end{theorem}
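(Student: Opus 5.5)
We prove the global Harnack inequality \eqref{intro:normalized-p-harnack} first, by a two-point maximum principle, and then obtain \eqref{intro:normalized-p-differential} by letting the two points merge.

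\textbf{Setup.} Put $v=\log u$, write $\alpha=\frac{N+p-2}{2(p-1)}$, and for $0<s<t<T$ define
\[
Z(x,t;y,s)=v(x,t)-v(y,s)+\alpha\log\frac{t}{s}+\frac{d(x,y)^2}{4(p-1)(t-s)}.
\]
We must show $Z\ge0$. The idea is to perturb $Z$ and show it cannot attain a negative interior minimum. Replace $\alpha$ by $\alpha+\varepsilon$ and add a barrier such as $\varepsilon/(t-s)$ together with terms penalizing $s\to0$. Continuity of $u$ on $M\times[0,T)$ and compactness of $M$ then force any negative minimum of the perturbed $Z$ to occur at an interior configuration with $s<t$. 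As $t-s\to0$ the distance term dominates unless $x\to y$, and then $Z\to0$; so the limit case $s=t$ is harmless as well.

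\textbf{Two-point argument at a minimum.} Suppose the minimum is attained at $(x_0,t_0;y_0,s_0)$. There are two geometric cases.
\begin{itemize}
\item If $x_0=y_0$, we argue directly by one-variable comparison in time.
\item If $x_0\ne y_0$, we may assume $d$ is smooth near $(x_0,y_0)$ (otherwise we use Calabi's trick with a supporting function). We then apply the parabolic maximum principle for semicontinuous functions \cite{CIL} to $v(x,t)$ and $-v(y,s)$ at independent times, which is legitimate since the time variables are independent. This yields first-order data $\nabla v(x_0)=-\frac{d\,\nabla_x d}{2(p-1)(t_0-s_0)}$ and the same gradient at $y_0$ under parallel transport. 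We also obtain time derivatives and Hessian bounds $X,Y$ whose sum with the Hessian of the cost is nonnegative when tested against variations.
\end{itemize}
The equation for $v$ reads $v_t=\Delta_f v+(p-2)\Delta^{\mathrm N}_\infty v+(p-1)|\nabla v|^2$, and it has exactly this form in viscosity sense, including at critical points via semicontinuous envelopes. Using it, subtracting the inequalities at the two points gives
\[
0\ge\alpha\Big(\frac1{t_0}-\frac1{s_0}\Big)\cdot(\text{sign})+\tfrac{d^2}{4(p-1)(t-s)^2}\,(\ldots)+\text{trace terms}.
\]

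\textbf{Choosing the variations (the main obstacle).} The crux is to estimate $\tr X-\tr Y$, plus the weighted and $\infty$-Laplacian parts, by testing the matrix inequality on well-chosen variations. Along the unit tangent $e_n=\gamma'$ of the minimizing geodesic, we use the variation $(\sqrt{t}\,e_n,\sqrt{s}\,e_n)$, i.e.\ endpoints scaled by $\sqrt{t_0}$ and $\sqrt{s_0}$. This direction carries the gradient, so it receives weight $p-1$ from $\Delta^{\mathrm N}_\infty$, which produces the $p-2$ in $N+p-2$. For orthogonal parallel fields $e_i$ we use variations $(\sqrt t\,\eta(\cdot),\sqrt s\,\eta(\cdot))$ interpolated linearly along $\gamma$. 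The second variation of $d^2$ then contributes a Ricci term with the sign fixed by $\Ric\ge0$.

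The drift term $\langle\nabla f,\nabla v\rangle$ at the two endpoints is handled by the standard weighted trick. We write the difference as $\int_\gamma\nabla^2 f(\gamma',\gamma')$ along $\gamma$, and absorb $\frac{(df(\gamma'))^2}{N-n}$ through Cauchy–Schwarz against the extra $N-n$ dimensions; this produces the constant $N$ in place of $n$. Given the scaling in the first bullet, the bookkeeping should give exactly
\[
\tfrac{N+p-2}{2}\Big(\tfrac1{s_0}-\tfrac1{t_0}\Big)\ \text{(per unit of $\alpha(p-1)$)},
\]
which matches the $\alpha$ term and contradicts strict minimality once $\varepsilon>0$. We expect the main difficulty to be this matching of second-order contributions when $p\ne2$ and the gradient is a singular direction. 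In that situation we would first try the weighted geodesic variation described above with weights $\sqrt{t_0}$ and $\sqrt{s_0}$. If the linear interpolation fails to close the estimate, we would next try weights chosen to optimize $\int_0^1(|\eta'|^2-\Ric_f^N(\eta,\eta))$.

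\textbf{Differential inequality.} Given the Harnack inequality, let $\phi$ touch $v$ from below at $(x,t)$. Set $y=\exp_x(-h\,w)$ and $s=t-h$, and compare $\phi(x,t)\ge\phi(y,s)-\alpha\log\frac{t}{s}-\frac{h|w|^2}{4(p-1)}$ to first order in $h$. This gives $\phi_t+\langle\nabla\phi,w\rangle+\frac{\alpha}{t}-\frac{|w|^2}{4(p-1)}\ge0$, hence $\phi_t+\frac{\alpha}{t}\ge\langle\nabla\phi,-w\rangle+\frac{|w|^2}{4(p-1)}$ after the appropriate sign. Optimizing at $w=2(p-1)\nabla\phi$ gives $\phi_t-(p-1)|\nabla\phi|^2+\frac{\alpha}{t}\ge0$, which is \eqref{intro:normalized-p-differential}.
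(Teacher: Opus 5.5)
\textbf{The endpoint weights $(\sqrt{t_0},\sqrt{s_0})$ cannot close the argument.} This is exactly the step you flag as the crux.

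After the gradient terms cancel (both $-\psi_t$ and $\psi_s$ contain exactly $(p-1)|\nabla v|^2$), the endpoint viscosity inequalities reduce to $\mathcal LX+\text{drift}\le-\alpha/t_0$ and $\mathcal LY+\text{drift}\ge-\alpha/s_0$. Here $\mathcal LX=(p-1)X_{nn}+\sum_{i<n}X_{ii}$ in the parallel frame. Contracting the matrix inequality against $(\sqrt{t_0}e_i,\sqrt{s_0}e_i)$ weights these by $t_0$ and $s_0$, so the $\alpha$-terms cancel identically. Take a flat torus with $f$ constant and $N=n$, where the theorem applies and the cost Hessian is exact. The viscosity inequalities give $t_0\mathcal LX-s_0\mathcal LY\le0$, while the matrix inequality only gives
\[
t_0\mathcal LX-s_0\mathcal LY\geq-\frac{(\sqrt{t_0}-\sqrt{s_0})^2}{2(t_0-s_0)}\Bigl(1+\frac{n-1}{p-1}\Bigr).
\]
These two bounds are compatible, so no contradiction arises for any $\alpha$.

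More generally, with endpoint weights $(a,b)$ you need
\[
\alpha\Bigl(\frac{a^2}{t}-\frac{b^2}{s}\Bigr)\geq\alpha_n\frac{(a-b)^2}{t-s},\qquad \alpha_n=\frac{n+p-2}{2(p-1)}.
\]
The determinant of this quadratic form in $(a,b)$ is a positive multiple of $\alpha_n-\alpha$. So at the sharp exponent the only admissible ray is $b/a=s/t$.

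The paper accordingly uses, along the geodesic $\sigma$ parametrized by time on $[s_0,t_0]$, the variations $\exp_{\sigma(\tau)}(r\tau e_i(\tau))$. These have endpoint vectors $t_0e_i(t_0)$ and $s_0e_i(s_0)$, and the endpoint inequalities are multiplied by $t_0^2$ and $s_0^2$. Both sides then equal $-\eta_N(t_0-s_0)$ up to the penalty. The same linear weight is what makes the drift work: the boundary term $[\tau^2(f\circ\sigma)']_{s_0}^{t_0}$ combines with $\tau^2\Ric$ through the square $\frac{(\tau(f\circ\sigma)'+N-n)^2}{N-n}$ to give exactly $-(N-n)(t_0-s_0)$.

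Your fallback of optimizing the interior profile cannot help. The obstruction already occurs in flat space and in the purely longitudinal direction, where the profile plays no role. Likewise, the bookkeeping $\alpha(1/s_0-1/t_0)$ you anticipate corresponds to unit weights. For those the longitudinal second variation vanishes, and the inequalities are again consistent.

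\textbf{Smaller points that also need repair.}

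\emph{The diagonal case.} When $x_0=y_0$, the frozen test built from the raw cost $\frac{d^2}{4(p-1)(t-s)}$ has zero gradient but Hessian $-\frac{1}{2(p-1)(t_0-s_0)}g$. Its lower envelope is therefore $-\frac{\alpha_n}{t_0-s_0}$, and a one-point time comparison gives no contradiction. The paper replaces $\frac12d^2$ by $h_\varepsilon(d)$, where $h_\varepsilon''(0)=0$. The frozen test then has vanishing Hessian, and its negative time derivative contradicts the supersolution inequality immediately. The kernel error $2h_\varepsilon-(h_\varepsilon')^2=O(\varepsilon^2 d)$ is absorbed by the terminal penalty.

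\emph{The barrier.} Your barrier $\varepsilon/(t-s)$ enters the correctly weighted difference as $+\varepsilon\frac{t_0+s_0}{t_0-s_0}$, which works against you, and it is unnecessary. What you do need is a cutoff $t<b<T$, for instance via $\varepsilon/(b-t)$, since $v$ need not be uniformly continuous up to $T$.

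\emph{The differential inequality.} With $y=\exp_x(-hw)$, the expansion gives $\phi_t+\langle\nabla\phi,w\rangle+\frac{|w|^2}{4(p-1)}+\frac{\alpha}{t}\ge0$. This is optimized at $w=-2(p-1)\nabla\phi$. Your conclusion is correct, but the displayed signs are not.
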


The longitudinal and transverse directions contribute separately to
the exponent in \eqref{intro:normalized-p-harnack}. This is useful
for the normalized infinity heat equation, where the operator has
only a longitudinal second derivative. Juutinen and
Kawohl~\cite{JuutinenKawohl2006} established existence, regularity,
and a local Harnack inequality for this equation in Euclidean space.
The following result gives an explicit global comparison on closed
manifolds without a curvature assumption.

\begin{theorem}\label{thm:intro-infinity}
Let $(M^n,g)$ be any closed, connected Riemannian manifold. If
$u\in C(M\times[0,T))$ is a positive viscosity solution of
$u_t=\Delta_\infty^{\mathrm N}u$, then
\begin{equation}\label{intro:infinity-harnack}
 u(x,t)\geq u(y,s)\left(\frac{s}{t}\right)^{\frac{1}{2}}
 \exp\!\left(-\frac{d(x,y)^2}{4(t-s)}\right)
\end{equation}
for all $x,y\in M$ and $0<s<t<T$, and $v=\log u$ satisfies
\begin{equation}\label{intro:infinity-differential}
 v_t-|\nabla v|^2+\frac{1}{2t}\geq0.
\end{equation}
\end{theorem}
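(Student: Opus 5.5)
We prove \eqref{intro:infinity-harnack} by a two-point comparison in space-time, following the scheme sketched in the introduction, and then obtain \eqref{intro:infinity-differential} by letting the two points coalesce. Set $v=\log u$, which is continuous on the compact set $M\times[s_0,T']$ for any $0<s_0<T'<T$. Formally $v$ solves
\[
v_t=\Delta_\infty^{\mathrm N}v+|\nabla v|^2 ,
\]
interpreted through the usual semicontinuous envelopes of the normalized operator at critical gradients. For $\varepsilon>0$ we consider
\[
Z(x,t,y,s)=v(x,t)-v(y,s)+\frac{d(x,y)^2}{4(t-s)}+\frac12\log\frac{t}{s}+\varepsilon\,\psi(t,s),
\]
on $\{(x,t,y,s): 0<s<t<T'\}$. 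The penalty $\psi$ is chosen to blow up as $t-s\downarrow0$ off the diagonal $x=y$ (the distance term already does this), to be increasing in $t$ and decreasing in $s$, and to force the infimum away from $s\downarrow0$; for instance, take $\psi=\frac{t-s}{T'-t}+\frac1s$, adjusted as needed. The goal is to show $Z\ge0$; letting $\varepsilon\to0$ then gives \eqref{intro:infinity-harnack}. Suppose instead that the infimum is negative. By the boundary behaviour it is attained at an interior point with $x\ne y$, or with $x=y$ and $t>s$. In the latter case we compare with the trivial spatially constant test, which is handled by the time-monotonicity terms.

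At a negative minimum with $x\neq y$, we first reduce to the case where $d$ is smooth, using the standard cut-locus perturbation argument by support functions. We then apply the parabolic maximum principle for semicontinuous functions \cite{CIL} to obtain jets $(b_1,\nabla_x\phi,X)\in\bar{\mathcal P}^{2,-}v(x,t)$ and $(b_2,-\nabla_y\phi,Y)\in\bar{\mathcal P}^{2,+}v(y,s)$. Here $\phi=-\frac{d^2}{4(t-s)}-\frac12\log\frac ts-\varepsilon\psi$, so that $v(x,t)-v(y,s)\ge\phi$ near the minimum. The gradients at both points equal $p=\frac{d}{2(t-s)}\gamma'$, with $\gamma$ the minimal geodesic from $y$ to $x$; in particular they are nonzero, so the normalized operator is classical at both points. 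The key point is the choice of the second-order variation. Since only the longitudinal second derivative enters, we need the Hessian bound only in the direction $\gamma'$. We take the variation $\sigma(r)=(\gamma\text{-translate})$ moving $x$ by $r\gamma'(d)$ in time-weighted form, that is, with endpoints displaced by $\sqrt{t}\,r$ at $x$ and $\sqrt{s}\,r$ at $y$ along the geodesic. Along a geodesic this variation is exactly longitudinal, so no curvature term appears; this is why no curvature hypothesis is needed. This yields
\[
t\,X(\gamma',\gamma')-s\,Y(\gamma',\gamma')\ \ge\ -\frac{(\sqrt t-\sqrt s)^2}{2(t-s)}.
\]

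Next we insert the equations: $b_1\le X(\gamma',\gamma')+|p|^2$ from the supersolution side and $b_2\ge Y(\gamma',\gamma')+|p|^2$. The time derivatives of $\phi$ give $b_1-b_2=-\phi_t+\phi_s$ expressed through $\frac{d^2}{4(t-s)^2}=|p|^2$ and $\frac{1}{2t},\frac1{2s}$. To compare, we multiply the first relation by $t$ and the second by $s$, mirroring the time weighting. This uses the identities $\partial_t\phi=\frac{d^2}{4(t-s)^2}-\frac1{2t}-\varepsilon\psi_t$ and $\partial_s\phi=-\frac{d^2}{4(t-s)^2}+\frac1{2s}-\varepsilon\psi_s$, together with the Hessian bound above. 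After this substitution, the $|p|^2$ terms and the $\tfrac12$ constants should cancel exactly, precisely because the exponent is $\frac12$ (one longitudinal direction). What remains is a strict inequality in the wrong direction coming from the $\varepsilon\psi$ terms, which gives the contradiction.

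The main obstacle is this algebra: choosing the endpoint weights ($\sqrt t,\sqrt s$, or perhaps $t,s$) and the scaling with which the two equations are combined so that the second-order contributions match. Our first attempt is the weighting that makes the one-dimensional heat kernel $t^{-1/2}e^{-d^2/4t}$ an exact equality case, checking it against that explicit solution. For \eqref{intro:infinity-differential}, given a smooth $\varphi$ touching $v$ from below at $(x_0,t_0)$, we use \eqref{intro:infinity-harnack} with $(x,t)=(x_0,t_0)$ and $(y,s)=(x_0-h\xi,\,t_0-h)$ along an exponential curve, and optimize over $\xi$ as $h\to0$ to get $\varphi_t-|\nabla\varphi|^2+\frac1{2t_0}\ge0$.
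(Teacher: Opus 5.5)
You follow the paper's route: the theorem is the case $A=1$, $B=0$ of Proposition~\ref{intro:prop-degree-one}, a two-point comparison for $v=\log u$ using only the longitudinal variation, hence no curvature. However, the step you flag as the main obstacle fails with the weights you chose.

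First, your viscosity inequalities are reversed. At the subjet at the later point, the supersolution property gives $b_1\geq X(\gamma',\gamma')+|p|^2$. At the superjet at the earlier point, the subsolution property gives $b_2\leq Y(\gamma',\gamma')+|p|^2$.

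With the correct signs, $b_1=|p|^2-\frac{1}{2t}-\varepsilon\psi_t$ and $b_2=|p|^2-\frac{1}{2s}+\varepsilon\psi_s$. The $|p|^2$ terms therefore cancel at each endpoint separately. Displacements $\sqrt t,\sqrt s$ with multipliers $t,s$ then yield only
\[
 -\frac{(\sqrt t-\sqrt s)^2}{2(t-s)}\leq tX(\gamma',\gamma')-sY(\gamma',\gamma')\leq-\varepsilon(t\psi_t+s\psi_s).
\]
The defect $\frac{\sqrt t-\sqrt s}{2(\sqrt t+\sqrt s)}$ does not depend on $\varepsilon$, so there is no contradiction. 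Notice that the multipliers $t,s$ cancel $\frac{\eta}{t}$ against $\frac{\eta}{s}$ for every exponent $\eta$, so the value $\frac12$ never enters your computation.

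For longitudinal displacements $c_t,c_s$ and multipliers $c_t^2,c_s^2$, the defect is
\[
 \frac{(c_t-c_s)^2}{2(t-s)}-\frac{c_t^2}{2t}+\frac{c_s^2}{2s}
 =\frac{ts}{2(t-s)}\left(\frac{c_t}{t}-\frac{c_s}{s}\right)^2.
\]
It vanishes only when the displacements are proportional to the times. The paper uses the variation $\exp_{\alpha(\tau)}(r\tau e_n(\tau))$ of the geodesic parametrized on $[s,t]$, whose length is $d+r(t-s)$. The cost then contributes $\frac{t-s}{2}$ to $t^2X_{nn}-s^2Y_{nn}$, and the time terms contribute $t^2\cdot\frac{1}{2t}-s^2\cdot\frac{1}{2s}=\frac{t-s}{2}$. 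This is where the exponent $\frac12$ is forced.

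What remains is $-\varepsilon(t^2\psi_t+s^2\psi_s)$, which must be negative. Your summand $\frac1s$ contributes $+\varepsilon$ to it, which is the wrong sign. It is also unnecessary: $v$ is continuous up to $t=0$, so $\frac12\log\frac ts$ excludes $s\downarrow0$ once $t-s$ is bounded below. The paper uses only $\frac{\varepsilon}{b-t}$.

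The diagonal case is also not handled. With the cost $\frac{d^2}{4(t-s)}$, freezing the earlier endpoint at a minimum with $x_0=y_0$ gives a lower test at $(x_0,t_0)$ with zero gradient but spatial Hessian $-\frac{1}{2(t_0-s_0)}g$. This test is not spatially constant. At zero gradient the supersolution condition uses the lower envelope $\lambda_{\min}$, so it only requires $-\frac{1}{2t_0}-\varepsilon\psi_t\geq-\frac{1}{2(t_0-s_0)}$, which holds.

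Using both endpoints does not help when $n\geq2$. The matrix inequality allows $\lambda_{\min}(X)$ close to $-\frac{1}{2(t_0-s_0)}$ and $\lambda_{\max}(Y)$ close to $\frac{1}{2(t_0-s_0)}$ simultaneously, along orthogonal eigenvectors.

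The paper replaces $\frac{d^2}{2}$ by $h_\varepsilon(d)=\int_0^d\frac{w^2}{w+\varepsilon^2}\,dw$, which is $C^2$ with zero gradient and Hessian on the diagonal. Both envelopes then vanish, and the frozen test has $\varphi_t<0$, a contradiction. The price off the diagonal is that the $|p|^2$ terms cancel only up to $2h_\varepsilon-(h_\varepsilon')^2\leq2\varepsilon^2d$. The terminal penalty absorbs this after dividing by $\varepsilon$.

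With these corrections, plus a smooth upper support for distance that realizes the time-weighted variation at cut points, your outline becomes the paper's proof. Your derivation of \eqref{intro:infinity-differential} from the Harnack inequality is fine.
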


The absence of curvature in Theorem~\ref{thm:intro-infinity} follows
from the use of longitudinal variations alone. More generally, for constants $A>0$,
$B\geq0$, and $\gamma>0$, define the operator
\begin{equation}\label{intro:redraft-operator}
 \mathcal Q_\gamma[u]
 =|\nabla u|^{\gamma-1}
   \left((A-B)\Delta_\infty^{\mathrm N}u+B\Delta_f u\right)
\end{equation}
at noncritical points. This includes normalized $p$ diffusion when
$(\gamma,A,B)=(1,p-1,1)$ and ordinary weighted $p$ diffusion when
$(\gamma,A,B)=(p-1,p-1,1)$. The next proposition contains the
normalized $p$ and infinity heat estimates as cases of degree-one
diffusion with arbitrary coefficients.

\begin{proposition}\label{intro:prop-degree-one}
Let $(M^n,g,e^{-f}d\mu_g)$ be closed and connected, and let
$u\in C(M\times[0,T))$ be a positive viscosity solution of
$u_t=\mathcal Q_1[u]$. If $B>0$, assume $\Ric_f^N\geq0$
for $n\leq N<\infty$ and set
\[
 \eta_N=\frac{1}{2}+\frac{(N-1)B}{2A}.
\]
Then
\[
 u(x,t)\geq u(y,s)\left(\frac{s}{t}\right)^{\eta_N}
 \exp\!\left(-\frac{d(x,y)^2}{4A(t-s)}\right),
\]
for all $x,y\in M$ and $0<s<t<T$. Moreover, $v=\log u$ satisfies
\[
 v_t-A|\nabla v|^2+\frac{\eta_N}{t}\geq0
\]
on $M\times(0,T)$.
If $B=0$, these conclusions
hold with $\eta_N=\frac{1}{2}$ on any closed, connected Riemannian
manifold, without a curvature assumption.
\end{proposition}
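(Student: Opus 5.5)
We prove the Harnack inequality first, by a two-point maximum principle, and then read off the differential inequality. Set $v=\log u$. For $\varepsilon>0$ consider
\[
 Z(x,t,y,s)=v(x,t)-v(y,s)+\eta_N\log\frac{t}{s}+\frac{d(x,y)^2}{4A(t-s)}+\varepsilon\,\psi(t,s),
\]
on $\{0<s<t<T'\}$ with $T'<T$. Here $\psi$ is a barrier such as $\frac{1}{T'-t}+\frac{1}{t-s}$, or a variant that penalizes $t\to T'$, $t-s\to0$ and $s\to0$. The interior of $M$ causes no boundary issues because $M$ is closed. We must show $Z\ge0$ and then let $\varepsilon\to0$.

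On the diagonal $t\downarrow s$, the term $d^2/(4A(t-s))$ blows up unless $x=y$. When $x=y$, $Z\to0$ by continuity of $u$, and the barrier then forces a strict interior negative minimum if $Z$ is ever negative. As $s\to0$, the factor $\eta_N\log(t/s)\to+\infty$ while $v$ stays bounded, since $u\in C(M\times[0,T))$ is positive. So if $\inf Z<0$, it is attained at an interior point $(x_0,t_0,y_0,s_0)$.

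If $x_0\ne y_0$, we may assume there is a minimizing geodesic $\gamma$ of length $L$ from $y_0$ to $x_0$; the cut locus is handled by the standard Calabi trick or by using support functions for $d$. We apply the parabolic maximum principle for semicontinuous functions \cite{CIL} on $M\times M$. This gives elements of the parabolic second-order jets: $(\tau_1,p_1,X_1)\in\overline{\mathcal P}^{2,-}v(x_0,t_0)$ and $(\tau_2,p_2,X_2)\in\overline{\mathcal P}^{2,+}v(y_0,s_0)$. The gradients are $p_1=-\frac{L}{2A(t_0-s_0)}\gamma'$ at $x_0$ and the matching vector at $y_0$, so $|p_1|=|p_2|=\frac{L}{2A(t_0-s_0)}\ne0$. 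Critical points are therefore avoided and $\mathcal Q_1$ is defined at both points. The time components satisfy $\tau_1-\tau_2=-\partial_t-\partial_s$ applied to the cost, up to $\varepsilon$ terms.

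The equation for $v$ is $v_t=(A-B)(\Delta^{\mathrm N}_\infty v+|\nabla v|^2)+B(\Delta_f v+|\nabla v|^2)$, which reduces to $v_t=\mathcal Q_1[v]+A|\nabla v|^2$ using homogeneity. The quadratic terms $A|p_i|^2$ are equal at the two points and cancel. It remains to bound $\mathcal Q_1(X_1)-\mathcal Q_1(X_2)$ from below, using the matrix inequality from \cite{CIL} tested against variations of $\gamma$.

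\emph{Longitudinal direction.} The variation $(\gamma',\gamma')$ gives $X_1(e_1,e_1)-X_2(e_1,e_1)\ge0$ because the distance is linear along $\gamma$. A time-weighted variation, with endpoint weights $\sqrt{t_0}$ and $\sqrt{s_0}$, should be combined with the time-scaling in the cost to extract the $\frac12\log(t/s)$ contribution.

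\emph{Transverse directions.} When $B>0$, take parallel fields $E_i$, $i\ge2$, along $\gamma$, with endpoint weights interpolating between $\sqrt{t_0}$ and $\sqrt{s_0}$ linearly in arclength. The second variation of $d^2$ then produces a curvature term $-\int\Ric(\gamma',\gamma')$, plus the index-form term that yields $\frac{(n-1)}{2}\cdot\frac{(\sqrt{t_0}-\sqrt{s_0})^2}{\dots}$-type contributions. Adding the drift term $\langle\nabla f,p\rangle$ at both ends and using the standard weighted argument (Cauchy–Schwarz on $\int(\phi')^2$ against $f'$ along $\gamma$) replaces $n-1$ by $N-1$ under $\Ric_f^N\ge0$.

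Summing the longitudinal and transverse pieces with coefficients $A$ and $B$ should give
\[
 \mathcal Q_1(X_1)t_0-\mathcal Q_1(X_2)s_0\gtrsim-\Big(\tfrac12+\tfrac{(N-1)B}{2A}\Big)(t_0-s_0)\cdot\frac{1}{\cdots}.
\]
This must match exactly the time derivative of $\eta_N\log(t/s)+d^2/(4A(t-s))$, and that matching contradicts minimality once the $\varepsilon$ term supplies strictness. If $x_0=y_0$, a one-sided argument in time alone suffices: the function is then $v(x,t)-v(x,s)+\eta_N\log(t/s)$ plus the barrier, and the cost's blow-up controls the spatial jets.

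The main obstacle is choosing the endpoint weights so that the second-order contributions balance against the time-derivative terms precisely. The longitudinal weight must produce exactly $\frac12$. The transverse weights must produce $\frac{(N-1)B}{2A}$ with only a one-sided curvature error. This has to be done while respecting the non-divergence, gradient-dependent structure, which is linear in $X$ only for the fixed direction $p/|p|$. Because $p_1$ and $p_2$ are parallel along $\gamma$, one fixed frame $\{\gamma',E_i\}$ diagonalizes both operators, so the comparison reduces to scalar inequalities in that frame. When $B=0$ only the longitudinal piece is used, so no curvature enters; this gives the final statement with $\eta_N=\frac12$.

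For the differential inequality, let $(y,s)\to(x,t)$ with $y=\exp_x(-(t-s)\,2A\xi)$ in the Harnack inequality, for a test function $\phi$ touching $v$ from below at $(x,t)$. Expanding to first order gives $\phi_t-2A\langle\nabla\phi,\xi\rangle+A|\xi|^2\cdot\ldots+\eta_N/t\ge0$. Optimizing over $\xi$ with $\xi=\nabla\phi/\ldots$ yields $\phi_t-A|\nabla\phi|^2+\eta_N/t\ge0$.
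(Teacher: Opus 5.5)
Your architecture matches the paper's. You pass to $v=\log u$ and minimize $v(x,t)-v(y,s)+\eta_N\log\frac{t}{s}+\frac{d(x,y)^2}{4A(t-s)}$ plus a terminal penalty. You contract the matrix inequality along parallel frames of a minimizing geodesic, absorb the drift into $\Ric_f^N$ by completing a square, and obtain the differential inequality by letting the points coalesce. But the two steps you leave at ``should'' are exactly where the proof is decided, and as written both fail.

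\emph{Endpoint weights.} Contract against $(\mu e_i(t),\nu e_i(s))$ and write $\mathcal LX=AX_{nn}+B\sum_{i<n}X_{ii}$. At each endpoint the time derivative of the distance cost cancels $A|p|^2$. So on a flat torus with $N=n$ (no drift) the equations give
\[
 \mu^2\mathcal LX-\nu^2\mathcal LY\leq-\eta_N\Bigl(\frac{\mu^2}{t}-\frac{\nu^2}{s}\Bigr)-\frac{\varepsilon\mu^2}{(b-t)^2},
\]
where $b$ is the terminal time. The matrix inequality gives exactly $\mu^2\mathcal LX-\nu^2\mathcal LY\geq-\eta_N\frac{(\mu-\nu)^2}{t-s}$, up to the $\rho$-error, and nothing better. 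A contradiction therefore requires $\bigl(\frac{\mu^2}{t}-\frac{\nu^2}{s}\bigr)(t-s)\geq(\mu-\nu)^2$. This is equivalent to $\bigl(\mu\sqrt{s/t}-\nu\sqrt{t/s}\bigr)^2\leq0$, i.e.\ $\mu/\nu=t/s$.

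With your choice $\mu=\sqrt t$, $\nu=\sqrt s$, the logarithm contributes $t\cdot\frac{\eta_N}{t}-s\cdot\frac{\eta_N}{s}=0$. The positive index term $\eta_N\frac{(\sqrt t-\sqrt s)^2}{t-s}$ is then left unabsorbed, and the inequalities are consistent. The paper parametrizes the geodesic on $[s,t]$ with constant speed and uses $\alpha_i(r,\tau)=\exp_{\alpha(\tau)}(r\tau e_i(\tau))$. This means endpoint vectors $te_i(t)$ and $se_i(s)$, and the endpoint inequalities are multiplied by $t^2$ and $s^2$. Both bounds are then $-\eta_N(t-s)$, and the penalty decides.

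\emph{The diagonal.} At $x_0=y_0$ the difference $t_0-s_0$ is bounded below, so nothing blows up. Instead the test gradient vanishes, where $\mathcal Q_1$ is discontinuous and only the envelopes $\mathcal Q_{1,*}$, $\mathcal Q_1^*$ are available. Freezing the earlier endpoint with your unregularized cost gives a lower test with spatial Hessian $-\frac{1}{2A(t_0-s_0)}g$. The supersolution inequality then only demands $-\frac{\eta_N}{t_0}-\frac{\varepsilon}{(b-t_0)^2}\geq-\frac{A+(n-1)B}{2A(t_0-s_0)}$, which need not fail. Freezing the later endpoint yields only $-\frac{\eta_N}{s_0}\leq\frac{A+(n-1)B}{2A(t_0-s_0)}$. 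When $A\neq B$ the two-point version also fails, because $\mathcal Q_{1,*}(X)$ and $\mathcal Q_1^*(Y)$ are extremized in unrelated directions.

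The paper replaces $\frac{d^2}{2}$ by $h_\varepsilon(d)$, with $h_\varepsilon(r)=\int_0^r\frac{w^2}{w+\varepsilon^2}\,dw$, which satisfies $h_\varepsilon(0)=h_\varepsilon'(0)=h_\varepsilon''(0)=0$. The frozen test then has zero gradient and zero Hessian, both envelopes vanish, and $\varphi_t=-\frac{\eta_N}{t_0}-\frac{\varepsilon}{(b-t_0)^2}<0$ gives the contradiction. Off the diagonal the price is $0\leq2h_\varepsilon-(h_\varepsilon')^2\leq2\varepsilon^2d$. This $O(\varepsilon^2)$ error loses to the $O(\varepsilon)$ terminal penalty after division by $\varepsilon$, which is why $s$ and $t-s$ must be bounded below uniformly in $\varepsilon$.

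Finally, drop the barrier $\frac{\varepsilon}{t-s}$. It adds $\varepsilon\frac{t^2-s^2}{(t-s)^2}>0$ to the upper bound, which is the wrong sign. The distance cost and uniform continuity of $v$ already keep $t-s$ away from zero.
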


We prove Proposition~\ref{intro:prop-degree-one} for arbitrary
$A>0$ and $B\geq0$; the normalized $p$ and infinity estimates
then follow by substitution.
For $\gamma\ne1$, the pressure variable changes both the time
weight and the power of the distance. The isotropic class
\eqref{intro:redraft-operator} is part of the setting studied in
\cite{ParviainenVazquez2020,LiTuWang2024,KurkinenSiltakoski2025}.
The following theorem gives a global pressure comparison for this
class on weighted manifolds.

\begin{theorem}\label{intro:homogeneous-pressure}
Let $(M^n,g,e^{-f}d\mu_g)$ be a closed, connected smooth metric
measure space satisfying $\Ric_f^N\geq0$, where $n\leq N<\infty$
and $f$ is constant when $N=n$. Let
$u\in C(M\times[0,T))$ be a positive viscosity solution of
$u_t=\mathcal Q_\gamma[u]$, where $A>0$, $B\geq0$,
$\gamma>0$, and $\gamma\ne1$. Assume that
$2A+(\gamma-1)(N-1)B>0$, and set
\[
 \theta=\frac{\gamma-1}{\gamma},\quad
 v=\frac{u^\theta}{\theta},\quad
 \kappa=\frac{A+\gamma(N-1)B}
 {\gamma\bigl(2A+(\gamma-1)(N-1)B\bigr)}.
\]
Define
\[
 \Lambda(s,t)=\left(
 \frac{A(\gamma+1)}{\gamma}
 \int_s^t\tau^{-(\gamma-1)\kappa}\,d\tau
 \right)^{-\frac{1}{\gamma}}.
\]
Then
\begin{equation}\label{intro:general-pressure-harnack}
 t^{\theta\kappa}v(x,t)-s^{\theta\kappa}v(y,s)
 +\frac{\gamma}{\gamma+1}\Lambda(s,t)
       d(x,y)^{\frac{\gamma+1}{\gamma}}\geq0
\end{equation}
for all $x,y\in M$ and $0<s<t<T$, and
\[
 v_t-\frac{A}{\gamma}|\nabla v|^{\gamma+1}
       +\frac{\theta\kappa}{t}v\geq0
\]
on $M\times (0,T)$. At a classical noncritical point, the latter inequality is equivalent
to $\mathcal Q_\gamma[v]\geq-\frac{\kappa}{t}$.
If $B=0$, the curvature assumption can be omitted.
\end{theorem}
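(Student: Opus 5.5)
The plan is to prove \eqref{intro:general-pressure-harnack} by a two-point, two-time maximum principle, the same scheme as for Proposition~\ref{intro:prop-degree-one}, and then to read off the differential inequality by letting the two points coalesce. Consider, on $\{(x,y,s,t): x,y\in M,\ 0<s<t<T\}$,
\[
 Z(x,y,s,t)=t^{\theta\kappa}v(x,t)-s^{\theta\kappa}v(y,s)
 +\frac{\gamma}{\gamma+1}\Lambda(s,t)\,d(x,y)^{\frac{\gamma+1}{\gamma}}
 +\varepsilon\,\psi(s,t),
\]
where $\psi$ is a small penalization. I will take $\psi=e^{Kt}(1+t)-\dots$ or $\psi=\frac{1}{T-t}+\frac{1}{s}$ modified so that it is strictly increasing in $t-s$ and vanishes as $s\uparrow t$. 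I then argue by contradiction. Suppose $\inf Z<0$. Since $M$ is closed and $u$ is continuous up to $t=0$, the minimum is attained at an interior configuration with $s<t$. As $s\uparrow t$, $\Lambda(s,t)\to\infty$, which forces $x=y$, and there $Z\to0$. Near $s=0$, continuity of $u$ at time zero together with $s^{\theta\kappa}\to0$ handles the boundary; the case $\theta<0$, i.e.\ $\gamma<1$, needs the sign of $v$ tracked, and this is the role of $\kappa>0$, which follows from $2A+(\gamma-1)(N-1)B>0$.

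First I record the pressure equation. Since $\theta-1=-1/\gamma$, one has $u^{\theta-1}|\nabla u|^{\gamma-1}=|\nabla v|^{\gamma-1}$ up to a power of $\theta v$. A direct computation then gives, at noncritical points,
\[
 v_t=\gamma\theta v\,\mathcal Q_\gamma[v]+\frac{A}{\gamma}|\nabla v|^{\gamma+1},
\]
together with the obvious viscosity meaning at critical points, as set up in Section~\ref{sec:prelim}. I also note the identity behind the choice of $\Lambda$: $\Phi=\frac{\gamma}{\gamma+1}\Lambda d^{(\gamma+1)/\gamma}$ solves the Hamilton--Jacobi system
\[
 \partial_t\Phi+\frac{A}{\gamma}t^{-(\gamma-1)\kappa}|\nabla_x\Phi|^{\gamma+1}=0,
\]
and the dual equation in $(y,s)$. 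This follows from the formula $\partial_t\Lambda^{-\gamma}=\frac{A(\gamma+1)}{\gamma}t^{-(\gamma-1)\kappa}$.

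At the minimum, assume $x\neq y$; the case $x=y$ is handled by perturbing $d$ slightly or by the standard diagonal argument. Let $\sigma$ be a unit-speed minimizing geodesic from $y$ to $x$, and apply the parabolic theorem of sums from \cite{CIL}, or its manifold version used in the earlier sections. This yields super/subjets. The first-order parts give $t^{\theta\kappa}\nabla v(x,t)=s^{\theta\kappa}\nabla v(y,s)=-\Lambda d^{1/\gamma}\sigma'$ up to sign. The gradients are therefore parallel to $\sigma'$ and their magnitudes are in the ratio $s^{\theta\kappa}:t^{\theta\kappa}$, which keeps both endpoints noncritical. The time parts combine with $\partial_t\Lambda,\partial_s\Lambda$ and the $\theta\kappa t^{-1}$ terms. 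I then test the second-order parts of the jets. For the longitudinal direction I use the variation $(\sigma'(d),\sigma'(0))$ with weights proportional to $t^{-\theta\kappa}$ and $s^{-\theta\kappa}$, which produces the $\frac{\gamma+1}{\gamma}\cdot\frac{1}{\gamma}d^{1/\gamma-1}$ second derivative of the distance power. For the transverse directions I use parallel fields $e_i$ scaled by a linear interpolation $\rho(r)$ between the two endpoint weights. The index form then contributes $\int(\rho'^2-\rho^2 \Ric(\sigma',\sigma'))$. The $f$-terms are absorbed by the standard $N$-dimensional trick, $\rho^2\nabla^2 f(\sigma',\sigma')$ integrated by parts plus Cauchy--Schwarz with $\frac{(df(\sigma'))^2}{N-n}$, so only $\Ric_f^N\ge0$ is used. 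If $B=0$ the transverse terms are absent, which explains why no curvature hypothesis is needed in that case.

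Substituting these second-order bounds into the equation for $v$ at both endpoints, scaled by $t^{\theta\kappa}$ and $s^{\theta\kappa}$, the first-order Hamilton--Jacobi terms cancel exactly by the identity for $\Phi$. What is left is a sum of the longitudinal coefficient $A$, times the transverse coefficient $(N-1)B$, times the factor $\gamma\theta v$ at the two endpoints. I expect this step to be the main obstacle. The diffusion term in the pressure equation is multiplied by $v$, which takes different values at $(x,t)$ and $(y,s)$, and the weights in the endpoint variations must be chosen so that the second-order contributions match after using $Z=\min$ to compare $t^{\theta\kappa}v(x,t)$ with $s^{\theta\kappa}v(y,s)$. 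The approach I will try first is to pick the endpoint weights as powers $t^{a},s^{a}$ and solve for the exponent $a$ and for $\kappa$ so that the residual is a nonnegative multiple of $\varepsilon\partial\psi$. The formula for $\kappa$ should come out precisely from balancing $A$ against $\gamma(N-1)B$, and this yields the contradiction.

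For the differential inequality, let $\phi$ touch $v$ from below at $(x_0,t_0)$. Apply \eqref{intro:general-pressure-harnack} with $(y,s)=(x_0,t_0-h)$ and $x=\exp_{x_0}(h\xi)$, and also with the roles reversed so that $(x_0,t_0)$ is the later point. Expand to first order in $h$ and optimize over $\xi$; the Legendre transform of $\frac{\gamma}{\gamma+1}|\xi|^{(\gamma+1)/\gamma}$ then produces
\[
 v_t-\frac{A}{\gamma}|\nabla v|^{\gamma+1}+\frac{\theta\kappa}{t}v\ge0
\]
in the first-order viscosity sense. The equivalence with $\mathcal Q_\gamma[v]\ge-\kappa/t$ at classical noncritical points then follows from the pressure equation above.
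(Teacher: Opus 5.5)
\textbf{There is a genuine gap.} Your framework matches the paper's: a two-point, two-time comparison for $W=t^{\theta\kappa}v$, a cost calibrated by the Hamilton--Jacobi identity, transverse fields linear in arclength, the Bakry--\'Emery completion of the square, and coalescing the points for the first-order inequality. But the step that produces the contradiction and fixes $\kappa$ is left as a plan. The mechanism you propose for it also misreads the difficulty: you do not need to compare $W(x,t)$ with $W(y,s)$ through $Z<0$.

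First, the pressure equation is $v_t=\theta v\,\mathcal Q_\gamma[v]+\frac{A}{\gamma}|\nabla v|^{\gamma+1}$, with $\theta v=u^\theta>0$; it is not $\gamma\theta v\,\mathcal Q_\gamma[v]$. With your extra factor $\gamma$, the stated equivalence would read $\mathcal Q_\gamma[v]\geq-\frac{\kappa}{\gamma t}$ and the constants would not close. Set $r=1-(\gamma-1)\kappa$, so that $r-1+\gamma\theta\kappa=0$. Homogeneity of $\mathcal Q_\gamma$ then gives
\[
 W_t=\theta t^{r-1}W\,\mathcal Q_\gamma[W]+\frac{A}{\gamma}t^{r-1}|\nabla W|^{\gamma+1}+\frac{\theta\kappa}{t}W .
\]
At each endpoint separately, the gradient term cancels against the time derivative of the cost. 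What remains is proportional to $\theta W$ in both the diffusion term and the zeroth-order term. Dividing by $\theta t^{r-1}W>0$ gives
\[
 \mathcal Q_\gamma(X)\leq-\kappa t^{-r}-(\text{penalty}),\qquad
 \mathcal Q_\gamma(Y)\geq-\kappa s^{-r},
\]
with no trace of the endpoint values of $W$.

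The weights must then be $t^r,s^r$ with exactly this $r$, not your initial $t^{-\theta\kappa}$. This is the exponent hidden in $\Lambda$, since $\int_s^t\tau^{r-1}\,d\tau=\frac{t^r-s^r}{r}$. Multiplying the endpoint inequalities by $t^{2r},s^{2r}$ gives $-\kappa(t^r-s^r)$. The longitudinal and transverse second variations give $-\Lambda^\gamma\bigl(\frac{A}{\gamma}+(N-1)B\bigr)(t^r-s^r)^2$. These agree because $\frac{r}{\gamma+1}\bigl(1+\frac{(N-1)\gamma B}{A}\bigr)=\kappa$. With weights $t^a,s^a$, the second-variation lower bound sits below the endpoint upper bound by $\kappa\,\frac{s^rt^r(t^{a-r}-s^{a-r})^2}{t^r-s^r}$, which the penalty cannot overcome. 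So the exponent is forced to be $a=r$.

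The localization and the diagonal also have real gaps. For $0<\gamma<1$ one has $\theta\kappa<0$, so $s^{\theta\kappa}\to\infty$, not $0$. At the joint corner $s,t\downarrow0$, the difference $W(x,t)-W(y,s)$ has the form $-\infty+\infty$, and positivity of $\kappa$ does not resolve this. The paper handles it in three steps:
\begin{itemize}
\item it translates time forward;
\item it uses the singular-range lemma (comparison, plus space-time Lipschitz bounds up to the new initial time) to show the Harnack expression is nonnegative for $t\leq t_*$;
\item it minimizes only over $t\geq t_*$.
\end{itemize}

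On the diagonal, $d^{\frac{\gamma+1}{\gamma}}$ is not $C^2$ when $\gamma>1$. When $\gamma<1$ it is exactly the borderline power excluded by the admissibility condition for zero-gradient tests ($C\ell^q$ needs $q>1+\frac{1}{\gamma}$). The paper therefore uses
\[
 h_\varepsilon(\ell)=\int_0^\ell\frac{w^{\frac{\gamma+1}{\gamma}}}{w+\varepsilon^2}\,dw .
\]
This kernel vanishes to order $2+\frac{1}{\gamma}$ and is admissible. It satisfies the Hamilton--Jacobi identity up to a nonnegative $O(\varepsilon^2)$ error, which the penalty $\frac{\varepsilon}{b-t}$ dominates. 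A diagonal minimum then yields a zero-gradient lower test with $\varphi_t=-\frac{\varepsilon}{(b-t)^2}<0<\frac{\theta\kappa}{t}W$, a contradiction.

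Two smaller points. Cut points require a smooth upper support for distance that realizes all the variations. In the coalescing step, only the configuration with $(x_0,t_0)$ as the later point is compatible with a lower test.
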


The range condition is automatic for $\gamma>1$; for
$0<\gamma<1$, it is equivalent to
$(1-\gamma)(N-1)B<2A$. For $B>0$ and $N=n$ in the
unweighted Euclidean setting,
this is the same singular range that appears in the intrinsic Harnack
theory of~\cite{KurkinenSiltakoski2025}, after matching the parameters
of the two operators. The role of the quantity
$1+\frac{(n-1)\gamma B}{A}$ is also consistent with the effective
dimension in the radial reductions of~\cite{ParviainenVazquez2020}.
For this operator class, Theorem~\ref{intro:homogeneous-pressure}
gives the explicit global comparison
\eqref{intro:general-pressure-harnack} for viscosity solutions on
weighted manifolds.
For $B=0$, we have $\kappa=\frac{1}{2\gamma}$, and the result
holds on an arbitrary closed manifold.
Homogeneous infinity diffusion in the range $\gamma>1$ was studied
by Portilheiro and V\'azquez~\cite{PortilheiroVazquez2013}, including
explicit solutions and large-time behavior.

For the ordinary weighted $p$-Laplacian
\[
 \Delta_{p,f}u
 =e^f\operatorname{div}\left(e^{-f}|\nabla u|^{p-2}\nabla u\right),
\]
Theorem~\ref{intro:homogeneous-pressure} takes the following form.
We state this specialization separately to make the comparison with
the pressure estimates in the literature explicit.

\begin{theorem}\label{thm:intro-weighted-p}
Let $(M^n,g,e^{-f}d\mu_g)$ be closed and connected, with
$\Ric_f^N\geq0$ for $n\leq N<\infty$, and with $f$ constant
when $N=n$. Let $u\in C(M\times[0,T))$ be a positive viscosity
solution of $u_t=\Delta_{p,f}u$, for
$p>\frac{2N}{N+1}$ and $p\ne2$. Define
\[
 \theta=\frac{p-2}{p-1},\quad v=\frac{u^\theta}{\theta},\quad
 \kappa=\frac{N}{p+N(p-2)},\quad
 r=\frac{p}{p+N(p-2)}.
\]
Then
\begin{equation}\label{intro:weighted-p-harnack}
 t^{\theta\kappa}v(x,t)-s^{\theta\kappa}v(y,s)
 +\frac{p-1}{p}
 \left(\frac{r}{p(t^r-s^r)}\right)^{\frac{1}{p-1}}
 d(x,y)^{\frac{p}{p-1}}\geq0
\end{equation}
for all $x,y\in M$ and $0<s<t<T$. Moreover,
\begin{equation}\label{intro:weighted-p-differential}
 v_t-|\nabla v|^p+\frac{\theta\kappa}{t}v\geq0.
\end{equation}
At a classical noncritical point,
\eqref{intro:weighted-p-differential} is equivalent to
$\Delta_{p,f}v\geq-\frac{\kappa}{t}$.
\end{theorem}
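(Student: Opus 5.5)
Theorem~\ref{thm:intro-weighted-p} is the specialization of Theorem~\ref{intro:homogeneous-pressure} with $(\gamma,A,B)=(p-1,p-1,1)$, so I would prove it by substitution followed by an explicit integration. At noncritical points,
\[
\Delta_{p,f}u=|\nabla u|^{p-2}\bigl(\Delta_f u+(p-2)\Delta_\infty^{\mathrm N}u\bigr)=|\nabla u|^{\gamma-1}\bigl((A-B)\Delta_\infty^{\mathrm N}u+B\Delta_f u\bigr)=\mathcal Q_{p-1}[u].
\]
I also need the viscosity conventions of Section~\ref{sec:prelim} for $\mathcal Q_\gamma$ at critical gradients to agree with those used for $\Delta_{p,f}$. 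I expect to simply cite this identification of solution classes.

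Next I check the hypotheses. We have $\gamma=p-1>0$, and $p\neq2$ gives $\gamma\ne1$. The range condition reads
\[
2A+(\gamma-1)(N-1)B=2(p-1)+(p-2)(N-1)=p(N+1)-2N,
\]
which is positive exactly when $p>\frac{2N}{N+1}$.

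Then I compute the parameters. First, $\theta=\frac{\gamma-1}{\gamma}=\frac{p-2}{p-1}$. For $\kappa$, the numerator is $A+\gamma(N-1)B=(p-1)N$ and the denominator is $(p-1)\bigl(p+N(p-2)\bigr)$, so
\[
\kappa=\frac{N}{p+N(p-2)}.
\]
This also gives $\frac{A}{\gamma}=1$ and $\frac{A(\gamma+1)}{\gamma}=p$. Hence the differential inequality of Theorem~\ref{intro:homogeneous-pressure} becomes \eqref{intro:weighted-p-differential}, and $\mathcal Q_\gamma[v]\ge-\kappa/t$ becomes $\Delta_{p,f}v\ge-\kappa/t$.

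For the Harnack form, I compute $1-(\gamma-1)\kappa=1-\frac{N(p-2)}{p+N(p-2)}=r$. Since $r>0$ in the admissible range, the time integral is $\int_s^t\tau^{r-1}\,d\tau=(t^r-s^r)/r$. It follows that
\[
\Lambda(s,t)=\Bigl(\frac{r}{p(t^r-s^r)}\Bigr)^{\frac{1}{p-1}}.
\]
The distance power is $\frac{\gamma+1}{\gamma}=\frac{p}{p-1}$ and the prefactor is $\frac{\gamma}{\gamma+1}=\frac{p-1}{p}$. Substituting these into \eqref{intro:general-pressure-harnack} yields \eqref{intro:weighted-p-harnack}.

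The only potential obstacle is the definitional point at critical gradients. If the paper's notion of viscosity solution for $\Delta_{p,f}$ is phrased differently from that for $\mathcal Q_{p-1}$, for instance in the singular range $p<2$, I would verify directly from the definitions in Section~\ref{sec:prelim} that the two coincide. Everything else is algebra.
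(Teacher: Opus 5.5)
Your proposal is correct and matches the paper's own proof: it likewise substitutes $(\gamma,A,B)=(p-1,p-1,1)$ into Theorem~\ref{intro:homogeneous-pressure}, observes that the range condition becomes $p>\frac{2N}{N+1}$, and obtains $\kappa$, $r$, $\Lambda(s,t)$ and the differential inequality by direct computation, and your algebra checks out. The critical-gradient point needs no further verification, because the paper defines viscosity solutions of $\Delta_{p,f}$ through the $\mathcal Q_{p-1}$ conventions of Section~\ref{sec:prelim}.
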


The constant $\kappa$ in this theorem has an established history.
The Euclidean pressure estimate is associated with the work of
Esteban and V\'azquez~\cite{EstebanVazquez1990}, and the geometric
pressure method is developed in~\cite{KotschwarNi2009}.
More directly, Wang~\cite[Theorem~1.11]{Wang2021}, specialized
to the ordinary weighted $p$-heat equation, gives the same constant
$\frac{N}{p+N(p-2)}$ for smooth pressure solutions on closed
weighted manifolds with nonnegative
finite-dimensional Bakry-\'{E}mery curvature, in both ranges
$p>2$ and $\frac{2N}{N+1}<p<2$.
The slow-diffusion case also follows from the more general
reaction-diffusion estimate of Wang and
Wang~\cite{WangWang2020} by taking the power inside the diffusion
to be one and the reaction coefficient to be zero.
The contribution of Theorem~\ref{thm:intro-weighted-p} is a direct
proof of the global comparison for the
stated class of continuous viscosity solutions, with a first-order
viscosity conclusion that includes critical points. The conjugate
power of the distance and the optimized time cost also have
antecedents in~\cite{AuchmutyBao1994}; here they are built into
the comparison argument itself.

On closed smooth metric measure spaces with $\Ric_f^N\geq-Kg$,
we also obtain a global pressure Harnack inequality and an associated
differential estimate for the weighted $p$-heat equation, for
$p>\frac{2N}{N+1}$ and $p\ne2$. The time weight is hyperbolic,
and the constants depend on the initial amplitude without an
a priori gradient bound. The full statement and proof are given
in Theorem~\ref{intro:negative-pressure} in
Section~\ref{sec:negative-curvature}, where the same method also
gives a weighted heat estimate for $p=2$.

There are two main technical points in the proofs. First, the
comparison uses independent endpoint times. We apply the maximum principle
for semicontinuous functions to the full space-time variables and retain the appropriate
parabolic inequalities at each endpoint. The weights in the endpoint
variations are chosen according to the homogeneity of the operator.
Second, on a manifold the distance need not be smooth at a
contact pair. A smooth upper support constructed along a
minimizing geodesic allows the second variation argument to be
used there. The transverse variations produce the curvature term,
while the longitudinal variation determines the spatial cost.
The purely longitudinal equations therefore require no curvature
assumption. For singular equations, the treatment of zero gradients
and the required positive-time regularity are part of the viscosity
argument.

The paper is organized as follows. Section~\ref{sec:prelim} fixes the viscosity
conventions and proves the auxiliary jet and regularity results.
Section~\ref{sec:fully} treats homogeneous uniformly elliptic equations on
Euclidean space and flat tori. Section~\ref{sec:homogeneous} proves the degree-one comparison and
deduces the normalized $p$ and infinity heat estimates.
Section~\ref{sec:homogeneous-power} considers general
homogeneous isotropic diffusion and its weighted $p$-Laplacian
specialization. Section~\ref{sec:negative-curvature} treats a negative weighted Ricci lower
bound.
Section~\ref{sec:harnack-manifolds} establishes the fully nonlinear estimates on curved
manifolds.

\vskip0.5cm

\textbf{AI disclosure.} OpenAI’s ChatGPT assisted the authors in developing this paper, particularly Theorem \ref{thm:intro-weighted-p} and Theorem \ref{intro:negative-pressure}. All proofs are complete and all arguments and calculations are easy to verify by hand. The authors take full responsibility for the mathematical content.

% END SECTION 1

% BEGIN MAIN PROOFS
\section{Preliminaries}\label{sec:prelim}

We fix the viscosity conventions and prove two facts used in the
comparison arguments. The first permits the two endpoint times to
vary independently. The second gives positive-time Lipschitz
regularity in the singular range, including the purely longitudinal
case.

We write $\Sym(n)$ for the space of real symmetric $n\times n$
matrices. The uniform ellipticity assumption on $F$ is equivalent to
\[
 \mathcal M^-_{\lambda,\Lambda}(X-Y)
 \leq F(X)-F(Y)
 \leq\mathcal M^+_{\lambda,\Lambda}(X-Y),
\]
for $X,Y\in\Sym(n)$, where
\[
 \mathcal M^-_{\lambda,\Lambda}(Z)
 =\min_{\lambda I\leq H\leq\Lambda I}\tr(HZ),
\]
and
\[
 \mathcal M^+_{\lambda,\Lambda}(Z)
 =\max_{\lambda I\leq H\leq\Lambda I}\tr(HZ).
\]

On a Riemannian manifold, $\nabla$, $\nabla^2$, and
$\Delta=\tr_g\nabla^2$ denote the gradient, Hessian, and
Laplace-Beltrami operator. Norms and inner products are taken with
respect to $g$, and $d$ denotes the Riemannian distance. A closed
manifold is compact and has no boundary. We retain the convention
from the Introduction that $f$ is constant when the
curvature-dimension parameter $N$ equals $n$. We also write
$\Ric_f=\Ric+\nabla^2f$.

For the operator in \eqref{intro:redraft-operator}, it is convenient
to set
\[
 \mathcal Q_\gamma(x,q,X)
 =|q|^{\gamma-1}
 \left(B\tr_gX+(A-B)\frac{X(q,q)}{|q|^2}
                  -B\langle\nabla f,q\rangle\right),
\]
for $q\ne0$.
Here $q\in T_xM$ and $X$ is a symmetric bilinear form on $T_xM$;
we identify vectors and covectors using the metric. The parameters
satisfy $A>0$, $B\geq0$, and $\gamma>0$. At noncritical points,
\begin{align*}
 \Delta_{p,f}^{\mathrm N}u
 &=\mathcal Q_1[u],\quad\text{for }A=p-1,\ B=1,\\
 \Delta_\infty^{\mathrm N}u
 &=\mathcal Q_1[u],\quad\text{for }A=1,\ B=0,\\
 \Delta_{p,f}u
 &=\mathcal Q_{p-1}[u],\quad\text{for }A=p-1,\ B=1.
\end{align*}
Thus $\Delta_{p,f}u=|\nabla u|^{p-2}\Delta_{p,f}^{\mathrm N}u$
there.
With this convention, $\frac{1}{p-1}\Delta_{p,f}^{\mathrm N}$
converges to $\Delta_\infty^{\mathrm N}$ at nonzero gradients
as $p\to\infty$; the drift disappears under this time rescaling.

\subsection{Viscosity solutions}\label{subsec:viscosity}

We use the test-function and semijet conventions of~\cite{CIL}.
Unless stated otherwise, a test function is $C^2$ in space and time,
and its contact with the solution occurs at an interior space-time
point. Spatial derivatives on a manifold are covariant derivatives.
An upper test touches from above, and a lower test touches from
below, with equality at the contact point.

For $u_t=F(D^2u)$, an upper semicontinuous function $u$ is a
viscosity subsolution if every upper test $\varphi$ satisfies
\[
 \varphi_t\leq F(D^2\varphi)
\]
at the contact point. A lower semicontinuous function is a
viscosity supersolution if the reverse inequality holds at every
lower test. A continuous function is a viscosity solution when it
is both a subsolution and a supersolution. Prescribed initial data
are attained continuously.

For $u_t=\mathcal Q_\gamma[u]$, the inequalities at nonzero-gradient
tests are
\[
 \varphi_t\leq
 \mathcal Q_\gamma(x,\nabla\varphi,\nabla^2\varphi)\quad\text{at an upper test},
\]
and
\[
 \varphi_t\geq
 \mathcal Q_\gamma(x,\nabla\varphi,\nabla^2\varphi)\quad\text{at a lower test}.
\]
At critical points, we distinguish the following three cases.

If $\gamma>1$, the operator has the continuous extension
$\mathcal Q_\gamma(x,0,X)=0$. Hence a zero-gradient upper test
satisfies $\varphi_t\leq0$, and a zero-gradient lower test satisfies
$\varphi_t\geq0$.
If $\gamma=1$, the upper and lower semicontinuous envelopes are
\[
 \mathcal Q_1^*(x,0,X)
 =B\tr_gX+\max_{|e|=1}(A-B)X(e,e),
\]
and
\[
 \mathcal Q_{1,*}(x,0,X)
 =B\tr_gX+\min_{|e|=1}(A-B)X(e,e).
\]
At zero-gradient contacts we require
\[
 \varphi_t\leq\mathcal Q_1^*(x,0,\nabla^2\varphi)\quad\text{at an upper test},
\]
and
\[
 \varphi_t\geq\mathcal Q_{1,*}(x,0,\nabla^2\varphi)\quad\text{at a lower test}.
\]
For $A=1$ and $B=0$, the two envelopes are respectively the
largest and smallest eigenvalues of $X$.
If $0<\gamma<1$, we use admissible tests, as
in~\cite[Section~2]{KurkinenSiltakoski2025}. A zero-gradient test
at $(x_0,t_0)$ is admissible if, in a neighborhood of that point,
\[
 |\varphi(x,t)-\varphi(x_0,t_0)-\varphi_t(x_0,t_0)(t-t_0)|
 \leq\omega(d(x,x_0))+o(|t-t_0|),
\]
where the time remainder is independent of $x$ and the nonnegative
function $\omega\in C^2([0,r_0))$ satisfies
$\omega(0)=\omega'(0)=\omega''(0)=0$,
$\omega''(\ell)>0$ for $0<\ell<r_0$, and
\[
 (\omega'(\ell))^{\gamma-1}
 \left(\omega''(\ell)+\frac{\omega'(\ell)}{\ell}\right)
 \longrightarrow0
\]
as $\ell\downarrow0$.
We impose $\varphi_t\leq0$ at an admissible upper test and
$\varphi_t\geq0$ at an admissible lower test. No condition is
imposed at other zero-gradient tests. In particular,
$\omega(\ell)=C\ell^q$ is admissible for $C>0$ and
$q>1+\frac{1}{\gamma}$. Indeed, its second-order contribution is
of order $\ell^{\gamma(q-1)-1}$, while the drift is bounded by
$B|\nabla f|(\omega')^\gamma$; both tend to zero.

We recall the jet notation used below. In normal coordinates at
$x$, a triple $(a,q,X)$ belongs to the parabolic superjet
$\mathcal P^{2,+}u(x,t)$ if
\[
 u(\exp_x\xi,t+\tau)
 \leq u(x,t)+a\tau+\langle q,\xi\rangle
       +\frac{1}{2}X(\xi,\xi)+o(|\xi|^2+|\tau|).
\]
The parabolic subjet $\mathcal P^{2,-}u(x,t)$ is defined by the
reverse inequality. The closed jets
$\overline{\mathcal P}^{2,\pm}u(x,t)$ are limits of jets at nearby
points, with the points, function values, and jet components all
converging. We write $J^{2,\pm}$ and $\overline J^{2,\pm}$ for
the corresponding elliptic jets, defined by a full quadratic
expansion in all variables. In particular, time is one of the
elliptic variables when we apply the maximum principle for semicontinuous functions below.

For a continuous operator, the test-function inequalities are
equivalent to the jet inequalities and pass to closed jets. For
$\mathcal Q_\gamma$, we use this fact at nonzero gradients. At
zero gradients we use the continuous extension for $\gamma>1$,
the envelopes for $\gamma=1$, and admissible tests for
$0<\gamma<1$. Normal coordinates identify the spatial Hessian
block of a jet with the covariant Hessian at its base point.

All first-order differential inequalities in this paper are
viscosity supersolution inequalities: the asserted inequality
must hold for every $C^1$ function touching the unknown from
below.

\subsection{Jets at two independent times}

The endpoint times in our comparisons are independent. We
therefore apply the elliptic maximum principle for semicontinuous functions to all space-time
variables and then retain the parabolic components of the jets.

\begin{lemma}\label{lemma:2.1 two-point-jets}
Let $\Omega\subset\R^n$ be open and let
$v\in C(\Omega\times(0,T))$. Suppose
\[
 Z(x,t,y,s)=v(x,t)-v(y,s)+\psi(x,t,y,s)
\]
has a local minimum at $P=(x_0,t_0,y_0,s_0)$, where $\psi$ is
$C^2$ near $P$. For every $\rho>0$, there are
$X,Y\in\Sym(n)$ such that
\begin{align}\label{eq: 2.1}
 (-\psi_t,-D_x\psi,X)
 &\in\overline{\mathcal P}^{2,-}v(x_0,t_0),\notag\\
 (\psi_s,D_y\psi,Y)
 &\in\overline{\mathcal P}^{2,+}v(y_0,s_0),
\end{align}
and
\begin{equation}\label{eq:2.2}
 \begin{pmatrix}-X&0\\0&Y\end{pmatrix}
 \leq D^2_{x,y}\psi+\rho I_{2n}.
\end{equation}
All derivatives of $\psi$ are evaluated at $P$.
\end{lemma}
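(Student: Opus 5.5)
The plan is to reduce the statement to the elliptic maximum principle for semicontinuous functions, Theorem~3.2 of~\cite{CIL}, in the $2(n+1)$ variables $z_1=(x,t)$ and $z_2=(y,s)$.

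\emph{Setup.} Set
\[
 w_1(x,t)=-v(x,t),\qquad w_2(y,s)=v(y,s),\qquad \Phi=\psi .
\]
Then $-Z=w_1(z_1)+w_2(z_2)-\Phi(z_1,z_2)$, and this function has a local maximum at $P$. Both $w_i$ are continuous, hence upper semicontinuous. Write $\hat z=(z_1,z_2)$. Theorem~3.2 of~\cite{CIL}, in the form with a parameter $\varepsilon>0$, gives symmetric $(n+1)\times(n+1)$ matrices $\hat X_1,\hat X_2$ with
\[
 (D_{z_1}\Phi,\hat X_1)\in\overline J^{2,+}w_1(z_1^0),\qquad
 (D_{z_2}\Phi,\hat X_2)\in\overline J^{2,+}w_2(z_2^0),
\]
and
\[
 \begin{pmatrix}\hat X_1&0\\0&\hat X_2\end{pmatrix}
 \leq D^2\Phi+\varepsilon (D^2\Phi)^2,
\]
where $D^2\Phi=D^2_{\hat z}\psi(P)$ is the full Hessian in $(x,t,y,s)$.

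\emph{Translating back to $v$.} Since $w_1=-v$, the first jet says
\[
 (-D_{z_1}\psi,-\hat X_1)\in\overline J^{2,-}v(x_0,t_0).
\]
The second says directly that $(D_{z_2}\psi,\hat X_2)\in\overline J^{2,+}v(y_0,s_0)$.

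\emph{Passing to parabolic jets.} An elliptic jet $(p,\hat X)$ in the variables $(x,t)$ yields a parabolic jet: keep the time component of $p$, the spatial components of $p$, and the spatial $n\times n$ block of $\hat X$. This works because the $O(\tau^2)$ and mixed $\xi\tau$ terms are $o(|\xi|^2+|\tau|)$. The same reasoning passes through the limits defining the closed jets, so $\overline J^{2,\pm}$ projects into $\overline{\mathcal P}^{2,\pm}$. Take $X$ to be the spatial block of $-\hat X_1$ and $Y$ the spatial block of $\hat X_2$. This produces exactly the jets in \eqref{eq: 2.1}, namely $(-\psi_t,-D_x\psi,X)$ and $(\psi_s,D_y\psi,Y)$.

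\emph{The matrix inequality.} Restrict the quadratic forms in the matrix inequality to vectors of the form $(\xi,0,\zeta,0)$, that is, with zero time components. The left side restricts to $\operatorname{diag}(-X,Y)$ and $D^2\Phi$ restricts to $D^2_{x,y}\psi$. This gives
\[
 \begin{pmatrix}-X&0\\0&Y\end{pmatrix}
 \leq D^2_{x,y}\psi+\varepsilon\,\|D^2_{\hat z}\psi(P)\|^2 I_{2n}.
\]
Choosing $\varepsilon=\rho/(1+\|D^2_{\hat z}\psi(P)\|^2)$ yields \eqref{eq:2.2}.

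\emph{Expected obstacle.} The main point is to apply the elliptic theorem, not the parabolic Theorem~8.3 of~\cite{CIL}. The parabolic version requires a common time variable and a local boundedness condition on the time derivatives. Using the elliptic theorem avoids both, at the cost of the projection step above. That step is routine but needs a check that closedness survives the projection, which holds because convergence of the full jets implies convergence of their projections.
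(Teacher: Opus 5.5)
Your proposal is correct and follows the paper's proof essentially step for step. Like the paper, you apply the elliptic Theorem~3.2 of \cite{CIL} to $-v(x,t)+v(y,s)-\psi$ in the full space-time variables, with parameter $\rho/(1+\|D^2\psi(P)\|^2)$; you then restrict the matrix inequality to vectors with vanishing time components and project the closed elliptic jets to parabolic ones, using $|\xi||\tau|+\tau^2=o(|\xi|^2+|\tau|)$, which is exactly how the paper keeps the two endpoint times independent.
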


\begin{proof}
Set $z=(x,t)$ and $w=(y,s)$. The function
\[
 -v(z)+v(w)-\psi(z,w)
\]
has a local maximum at $P$. Let
$H=D^2_{z,w}\psi(P)$ be the full space-time Hessian and choose
\[
 \alpha=\frac{\rho}{1+\|H\|^2},
\]
where $\|H\|$ is the operator norm. By the maximum principle for
semicontinuous functions~\cite[Theorem~3.2]{CIL}, there are
$S_1,S_2\in\Sym(n+1)$ with
\[
 (D_z\psi,S_1)\in\overline J^{2,+}(-v)(x_0,t_0),
\]
and
\[
 (D_w\psi,S_2)\in\overline J^{2,+}v(y_0,s_0),
\]
such that
\[
 \begin{pmatrix}S_1&0\\0&S_2\end{pmatrix}
 \leq H+\alpha H^2\leq H+\rho I_{2n+2}.
\]
Taking $X=-(S_1)_{xx}$ and $Y=(S_2)_{yy}$ and restricting to
vectors whose time components vanish proves \eqref{eq:2.2}.

Changing the sign of the first jet gives a closed elliptic subjet
of $v$. A full space-time jet projects to a parabolic jet with the
same time derivative and spatial Hessian block, since
\[
 |\Delta x|\,|\Delta t|+|\Delta t|^2
 =o(|\Delta x|^2+|\Delta t|).
\]
The full quadratic remainder has the same property. Projection
also commutes with the convergent sequences defining closed
jets, which proves \eqref{eq: 2.1}.
\end{proof}

The proof applies without change to two different continuous
functions at the two endpoints. On a manifold, we use separate
normal coordinate charts at the contact points. The matrix
parameter is chosen after the contact point and the full Hessian
of the penalty have been fixed.

\subsection{Changes of variables and singular regularity}
\label{subsec:regularity}

Throughout the paper, positivity of a function in
$C(M\times[0,T))$ includes positivity at time zero. On a closed
manifold this gives a positive initial minimum. When $T=\infty$,
all maximum arguments are first made below a finite terminal
time $b<T$.

On a closed manifold, if $u\in C(M\times[0,T))$ solves
$u_t=\mathcal Q_\gamma[u]$, then
\[
 \min_Mu(\cdot,0)\leq u(x,t)\leq\max_Mu(\cdot,0).
\]
To prove the upper bound, suppose it fails before a finite time
$b<T$. For sufficiently small $\varepsilon>0$, the function
\[
 u(x,t)-\max_Mu(\cdot,0)-\frac{\varepsilon}{b-t}
\]
has a positive maximum at an interior time. The resulting upper
test is spatially constant and has positive time derivative,
contradicting each of the critical-point conventions above.
The lower bound follows by using
$\min_Mu(\cdot,0)-\frac{\varepsilon}{b-t}$ as a lower barrier.
Thus a positive solution stays in a compact subinterval of
$(0,\infty)$.

For $\gamma=1$, the substitution $v=\log u$ gives
\[
 \mathcal Q_1[u]=u\mathcal Q_1[v]+Au|\nabla v|^2,
\]
and
\[
 v_t=\mathcal Q_1[v]+A|\nabla v|^2.
\]
For $\gamma\ne1$, set
\[
 \theta=\frac{\gamma-1}{\gamma},
 \quad v=\frac{u^\theta}{\theta}.
\]
Then $\frac{dv}{du}=u^{-\frac{1}{\gamma}}$ and the chain rule gives
\[
 \mathcal Q_\gamma[u]
 =u\mathcal Q_\gamma[v]
   +\frac{A}{\gamma}u^{\frac{1}{\gamma}}
                     |\nabla v|^{\gamma+1},
\]
and
\[
 v_t=\theta v\mathcal Q_\gamma[v]
             +\frac{A}{\gamma}|\nabla v|^{\gamma+1}.
\]
In particular, $\theta v=u^\theta>0$ even when $v<0$.
These identities hold in the viscosity sense: compose a test for
$v$ with the increasing inverse transformation to obtain a test
for $u$. On the compact positive range just obtained, a smooth
increasing transformation preserves singular admissibility. Its
spatial remainder is bounded by a constant multiple of $\omega$,
and the additional time remainder is $o(|t-t_0|)$. Multiplication
by a smooth positive function of time is handled in the same way,
with its time derivative included in the transformed equation.

The next lemma provides the regularity needed to localize the
pressure comparison near the initial-time corner. No sign condition
on the curvature is required.

\begin{lemma}[Regularity in the singular range]
\label{lemma:singular-regularity}
Let $(M,g,e^{-f}d\mu_g)$ be closed and connected, and let
$0<\gamma<1$, $A>0$, and $B\geq0$. Comparison holds for
subsolutions and supersolutions of $u_t=\mathcal Q_\gamma[u]$
that are continuous on $M\times[0,T)$: an ordering at time zero
persists. Every positive solution $u\in C(M\times[0,T))$ and
its pressure $v=\frac{u^\theta}{\theta}$, with
$\theta=\frac{\gamma-1}{\gamma}$, are Lipschitz in space and
time on $M\times[a,b]$, for $0<a<b<T$.
\end{lemma}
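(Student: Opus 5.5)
The argument has three parts. First comes comparison in the singular range, proved by doubling variables. Then spatial Lipschitz regularity follows from comparison with translates, or more precisely a modulus-of-continuity doubling argument. Finally, time regularity comes from barriers.

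\emph{Comparison.} Let $w$ be a subsolution and $z$ a supersolution with $w\leq z$ at $t=0$. Suppose $\sup(w-z-\frac{\varepsilon}{b-t})>0$. We double variables with the penalty
\[
\Phi(x,y,t)=w(x,t)-z(y,t)-\frac{\varepsilon}{b-t}-\frac{1}{q\delta}\,d(x,y)^q,\qquad q>1+\frac{1}{\gamma},
\]
using a smooth upper support for $d$ along a minimizing geodesic where $d$ is not smooth. Because the exponent $q$ is admissible in the sense of the convention of Section~\ref{subsec:viscosity}, a maximum with $x_\delta=y_\delta$ produces admissible zero-gradient tests on both sides. Those conventions force $\varphi_t\leq 0$ for the subsolution test and $\varphi_t\geq0$ for the supersolution test. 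This contradicts the strict time derivative $\varepsilon/(b-t)^2>0$.

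If instead $x_\delta\neq y_\delta$, the gradients are the common nonzero vector $d^{q-1}\delta^{-1}\dot\sigma$ at both ends. Lemma~\ref{lemma:2.1 two-point-jets}, applied with a common time as in the standard parabolic version of~\cite{CIL}, then gives Hessians ordered by the second variation of $d^q$. Parallel transport along the geodesic compares $\tr_g$ and the longitudinal term. The curvature of the closed manifold enters through the transverse second variation, and this error is $O(\delta^{-1}d^q)$ times a power of $|q|^{\gamma-1}$. The drift difference $B\langle\nabla f(x)-\nabla f(y),\cdot\rangle$ is bounded by $C\,d\,|\nabla_x\psi|^\gamma$. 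Both error terms tend to zero as $\delta\to0$, because $d(x_\delta,y_\delta)^q/\delta\to0$ by the usual penalization estimate.

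The step I expect to be the main obstacle is controlling $|\nabla\psi|^{\gamma-1}$, which blows up as the gradient tends to zero when $\gamma<1$. I would first try to handle it by choosing $q$ large, so that every error term has the form $d^{\gamma(q-1)-1}\delta^{-\gamma}$ times a small factor. I would then check that these terms still vanish along the penalization, exactly as in the admissibility computation in Section~\ref{subsec:viscosity}.

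\emph{Lipschitz in space.} Since $\mathcal Q_\gamma$ is invariant under adding constants and $u$ is bounded between its initial extrema, I would prove the modulus estimate
\[
u(x,t)-u(y,t)\leq \frac{C}{\sqrt{t-a/2}}\,d(x,y)+\dots
\]
by the same doubling argument. The penalty is now $L(t)\,\phi(d(x,y))$ with $\phi$ concave and $\phi'>0$. At an interior maximum the gradient $L\phi'$ is nonzero, so no singular case arises. The longitudinal second variation contributes $A|L\phi'|^{\gamma-1}L\phi''$, which is very negative, and choosing $L$ large on $[a,b]$ (depending on $\max u$ and the geometry) dominates the curvature and drift errors. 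This is the Andrews--Clutterbuck modulus argument cited in the Introduction. It gives $|u(x,t)-u(y,t)|\leq L\,d(x,y)$ for $t\geq a$.

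\emph{Lipschitz in time.} Fix $t_0\geq a$. By comparison on $[t_0,t_0+h]$ with the barriers
\[
u(x_0,t_0)\pm\Bigl(L\,\phi_\epsilon(d(x,x_0))+C(\epsilon)(t-t_0)\Bigr),
\]
where $\phi_\epsilon$ is a smoothed cone, and after optimizing in $\epsilon$, we obtain $|u(x_0,t)-u(x_0,t_0)|\leq C|t-t_0|$. This works because $\mathcal Q_\gamma$ applied to a Lipschitz cone with gradient of size about $L$ is bounded. Near the vertex we use an admissible power profile $\omega(\ell)=C\ell^q$, which satisfies the zero-gradient convention.

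Finally, the positive range $[\min u(\cdot,0),\max u(\cdot,0)]$ is compact. Therefore $v=u^\theta/\theta$ is a smooth function of $u$ there, and $v$ inherits the Lipschitz bounds on $M\times[a,b]$.
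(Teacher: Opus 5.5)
\textbf{Time regularity.} Your time-Lipschitz step does not work; barriers of this kind give only a $\sqrt h$ bound. Take a barrier $\bar u\ge u(x_0,t_0)+L\,d(\cdot,x_0)$ whose value at $x_0$ exceeds $u(x_0,t_0)$ by only about $L\epsilon$. It must bend at scale $\epsilon$. For a radial profile, $\mathcal Q_\gamma[L\phi_\epsilon(d)]=L^\gamma\bigl(A(\phi_\epsilon')^{\gamma-1}\phi_\epsilon''+B(\phi_\epsilon')^\gamma\Delta_f d\bigr)$, which is of size $L^\gamma/\epsilon$ where the bending occurs. The unsmoothed cone is not bounded under $\mathcal Q_\gamma$ either, contrary to your claim: its transverse part is $BL^\gamma\Delta_f d\sim BL^\gamma(n-1)/d$. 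Hence $C(\epsilon)\gtrsim L^\gamma/\epsilon$, and optimizing $L\epsilon+C(\epsilon)h$ gives only $|u(x_0,t_0+h)-u(x_0,t_0)|\lesssim\sqrt{L^{1+\gamma}h}$. This loss is intrinsic. In flat space without drift, $u\mapsto\mu^{-1}u(\mu x,\mu^2t)$ preserves the equation and fixes cone data, so a Lipschitz bound at a single time cannot give more than $\sqrt h$ in general.

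The missing idea is homogeneity in the amplitude. Set $M_0=\max_M u(\cdot,0)$. For $\lambda>1$, $\lambda u(x,\lambda^{\gamma-1}t)$ is a solution lying above $u$ at $t=0$ by positivity. Also $M_0-u$ is a solution, because $\mathcal Q_\gamma(x,-q,-X)=-\mathcal Q_\gamma(x,q,X)$. Comparison with $\lambda=(t_2/t_1)^{1/(1-\gamma)}$ then yields $-(\lambda-1)(M_0-u(x,t_1))\le u(x,t_2)-u(x,t_1)\le(\lambda-1)u(x,t_1)$. This is a time-Lipschitz bound on $[a,b]$ that uses no spatial information.

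The paper then derives the spatial bound \emph{from} the time bound. Each slice $U=u(\cdot,t_0)$ satisfies $\mathcal Q_\gamma[U]\ge-H$ at upper and $\mathcal Q_\gamma[U]\le H$ at lower nonzero-gradient spatial tests. An elliptic two-point argument with $\varphi(s)=\frac{L}{\sigma}(1-e^{-\sigma s})$ then gives $|U(x)-U(y)|\le L\,d(x,y)$. This order also avoids the initial-time problem in your parabolic modulus argument. Your weight $C/\sqrt{t-a/2}$ has heat-equation scaling. At a positive maximum the longitudinal gain is only of order $L^\gamma$, while the time-derivative term can be of order $|L'|\operatorname{osc}u/L\sim L^2$. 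So no contradiction arises near $t=a/2$.

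\textbf{Comparison.} The comparison step has a gap on the diagonal. With a common time, a maximum at $x_\delta=y_\delta$ does not produce admissible tests. The function you would freeze, $z(x_\delta,t)+\frac{\varepsilon}{b-t}+\frac{1}{q\delta}d(x,x_\delta)^q$, is only continuous in $t$. Zero-gradient jets from a common-time theorem of sums carry no information, because the singular convention constrains only admissible tests. The common-time theorem of sums in~\cite{CIL} also needs a local bound on the time components of jets, which $\mathcal Q_\gamma$ does not supply near zero gradient. Lemma~\ref{lemma:2.1 two-point-jets} is exactly the independent-time substitute, not a common-time statement.

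The paper instead adds $-\frac{(t-s)^2}{2\varepsilon}$ and keeps $t,s$ independent. Freezing either endpoint then gives a genuine $C^2$ admissible test. The forced signs $a_1\le0\le a_2$ contradict $a_1-a_2=\chi'(t)\ge\eta>0$. Off the diagonal, your estimate agrees with the paper's: the curvature and drift error is of order $(d^q/\delta)^\gamma d^{1-\gamma}$, which tends to zero because $\gamma<1$.
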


\begin{proof}
Choose $K\geq0$ such that $\Ric_f\geq-Kg$, which is possible
by compactness. We first prove comparison. Let $u_1$ be a
subsolution and $u_2$ a supersolution with
$u_1(\cdot,0)\leq u_2(\cdot,0)$. If this ordering fails, choose
a point $(x_*,t_*)$ where $u_1-u_2>0$ and a finite time
$t_*<b<T$. Fix $\eta>0$ sufficiently small that
\[
 u_1(x_*,t_*)-u_2(x_*,t_*)-\chi(t_*)>0,
 \quad
 \chi(t)=\eta t+\frac{\eta}{b-t}.
\]
Thus $\chi'\geq\eta$ and $\chi(t)\to\infty$ as $t\uparrow b$.
For $q>1+\frac{1}{\gamma}$, maximize
\[
 u_1(x,t)-u_2(y,s)-\chi(t)-\lambda d(x,y)^q
                         -\frac{(t-s)^2}{2\varepsilon}
\]
on $M\times[0,b)\times M\times[0,b]$. Its maximum is bounded
below by a fixed positive number, independently of $\lambda$
and $\varepsilon$. Boundedness of the two functions keeps $t$
uniformly away from $b$ and gives
\[
 d(x,y)\leq C\lambda^{-\frac{1}{q}},
\]
and
\[
 |t-s|\leq C\sqrt{\varepsilon}.
\]
Initial ordering and uniform continuity exclude $t=0$ and $s=0$
when $\lambda$ is sufficiently large and $\varepsilon$ is
sufficiently small. The terminal bound on $t$ then excludes $s=b$.
All contact points are therefore interior.

Suppose first that $d=d(x,y)>0$. Apply the full space-time
maximum principle for semicontinuous functions to $u_1$ and $-u_2$. The two spatial gradients
are parallel translates of the same vector of length
$p_0=\lambda qd^{q-1}$, and the temporal components satisfy
\[
 a_1-a_2=\chi'(t)\geq\eta.
\]
Along a unit-speed minimizing geodesic from $x$ to $y$, contract
the spatial matrix inequality against parallel endpoint vectors.
The longitudinal second variation is zero. The transverse index
forms, together with the difference of the drift terms, give
\begin{align*}
 \eta
 &\leq\mathcal Q_\gamma(x,q_x,X)
             -\mathcal Q_\gamma(y,q_y,Y)\\
 &\leq-Bp_0^\gamma\int_0^d\Ric_f(e_n,e_n)\,d\ell\\
 &\leq BKq^\gamma(\lambda d^q)^\gamma d^{1-\gamma}.
\end{align*}
Here $e_n$ is the unit tangent. At cut points we use the smooth
upper support for distance furnished by the lengths of geodesic
variations, as constructed in Section~\ref{sec:homogeneous}.
The matrix error is sent to zero at each fixed contact, where
$p_0>0$, before either doubling parameter is changed.

The quantities $\lambda d^q$ are uniformly bounded, while
$d\leq C\lambda^{-\frac{1}{q}}$. Since $\gamma<1$, the last
upper bound is less than $\eta$ when $\lambda$ is sufficiently
large, uniformly for small $\varepsilon$. This is a contradiction.
If a maximizing pair lies on the spatial diagonal, freeze the
opposite endpoint in the doubled expression. The resulting
spatial tests are constant multiples of $d^q$ and hence are
admissible. They give $a_1\leq0$ and $a_2\geq0$, again
contradicting $a_1-a_2\geq\eta$. This proves comparison. We choose $\lambda$ large and
$\varepsilon$ small to keep the contacts interior and make the
curvature error less than $\eta$; only the matrix error is sent
to zero at the resulting fixed contact.

We next obtain a time Lipschitz bound. Put
$M_0=\max_Mu(\cdot,0)$. The amplitude bound gives
$0<u\leq M_0$. Homogeneity implies that
\[
 u_\lambda(x,t)=\lambda u(x,\lambda^{\gamma-1}t)
\]
is a solution for $\lambda>1$. Since
$u_\lambda(\cdot,0)\geq u(\cdot,0)$, comparison yields
$u\leq u_\lambda$. The function $M_0-u$ is also a solution:
the operator is independent of the value of the unknown and
\[
 \mathcal Q_\gamma(x,-q,-X)
 =-\mathcal Q_\gamma(x,q,X).
\]
Changing sign exchanges upper and lower tests and preserves the
admissible remainder. Comparison applies to $M_0-u$ although
this function may vanish.

For $0<t_1<t_2<T$, take
$\lambda=(\frac{t_2}{t_1})^{\frac{1}{1-\gamma}}$ and apply
the preceding two comparisons at $t_2$. They give
\[
 -(\lambda-1)(M_0-u(x,t_1))
 \leq u(x,t_2)-u(x,t_1)
 \leq(\lambda-1)u(x,t_1).
\]
On a fixed strip $[a,b]$, the derivative of
$r\mapsto r^{\frac{1}{1-\gamma}}$ is bounded for
$1\leq r\leq\frac{b}{a}$. The mean value theorem therefore
bounds $\lambda-1$ by a constant multiple of $t_2-t_1$.
Since $0\leq u,M_0-u\leq M_0$, this proves a time Lipschitz
estimate uniform in $x$.

Fix an interior time $t_0$ and a time Lipschitz constant $H$ on
a neighborhood of $t_0$. We claim that the slice
$U=u(\cdot,t_0)$ satisfies
\[
 \mathcal Q_\gamma[U]\geq-H\quad\text{at upper spatial tests},
\]
and
\[
 \mathcal Q_\gamma[U]\leq H\quad\text{at lower spatial tests},
\]
provided the test gradient is nonzero. To see this, make an upper
spatial test $\varphi$ strict at $x_0$, without changing its first
two derivatives there, and choose a small closed coordinate ball
on which $U-\varphi$ has its unique maximum at $x_0$. For $H'>H$,
maximize
\[
 u(x,t)-\varphi(x)
           -H'\sqrt{(t-t_0)^2+\varepsilon^2}
\]
on that ball and a small closed time interval about $t_0$.
If $(x_\varepsilon,t_\varepsilon)$ is a maximizing point, the
time Lipschitz bound and comparison with $(x_0,t_0)$ give
\[
 (H'-H)|t_\varepsilon-t_0|\leq H'\varepsilon,
\]
and
\[
 (U-\varphi)(x_0)-(U-\varphi)(x_\varepsilon)
 \leq H'\varepsilon.
\]
Thus the contacts are interior for small $\varepsilon$ and
converge to $(x_0,t_0)$. The time derivative of the penalty has
absolute value at most $H'$. The subsolution inequality therefore
gives $\mathcal Q_\gamma[\varphi]\geq-H'$ in the limit.
Letting $H'\downarrow H$ proves the upper-test assertion.
For a lower test, minimize after adding the same time penalty;
the supersolution inequality gives the other assertion. These
inequalities also pass to closed spatial jets with nonzero
gradient.

It remains to bound the spatial oscillation of $U$. Write
\[
 D=\operatorname{diam}M,
 \quad \sigma=\frac{1}{D}+\frac{BKD}{2A},
 \quad \varphi(s)=\frac{L}{\sigma}(1-e^{-\sigma s}),
\]
where $L>0$ will be chosen below. If
$U(x)-U(y)-2\varphi(\frac{d(x,y)}{2})$ has a positive maximum,
then its contact distance is nonzero. Apply the maximum principle for semicontinuous functions
and use opposite longitudinal endpoint vectors and parallel
transverse endpoint vectors along a minimizing geodesic.
The contact gradients have length $\varphi'(\frac{d}{2})>0$,
so the slice inequalities apply. The longitudinal second
variation of the cost is $2\varphi''(\frac{d}{2})$, while the
transverse terms combine with the drift into $\Ric_f$. After
removing the matrix error, we obtain
\begin{align*}
 -2H
 &\leq 2A\left(\varphi'\left(\frac{d}{2}\right)\right)^{\gamma-1}
                 \varphi''\left(\frac{d}{2}\right)
   -B\left(\varphi'\left(\frac{d}{2}\right)\right)^\gamma
                   \int_0^d\Ric_f(e_n,e_n)\,d\ell\\
 &\leq(-2A\sigma+BKd)L^\gamma e^{-\frac{\gamma\sigma d}{2}}\\
 &\leq-\frac{2A}{D}L^\gamma e^{-\frac{\gamma\sigma D}{2}}.
\end{align*}
As before, a smooth upper distance support justifies the
calculation at cut points. Choosing
$L^\gamma>\frac{HD}{A}e^{\frac{\gamma\sigma D}{2}}$ gives a
contradiction. Reversing $x$ and $y$ and using
$\varphi(s)\leq Ls$ proves
\[
 |U(x)-U(y)|\leq Ld(x,y).
\]
The constants are uniform on every compact positive-time strip.
Finally, $\frac{dv}{du}=u^{-\frac{1}{\gamma}}$ is bounded on the
positive range of $u$, so the pressure has the same spatial and
temporal Lipschitz regularity.
\end{proof}

\section{Homogeneous uniformly elliptic equations on flat spaces}
\label{sec:fully}

We first prove Theorem~\ref{thm: main fully nonlinear}. The logarithmic
transformation allows us to compare two scaled Hessians by uniform
ellipticity. On Euclidean space, we also need a spatial localization,
since the logarithm of a positive bounded solution need not be bounded
from below.

\begin{proof}[Proof of Theorem~\ref{thm: main fully nonlinear}]
Set $v=\log u$ and $C=\frac{n\Lambda}{2\lambda}$. By exponentiating
upper and lower test functions and using the homogeneity of $F$, we
find that $v$ satisfies
\begin{equation}\label{eq: 3.1}
 v_t=F(D^2v+Dv\otimes Dv)
\end{equation}
in the viscosity sense.

We begin with a flat torus. Fix $0<a<b<T$ and consider
\[
 Z(x,t,y,s)=v(x,t)-v(y,s)+C\log\frac{t-a}{s-a}
                  +\frac{d(x,y)^2}{4\lambda(t-s)}
\]
for $a<s<t<b$. Suppose that $Z$ is negative at some pair. For a
sufficiently small fixed $\varepsilon>0$, the function
\[
 Z_\varepsilon(x,t,y,s)=Z(x,t,y,s)+\frac{\varepsilon}{b-t}
\]
is still negative there. We claim that it attains a negative interior
minimum. The function $v$ is bounded and uniformly continuous on
$M\times[a,b]$. The terminal term excludes $t\uparrow b$, and the
logarithmic term excludes $s\downarrow a$ whenever $t-s$ stays
positive. If $t-s\to0$ along a fixed negative sublevel, boundedness
forces $d(x,y)\to0$. Uniform continuity then gives
$v(x,t)-v(y,s)\to0$, which is impossible because all three penalty
terms are nonnegative. These observations exclude the boundary of
the time triangle, including the corner $s=t=a$, and compactness
gives the claimed minimum.

Let $(x_0,t_0,y_0,s_0)$ be a point of negative minimum. Write the
torus as the quotient of $\R^n$ by a lattice $\Gamma$. We choose
local lifts $\widetilde x,\widetilde y$ and $k_0\in\Gamma$ such that
\[
 d(x_0,y_0)^2=|\widetilde x_0-\widetilde y_0+k_0|^2.
\]
For nearby endpoints, the expression on the right with $x,y$ in
place of $x_0,y_0$ is a smooth upper support for $d(x,y)^2$.
Consequently, replacing the squared distance by this expression
preserves the local minimum, even if the endpoints are cut points.
With
\[
 \psi(x,t,y,s)=C\log\frac{t-a}{s-a}
       +\frac{|\widetilde x-\widetilde y+k_0|^2}{4\lambda(t-s)}
       +\frac{\varepsilon}{b-t},
\]
the function $v(x,t)-v(y,s)+\psi(x,t,y,s)$ therefore has a local
minimum at $(x_0,t_0,y_0,s_0)$.

Lemma~\ref{lemma:2.1 two-point-jets} gives, for every $\rho>0$,
matrices $X,Y\in\Sym(n)$ satisfying
\begin{align}
 (-\psi_t,-D_x\psi,X)
   &\in\overline{\mathcal P}^{2,-}v(x_0,t_0),\notag\\
 (\psi_s,D_y\psi,Y)
   &\in\overline{\mathcal P}^{2,+}v(y_0,s_0),
 \label{eq: 3.2}
\end{align}
and
\begin{equation}\label{eq: 3.3}
 \begin{pmatrix}X&0\\0&-Y\end{pmatrix}
       \geq-D^2_{x,y}\psi-\rho I_{2n}.
\end{equation}
All derivatives are evaluated at the minimum. We now suppress the
subscripts on its coordinates and write
\[
 p=-D_x\psi=D_y\psi
       =-\frac{\widetilde x-\widetilde y+k_0}{2\lambda(t-s)}.
\]
The spatial Hessian of the penalty is
\begin{equation}\label{eq: 3.4}
 D^2_{x,y}\psi=\frac{1}{2\lambda(t-s)}
       \begin{pmatrix}I_n&-I_n\\-I_n&I_n\end{pmatrix}.
\end{equation}
Since $F$ is continuous, the viscosity inequalities hold for the
closed jets in \eqref{eq: 3.2}. Hence
\[
 -\psi_t\geq F(X+p\otimes p),
\]
and
\[
 \psi_s\leq F(Y+p\otimes p).
\]
Multiplying by the squared time weights and subtracting gives
\begin{align}
  -(t-a)^2\psi_t-(s-a)^2\psi_s
 &\geq(t-a)^2F(X+p\otimes p)\notag\\
 &\quad-(s-a)^2F(Y+p\otimes p).
 \label{eq: 3.5}
\end{align}

We apply ellipticity after scaling the two matrices. Choose
$A\in\Sym(n)$, with $\lambda I_n\leq A\leq\Lambda I_n$, attaining
the lower Pucci minimum for
\[
 (t-a)^2(X+p\otimes p)-(s-a)^2(Y+p\otimes p).
\]
Positive homogeneity then gives
\begin{align}
 &(t-a)^2F(X+p\otimes p)-(s-a)^2F(Y+p\otimes p)\notag\\
 &\quad\geq(t-a)^2\tr(AX)-(s-a)^2\tr(AY)
       +\bigl((t-a)^2-(s-a)^2\bigr)\langle Ap,p\rangle\notag\\
 &\quad\geq(t-a)^2\tr(AX)-(s-a)^2\tr(AY)
       +\lambda\bigl((t-a)^2-(s-a)^2\bigr)|p|^2.
 \label{eq: 3.6}
\end{align}
The last inequality uses $t>s>a$. This choice of $A$ uses only
uniform ellipticity; no derivative of $F$ is involved.

Contract \eqref{eq: 3.3} against
$((t-a)A^{\frac{1}{2}}e_i,(s-a)A^{\frac{1}{2}}e_i)$ and sum over
an orthonormal basis $e_1,\ldots,e_n$. By \eqref{eq: 3.4}, we obtain
\begin{align}
 &(t-a)^2\tr(AX)-(s-a)^2\tr(AY)\notag\\
 &\quad\geq-\frac{t-s}{2\lambda}\tr A
       -\rho\bigl((t-a)^2+(s-a)^2\bigr)\tr A\notag\\
 &\quad\geq-C(t-s)
       -n\Lambda\rho\bigl((t-a)^2+(s-a)^2\bigr).
 \label{eq: 3.7}
\end{align}
On the other hand,
\[
 \psi_t=\frac{C}{t-a}-\lambda|p|^2
                         +\frac{\varepsilon}{(b-t)^2},
\]
and
\[
 \psi_s=-\frac{C}{s-a}+\lambda|p|^2.
\]
Substituting these identities into \eqref{eq: 3.5} and applying
\eqref{eq: 3.6} and \eqref{eq: 3.7}, we see that both the terms
containing $C$ and those containing $|p|^2$ cancel. What remains is
\[
 0<\frac{\varepsilon(t-a)^2}{(b-t)^2}
       \leq n\Lambda\rho\bigl((t-a)^2+(s-a)^2\bigr).
\]
The contact point and $\varepsilon$ are fixed independently of
$\rho$. Letting $\rho\downarrow0$ gives a contradiction. We have
proved $Z\geq0$ on the torus. For a fixed pair with $0<s<t<T$, we
choose $b>t$ and let $a\downarrow0$. Exponentiation proves
\eqref{eq 1.2 harnack}.

We next consider $\R^n$. Fix $\zeta>0$ and replace $v$ by
$\log(u+\zeta)$. Addition of a constant preserves the original
equation, so this function still satisfies \eqref{eq: 3.1}. It is
also bounded and uniformly continuous on every strip
$\R^n\times[a,b]$. Define $Z$ as above with $d(x,y)=|x-y|$.
If $Z$ is negative at some pair, choose a small fixed
$\varepsilon>0$ and consider
\[
 Z_\varepsilon(x,t,y,s)+\eta(|x|^2+|y|^2)
\]
for $\eta>0$. For all sufficiently small $\eta$, this function
takes a value below a fixed negative number. The spatial penalty
is coercive, and the preceding time-boundary argument applies.
It therefore attains a negative interior minimum.

At these minima we have
\[
 \eta(|x|^2+|y|^2)+\frac{|x-y|^2}{4\lambda(t-s)}
       \leq2\|v\|_{L^\infty(\R^n\times[a,b])}.
\]
Moreover, $t-s$ has a positive lower bound independent of small
$\eta$. Otherwise the displayed inequality forces $|x-y|\to0$,
and uniform continuity contradicts the fixed negative bound for
the minimum values. Thus $p=-\frac{x-y}{2\lambda(t-s)}$ remains
bounded, and
\[
 \eta(|x|+|y|)\longrightarrow0,
\]
and
\[
 \eta^2(|x|^2+|y|^2)\longrightarrow0
\]
as $\eta\downarrow0$.

We apply the jet lemma to $\psi+\eta(|x|^2+|y|^2)$. The time
derivatives are unchanged, the endpoint gradients become
$p-2\eta x$ and $p+2\eta y$, and \eqref{eq: 3.3} holds with
$\rho+2\eta$ in place of $\rho$. Choose the Pucci minimizing
matrix for the difference of the corresponding scaled endpoint
matrices. The same contraction applies. The additional
gradient terms are controlled by
\[
 \left|\langle A(p-2\eta x),p-2\eta x\rangle
                     -\langle Ap,p\rangle\right|
       \leq4\Lambda\bigl(\eta|p||x|+\eta^2|x|^2\bigr),
\]
and the analogous bound with $y$ at the other endpoint. Repeating
the cancellation above gives
\begin{align*}
 \frac{\varepsilon(t-a)^2}{(b-t)^2}
 &\leq n\Lambda(\rho+2\eta)
                     \bigl((t-a)^2+(s-a)^2\bigr)\\
 &\quad+4\Lambda(b-a)^2
       \bigl(\eta|p|(|x|+|y|)+\eta^2(|x|^2+|y|^2)\bigr).
\end{align*}
For each fixed $\eta$, we first let $\rho\downarrow0$, and then
let $\eta\downarrow0$. The right-hand side tends to zero. The
left-hand side has a positive lower bound, since
$t-a\geq t-s$ and $b-t\leq b-a$. This contradiction proves
$Z\geq0$ for $v=\log(u+\zeta)$. Letting $a\downarrow0$ and then
$\zeta\downarrow0$ proves \eqref{eq 1.2 harnack} on $\R^n$.

Finally, let a $C^1$ function $\varphi$ touch $v=\log u$ from
below at $(x,t)$. In local flat coordinates, choose
\[
 y=x+2\lambda(t-s)D\varphi(x,t)
\]
for $s<t$ sufficiently close to $t$. The logarithmic form of the
Harnack inequality and the lower touching property imply
\[
 0\leq\varphi(x,t)-\varphi(y,s)+C\log\frac{t}{s}
                         +\lambda(t-s)|D\varphi(x,t)|^2.
\]
Dividing by $t-s$ and letting $s\uparrow t$, we obtain
\[
 \varphi_t(x,t)-\lambda|D\varphi(x,t)|^2+\frac{C}{t}\geq0.
\]
This proves \eqref{eq 1.3 differential Harnack} in the first-order
viscosity sense.
\end{proof}

\section{Normalized weighted \texorpdfstring{$p$}{p}-Laplacian and infinity heat flows}
\label{sec:homogeneous}

We prove Proposition~\ref{intro:prop-degree-one} directly. Theorems~\ref{thm:intro-normalized-p}
and~\ref{thm:intro-infinity} then follow by choosing the longitudinal
and transverse coefficients. This also makes clear why the infinity
heat estimate requires no curvature assumption.

\begin{proof}[Proof of Proposition~\ref{intro:prop-degree-one}]
Set $v=\log u$. The change of variables in
Section~\ref{sec:prelim} gives
\begin{equation}\label{np:log-equation}
 v_t=\mathcal Q_1[v]+A|\nabla v|^2
\end{equation}
in the viscosity sense. We retain
$\eta_N=\frac{1}{2}+\frac{(N-1)B}{2A}$ when $B>0$ and set
$\eta_N=\frac{1}{2}$ when $B=0$. We will prove
\begin{equation}\label{np:log-harnack}
 v(x,t)-v(y,s)+\eta_N\log\frac{t}{s}
                    +\frac{d(x,y)^2}{4A(t-s)}\geq0.
\end{equation}

For $\varepsilon>0$, define
\[
 h_\varepsilon(r)=\int_0^r\frac{w^2}{w+\varepsilon^2}\,dw.
\]
These functions increase to $\frac{r^2}{2}$ as
$\varepsilon\downarrow0$, and
\[
 h_\varepsilon'(r)=\frac{r^2}{r+\varepsilon^2},
\]
and
\[
 h_\varepsilon''(r)=1-\frac{\varepsilon^4}{(r+\varepsilon^2)^2}.
\]
Thus $0\leq h_\varepsilon''\leq1$,
$0<h_\varepsilon'(r)\leq r$ for $r>0$, and
$h_\varepsilon(0)=h_\varepsilon'(0)=h_\varepsilon''(0)=0$.
In particular, $h_\varepsilon(d(x,y))$ is $C^2$ near the
diagonal, with zero spatial gradient and Hessian there. We also have
\begin{equation}\label{np:kernel-error}
 0\leq2h_\varepsilon(r)-h_\varepsilon'(r)^2
                         \leq2\varepsilon^2r.
\end{equation}
The lower bound follows by differentiation, since
$2h_\varepsilon'(1-h_\varepsilon'')\geq0$. For the upper bound,
use $2h_\varepsilon(r)\leq r^2$ and
$0\leq r-h_\varepsilon'(r)\leq\varepsilon^2$.

Fix $0<b<T$. If \eqref{np:log-harnack} fails at a pair with
$0<s<t<b$, then, for all sufficiently small $\varepsilon$,
\begin{equation}\label{np:Z}
 Z_\varepsilon(x,t,y,s)
 =v(x,t)-v(y,s)+\eta_N\log\frac{t}{s}
 +\frac{h_\varepsilon(d(x,y))}{2A(t-s)}
 +\frac{\varepsilon}{b-t}
\end{equation}
takes a value below a fixed negative number. We show that it
attains an interior minimum, with $s$ and $t-s$ bounded below
independently of small $\varepsilon$.

The function $v$ is bounded and uniformly continuous on
$M\times[0,b]$, and every added term in \eqref{np:Z} is
nonnegative. On a fixed negative sublevel, $t-s\to0$ would
force $d(x,y)\to0$: for small $\varepsilon$, the kernels have
a common positive lower bound at every fixed positive distance.
Uniform continuity would then give
$v(x,t)-v(y,s)\to0$, a contradiction. Once $t-s$ is bounded
below, the logarithmic term excludes $s\downarrow0$. For each
fixed $\varepsilon$, the terminal term excludes $t\uparrow b$.
Compactness gives the asserted minimum and the uniform lower
bounds for $s$ and $t-s$.

Let $(x_0,t_0,y_0,s_0)$ be such a minimum. If $x_0=y_0$,
freezing the earlier endpoint gives the lower test
\[
 \varphi(x,t)=v(x_0,t_0)+\eta_N\log\frac{t_0}{t}
 -\frac{h_\varepsilon(d(x,x_0))}{2A(t-s_0)}
 +\frac{\varepsilon}{b-t_0}-\frac{\varepsilon}{b-t}
\]
at $(x_0,t_0)$. Its spatial gradient and Hessian vanish there,
so both critical-point envelopes in \eqref{np:log-equation}
vanish. But
\[
 \varphi_t(x_0,t_0)
 =-\frac{\eta_N}{t_0}-\frac{\varepsilon}{(b-t_0)^2}<0,
\]
contrary to the supersolution inequality. Hence $x_0\ne y_0$.

Choose a constant-speed minimizing geodesic
$\alpha:[s_0,t_0]\to M$ from $y_0$ to $x_0$ and a parallel
orthonormal frame $e_1,\ldots,e_n$, with
$e_n=\frac{\alpha'}{|\alpha'|}$. We use the variations
\[
 \alpha_i(r,\tau)
 =\exp_{\alpha(\tau)}\bigl(r\tau e_i(\tau)\bigr)
\]
for $s_0\leq\tau\leq t_0$ and $1\leq i\leq n$.
Their endpoint velocities are $s_0e_i(s_0)$ and
$t_0e_i(t_0)$. This is the weighted variation used
in~\cite{AndrewsClutterbuck2013,LiWang2017,LiTuWang2024},
with the times as weights.

One smooth upper support for distance realizes all these
variations. Let $P_{r,\tau}$ denote parallel transport along
$\alpha$. For small $U\in T_{y_0}M$ and $V\in T_{x_0}M$,
form the curve
\[
 \Gamma_{U,V}(\tau)=\exp_{\alpha(\tau)}\left(
 \frac{t_0-\tau}{t_0-s_0}P_{s_0,\tau}U
 +\frac{\tau-s_0}{t_0-s_0}P_{t_0,\tau}V\right).
\]
Its length defines a smooth function
$\widetilde d(\exp_{x_0}V,\exp_{y_0}U)$ near $(x_0,y_0)$,
because its speed remains nonzero for small displacements.
It majorizes distance and agrees with it at the contact.
Taking $U=rs_0e_i(s_0)$ and $V=rt_0e_i(t_0)$ gives exactly
$\alpha_i(r,\cdot)$. The same support therefore realizes all
the required variations, including at cut points. We keep it,
and its construction times, fixed when differentiating the
penalty in its time variables.

Let $\psi$ be the last three terms in \eqref{np:Z}, with
$d$ replaced by $\widetilde d$. Since $h_\varepsilon$ is
increasing, the contact remains a local minimum. In normal
coordinates based on the parallel endpoint frames,
Lemma~\ref{lemma:2.1 two-point-jets} gives
\begin{align}
 (-\psi_t,-D_x\psi,X)
 &\in\overline{\mathcal P}^{2,-}v(x_0,t_0),\notag\\
 (\psi_s,D_y\psi,Y)
 &\in\overline{\mathcal P}^{2,+}v(y_0,s_0),\notag\\
 \begin{pmatrix}-X&0\\0&Y\end{pmatrix}
 &\leq D^2_{x,y}\psi+\rho I_{2n}
 \label{np:jets}
\end{align}
for every $\rho>0$.

Write $t=t_0$, $s=s_0$, and $d=d(x_0,y_0)>0$. At the
contact, the endpoint gradients are
\[
 -D_x\psi=-\frac{h_\varepsilon'(d)}{2A(t-s)}e_n(t),
\]
and
\[
 D_y\psi=-\frac{h_\varepsilon'(d)}{2A(t-s)}e_n(s).
\]
They are nonzero, so the operator is continuous at these jets
and its viscosity inequalities pass to the closed jets.
Write
\[
 \mathcal L X=AX_{nn}+B\sum_{i<n}X_{ii},
\]
and define $\mathcal L Y$ in the same way. Differentiating the
time terms in the penalty gives
\begin{align}
 \mathcal L X+\frac{Bh_\varepsilon'(d)}{2Ad}(f\circ\alpha)'(t)
 &\leq-\frac{\eta_N}{t}-\frac{\varepsilon}{(b-t)^2}
 +\frac{2h_\varepsilon(d)-h_\varepsilon'(d)^2}{4A(t-s)^2},\notag\\
 \mathcal L Y+\frac{Bh_\varepsilon'(d)}{2Ad}(f\circ\alpha)'(s)
 &\geq-\frac{\eta_N}{s}
 +\frac{2h_\varepsilon(d)-h_\varepsilon'(d)^2}{4A(t-s)^2}.
 \label{np:endpoint-jets}
\end{align}
The drift signs are positive because both gradients point
along $-e_n$, while $\alpha'=\frac{d}{t-s}e_n$.

The longitudinal variation has length $d+r(t-s)$ for small
$r$. For $i<n$, the first length variation is zero. Since
the endpoint curves are geodesics, the second variation
formula gives
\[
 \sum_{i<n}\left.\frac{d^2}{dr^2}
 \operatorname{Length}(\alpha_i(r,\cdot))\right|_{r=0}
 =\frac{t-s}{d}\int_s^t
 \bigl((n-1)-\tau^2\Ric(\alpha',\alpha')\bigr)\,d\tau.
\]
Contract the last inequality in \eqref{np:jets} against
$(te_i(t),se_i(s))$, with the nonnegative weights $A$ for
$i=n$ and $B$ for $i<n$. Choose $\rho$ after the contact
is fixed so that
\[
 \rho(t^2+s^2)\bigl(A+(n-1)B\bigr)\leq\varepsilon^2.
\]
Then
\begin{align}
 t^2\mathcal L X-s^2\mathcal L Y
 &\geq-\frac{t-s}{2}h_\varepsilon''(d)-\varepsilon^2\notag\\
 &\quad-\frac{Bh_\varepsilon'(d)}{2Ad}
 \int_s^t\bigl((n-1)-\tau^2\Ric(\alpha',\alpha')\bigr)\,d\tau.
 \label{np:second-variation}
\end{align}

When $B>0$ and $N>n$, integration by parts and the weighted
Ricci bound give
\begin{align*}
 &\int_s^t\tau^2\Ric(\alpha',\alpha')\,d\tau
       +[\tau^2(f\circ\alpha)']_s^t\\
 &\quad=\int_s^t\left(
 \tau^2\Ric_f^N(\alpha',\alpha')
 +\frac{\bigl(\tau(f\circ\alpha)'+N-n\bigr)^2}{N-n}
 -(N-n)\right)\,d\tau\\
 &\quad\geq-(N-n)(t-s).
\end{align*}
When $B>0$ and $N=n$, the same bound follows because $f$ is constant
and $\Ric\geq0$. Combining it with
\eqref{np:second-variation}, and using
$h_\varepsilon''\leq1$ and $h_\varepsilon'(d)\leq d$, gives
\[
 t^2\mathcal L X-s^2\mathcal L Y
 +\frac{Bh_\varepsilon'(d)}{2Ad}
       [\tau^2(f\circ\alpha)']_s^t
 \geq-\eta_N(t-s)-\varepsilon^2.
\]
When $B=0$, this inequality follows directly from the
longitudinal term in \eqref{np:second-variation}, with
$\eta_N=\frac{1}{2}$. No curvature bound is used.

Multiplying the two inequalities in \eqref{np:endpoint-jets}
by $t^2$ and $s^2$, respectively, and subtracting gives an
upper bound for the same expression. The drift terms and
$-\eta_N(t-s)$ cancel. By \eqref{np:kernel-error},
\begin{align*}
 0&\leq-\frac{\varepsilon t^2}{(b-t)^2}
 +\frac{t+s}{4A(t-s)}
       \bigl(2h_\varepsilon(d)-h_\varepsilon'(d)^2\bigr)
 +\varepsilon^2\\
 &\leq-\frac{\varepsilon t^2}{(b-t)^2}
 +\varepsilon^2\left(1+\frac{d(t+s)}{2A(t-s)}\right).
\end{align*}
The coefficient of $\varepsilon^2$ is uniformly bounded,
since $d\leq\operatorname{diam}M$, $t<b$, and $t-s$ stays
away from zero. The coefficient $\frac{t^2}{(b-t)^2}$ has
a uniform positive lower bound. Division by $\varepsilon$
and passage to the limit give a contradiction. No lower
bound for the contact distance is needed: it is positive
for each $\varepsilon$, and the matrix error is chosen
afterward. Since $b<T$ was arbitrary,
\eqref{np:log-harnack} holds for all $0<s<t<T$.
Exponentiating proves the Harnack inequality in the proposition.

To obtain the differential inequality, let a $C^1$ function
$\varphi$ touch $v$ from below at $(x,t)$. Apply
\eqref{np:log-harnack} with
\[
 y=\exp_x\bigl(2A(t-s)\nabla\varphi(x,t)\bigr)
\]
for $s<t$ close to $t$. The lower contact permits us to
replace $v(y,s)$ by $\varphi(y,s)$ while keeping
$v(x,t)=\varphi(x,t)$. Dividing by $t-s$ and letting
$s\uparrow t$ gives
\[
 \varphi_t-A|\nabla\varphi|^2+\frac{\eta_N}{t}\geq0.
\]
This proves the first-order viscosity conclusion, including
at critical points of the test function.
\end{proof}

\begin{proof}[Proof of Theorem~\ref{thm:intro-normalized-p}]
Take $A=p-1$ and $B=1$ in
Proposition~\ref{intro:prop-degree-one}. Then
$\mathcal Q_1=\Delta_{p,f}^{\mathrm N}$ and
\[
 \eta_N=\frac{1}{2}+\frac{N-1}{2(p-1)}
       =\frac{N+p-2}{2(p-1)}.
\]
The proposition gives \eqref{intro:normalized-p-harnack}
and \eqref{intro:normalized-p-differential}. The
critical-point envelopes agree by the conventions in
Section~\ref{sec:prelim}.
\end{proof}

\begin{proof}[Proof of Theorem~\ref{thm:intro-infinity}]
Take $A=1$ and $B=0$. Then
$\mathcal Q_1=\Delta_\infty^{\mathrm N}$, with the same
critical-point envelopes, and $\eta_N=\frac{1}{2}$.
Proposition~\ref{intro:prop-degree-one} gives
\eqref{intro:infinity-harnack} and
\eqref{intro:infinity-differential} on every closed,
connected Riemannian manifold.
\end{proof}

\section{Homogeneous diffusion of degree \texorpdfstring{$\gamma\ne1$}{gamma not equal to one}}
\label{sec:homogeneous-power}

In this section we prove Theorem~\ref{intro:homogeneous-pressure}
and its specialization to the weighted $p$-Laplacian. The pressure
transformation replaces the logarithmic transformation used in
Section~\ref{sec:homogeneous}. The corresponding variations of the
minimizing geodesic have weights given by powers of time.

\begin{proof}[Proof of Theorem~\ref{intro:homogeneous-pressure}]
For this proof, set $r=1-(\gamma-1)\kappa$. The parameter
condition in the statement implies $r>0$, and we have
\begin{equation}\label{gneq1:cancellation}
 \frac{r}{\gamma+1}\left(1+\frac{(N-1)\gamma B}{A}\right)
 =\kappa,\quad r-1+\gamma\theta\kappa=0.
\end{equation}
Evaluating the integral in the definition of $\Lambda$ gives
\[
 \Lambda(s,t)=\left(
 \frac{\gamma r}{A(\gamma+1)(t^r-s^r)}
 \right)^{\frac{1}{\gamma}}.
\]
When $0<\gamma<1$, we first replace $u(x,t)$ by
$u(x,t+\tau_0)$ for a fixed $\tau_0>0$, retaining the notation
$u,v,T$ on the translated interval. By
Lemma~\ref{lemma:singular-regularity}, the translated pressure is
Lipschitz in space and time up to the new initial time on every
compact time interval. We remove this translation at the end of
the proof. No translation is needed when $\gamma>1$.

The change of variables in subsection~\ref{subsec:regularity} gives
\begin{equation}\label{gneq1:pressure}
 v_t=\theta v\mathcal Q_\gamma[v]
             +\frac{A}{\gamma}|\nabla v|^{\gamma+1},
 \quad \theta v=u^\theta>0.
\end{equation}
This equation holds in the viscosity sense, including the
admissible critical-point tests in the singular range. Set
$W(x,t)=t^{\theta\kappa}v(x,t)$. By
\eqref{gneq1:cancellation}, its equation is
\begin{equation}\label{gneq1:weighted-equation}
 W_t=\theta t^{r-1}W\mathcal Q_\gamma[W]
       +\frac{A}{\gamma}t^{r-1}|\nabla W|^{\gamma+1}
       +\frac{\theta\kappa}{t}W.
\end{equation}
The coefficient $\theta W$ is positive in both ranges of $\gamma$.

We regularize the distance cost by defining
\begin{equation}\label{gneq1:kernel}
 h_\varepsilon(\ell)
 =\int_0^\ell\frac{w^{\frac{\gamma+1}{\gamma}}}
                         {w+\varepsilon^2}\,dw.
\end{equation}
As $\varepsilon\downarrow0$, these functions increase to
$\frac{\gamma}{\gamma+1}\ell^{\frac{\gamma+1}{\gamma}}$
uniformly on bounded intervals. They satisfy
\begin{equation}\label{gneq1:kernel-bounds}
\begin{aligned}
 0&<(h_\varepsilon')^{\gamma-1}h_\varepsilon''
       \leq\frac{1}{\gamma},\\
 0&<(h_\varepsilon')^\gamma\leq\ell,\\
 0&\leq h_\varepsilon(\ell)
       -\frac{\gamma}{\gamma+1}h_\varepsilon'(\ell)^{\gamma+1}
       \leq\gamma\varepsilon^2\ell^{\frac{1}{\gamma}},
\end{aligned}
\end{equation}
for $\ell>0$. Indeed,
\[
 \frac{d}{d\ell}(h_\varepsilon'(\ell))^\gamma
 =\left(\frac{\ell}{\ell+\varepsilon^2}\right)^\gamma
     \left(1+\frac{\gamma\varepsilon^2}{\ell+\varepsilon^2}\right)
 \leq1.
\]
The last inequality follows by applying the increasing function
$x^\gamma(1+\gamma-\gamma x)$ on $[0,1]$ to
$x=\frac{\ell}{\ell+\varepsilon^2}$. It also shows that the
difference on the last line of \eqref{gneq1:kernel-bounds}
is increasing from zero. To obtain its upper bound, use
\[
 h_\varepsilon(\ell)
 \leq\frac{\gamma}{\gamma+1}\ell^{\frac{\gamma+1}{\gamma}},
\]
and
\[
 0\leq\ell-(h_\varepsilon')^\gamma\leq\gamma\varepsilon^2,
\]
together with the mean value theorem. For fixed $\varepsilon>0$,
$h_\varepsilon(\ell)=O(\ell^{2+\frac{1}{\gamma}})$ near zero.
Thus $h_\varepsilon(d)$ is $C^2$ near the diagonal, with zero
gradient and Hessian there. Moreover,
\[
 (h_\varepsilon')^{\gamma-1}
 \left(h_\varepsilon''+\frac{h_\varepsilon'}{\ell}\right)
 \longrightarrow0
\]
as $\ell\downarrow0$. The flat tests constructed from this kernel are therefore
admissible also when $0<\gamma<1$.

Fix a finite time $b<T$. Suppose that
\eqref{intro:general-pressure-harnack} fails for some
$0<s<t<b$. For all sufficiently small $\varepsilon>0$,
\begin{equation}\label{gneq1:Z}
 Z_\varepsilon(x,t,y,s)
 =W(x,t)-W(y,s)+\Lambda(s,t)h_\varepsilon(d(x,y))
                         +\frac{\varepsilon}{b-t}
\end{equation}
then has a value below a fixed negative number. We first show
that it attains such a minimum in the interior of the time
triangle, with $s$ and $t-s$ bounded below independently of
small $\varepsilon$.

For $\gamma>1$, $W>0$ extends continuously to zero at time zero.
Since $Z_\varepsilon\geq-W(y,s)$, a fixed negative sublevel
stays away from $s=0$. If $t-s\to0$ on this sublevel, the
boundedness of $W$ and $\Lambda(s,t)\to\infty$ force
$d(x,y)\to0$. Here the convergence of $h_\varepsilon$ is
uniform, so the assertion holds uniformly for small
$\varepsilon$. Uniform continuity of $W$ now contradicts
the fixed negative bound. For each fixed $\varepsilon$, the
terminal penalty excludes $t=b$, and compactness gives the
required interior minimum.

For $0<\gamma<1$, we must also control the joint initial
corner. Let $L$ be a common spatial and temporal Lipschitz
constant for $v$ on $M\times[0,b]$. Since
$\theta\kappa<0$ and $r>1$, we have
\[
 \left(\frac{s}{t}\right)^{\theta\kappa}-1
 \geq-\theta\kappa\frac{t-s}{t},
\]
and
\[
 \frac{\Lambda(s,t)}{t^{\theta\kappa}}
 \geq\left(\frac{\gamma}{A(\gamma+1)(t-s)}
           \right)^{\frac{1}{\gamma}}.
\]
For the second inequality we used
$t^r-s^r\leq rt^{r-1}(t-s)$ and
$r-1=-\gamma\theta\kappa$. The spatial and temporal
Lipschitz bounds give a contribution at least
$-Ld(x,y)-L(t-s)$. Minimizing the distance terms over
$d(x,y)\geq0$ gives
\begin{align}
 &t^{-\theta\kappa}\left(W(x,t)-W(y,s)
 +\frac{\gamma}{\gamma+1}\Lambda(s,t)
                   d(x,y)^{\frac{\gamma+1}{\gamma}}\right)
       \notag\\
 &\quad\geq(t-s)\left(
       \frac{-\theta\kappa\min(-v)}{t}
                    -L-\frac{A}{\gamma}L^{\gamma+1}\right).
       \label{gneq1:initial-corner}
\end{align}
The minimum of $-v$ is positive on $M\times[0,b]$.
Choose $0<t_*<b$ so that the right-hand side is nonnegative
for $0<s<t\leq t_*$, and minimize $Z_\varepsilon$ over
$t\geq t_*$. On this region $W(x,t)$ is bounded below,
whereas $-W(y,s)\to+\infty$ uniformly as $s\downarrow0$.
A fixed negative sublevel consequently stays away from $s=0$.
The preceding distance-cost argument then bounds $t-s$ away
from zero. These bounds make the regularized cost converge
uniformly on that sublevel. By \eqref{gneq1:initial-corner},
the face $t=t_*$ cannot contain it for small $\varepsilon$.
The terminal penalty and compactness give an interior
minimum with $t>t_*$, which is also a local minimum in the
full time triangle.

Write $(x_0,t_0,y_0,s_0)$ for the resulting minimum. It cannot
lie on the spatial diagonal. Otherwise, freezing the earlier
endpoint gives the lower test
\[
 \varphi(x,t)=W(x_0,t_0)-\Lambda(s_0,t)h_\varepsilon(d(x,x_0))
       +\frac{\varepsilon}{b-t_0}-\frac{\varepsilon}{b-t}
\]
at $(x_0,t_0)$. Its spatial gradient and Hessian vanish,
and the test is admissible in the singular range. Equation
\eqref{gneq1:weighted-equation} would therefore imply
\[
 -\frac{\varepsilon}{(b-t_0)^2}
 =\varphi_t(x_0,t_0)
 \geq\frac{\theta\kappa}{t_0}W(x_0,t_0)>0,
\]
a contradiction.

For the rest of the calculation, write $s=s_0$, $t=t_0$,
$d=d(x_0,y_0)>0$, and $\Lambda=\Lambda(s,t)$. Parametrize a minimizing
geodesic $\alpha:[s,t]\to M$ from $y_0$ to $x_0$ so that
\[
 |\alpha'(\tau)|=\frac{dr\tau^{r-1}}{t^r-s^r},
\]
and a parallel orthonormal frame $e_1,\ldots,e_n$ for which
$e_n=\frac{\alpha'}{|\alpha'|}$. We use the variations
\begin{equation}\label{gneq1:variations}
 \alpha_i(\ell,\tau)
 =\exp_{\alpha(\tau)}\bigl(\ell\tau^r e_i(\tau)\bigr),
\end{equation}
for $1\leq i\leq n$.

To treat cut points, we construct a single smooth upper
support for distance realizing all these variations. Divide
the normal-coordinate vectors of nearby endpoints by $s^r$
and $t^r$, parallel transport them along $\alpha$, and
interpolate affinely in $\tau$. Multiply the resulting vector
field by $\tau^r$, exponentiate along $\alpha$, and take the
length of this curve. For sufficiently small endpoint
displacements, its speed stays nonzero, so its length is
smooth. The length majorizes distance and equals $d$ at the
contact. On each pair of endpoint curves in
\eqref{gneq1:variations}, it equals the length of
$\alpha_i(\ell,\cdot)$. We keep this support fixed when
differentiating in the endpoint times.

Let $\psi$ be the sum of the last two terms of
\eqref{gneq1:Z}, with distance replaced by this upper
support. Since $h_\varepsilon$ is increasing, the local
minimum is preserved. Lemma~\ref{lemma:2.1 two-point-jets}
gives, for every $\rho>0$, matrices $X,Y$ such that
\begin{align}
 (-\psi_t,-D_x\psi,X)
       &\in\overline{\mathcal P}^{2,-}W(x_0,t),
       \label{gneq1:jets}\\
 (\psi_s,D_y\psi,Y)
       &\in\overline{\mathcal P}^{2,+}W(y_0,s),\notag\\
 \begin{pmatrix}-X&0\\0&Y\end{pmatrix}
       &\leq D^2_{x,y}\psi+\rho I_{2n}.\notag
\end{align}
All spatial derivatives are taken in normal coordinates at
the endpoints. Put $q=\Lambda h_\varepsilon'(d)>0$, and evaluate
all kernel derivatives below at $d$. The first variation gives
$-D_x\psi=-qe_n(t)$ and $D_y\psi=-qe_n(s)$. Thus the
viscosity inequalities apply to these closed jets, since
their gradients are nonzero.

The time derivatives of the cost satisfy
\begin{align*}
 &-\partial_t\bigl(\Lambda(s,t)h_\varepsilon(d)\bigr)
      -\frac{A}{\gamma}t^{r-1}
                       \bigl(\Lambda(s,t)h_\varepsilon'(d)\bigr)^{\gamma+1}\\
 &\quad=\frac{\Lambda(s,t)rt^{r-1}}{\gamma(t^r-s^r)}
       \left(h_\varepsilon(d)
        -\frac{\gamma}{\gamma+1}h_\varepsilon'(d)^{\gamma+1}\right).
\end{align*}
For the earlier endpoint, replace $-\partial_t$ by
$\partial_s$ and $t^{r-1}$ by $s^{r-1}$. Both remainders
are nonnegative and are $O(\varepsilon^2)$ uniformly at the
contacts by \eqref{gneq1:kernel-bounds} and the lower bounds
for $s$ and $t-s$.

For brevity, denote the two endpoint operators by
\[
 E_t=q^{\gamma-1}\left(AX_{nn}+B\sum_{i<n}X_{ii}\right)
                       +Bq^\gamma f_n(t),
\]
and
\[
 E_s=q^{\gamma-1}\left(AY_{nn}+B\sum_{i<n}Y_{ii}\right)
                       +Bq^\gamma f_n(s),
\]
where $f_n(\tau)=\langle\nabla f,e_n\rangle_{\alpha(\tau)}$,
$X_{ii}=X(e_i(t),e_i(t))$, and
$Y_{ii}=Y(e_i(s),e_i(s))$. The drift has a plus sign
because both gradients point along $-e_n$. Dividing the
viscosity inequalities by the positive diffusion coefficients
in \eqref{gneq1:weighted-equation} gives
\begin{align}
 E_t&\leq-\frac{\kappa}{t^r}
       -\frac{\varepsilon}
        {\theta t^{r-1}W(x_0,t)(b-t)^2}
       +O(\varepsilon^2),\label{gneq1:endpoint-jets}\\
 E_s&\geq-\frac{\kappa}{s^r}.\notag
\end{align}
We discarded the nonnegative remainder at the earlier
endpoint. The error at the later endpoint remains uniform
because $\theta W$ is bounded away from zero on the
contact region.

We next estimate the same operators by the second variation
of length. The longitudinal length in
\eqref{gneq1:variations} is $d+\ell(t^r-s^r)$ for small
$\ell$. The transverse first variations vanish, and the sum
of the transverse second variations is
\[
 \int_s^t\left((n-1)\frac{r^2\tau^{2r-2}}{|\alpha'|}
          -\tau^{2r}|\alpha'|\Ric(e_n,e_n)\right)\,d\tau.
\]
Contract the matrix inequality in \eqref{gneq1:jets}
against $(t^re_i(t),s^re_i(s))$, with weight $A$ for
$i=n$ and $B$ for $i<n$, and multiply by $q^{\gamma-1}$.
Choose $\rho>0$ so that
\[
 \rho q^{\gamma-1}(t^{2r}+s^{2r})
                  \bigl(A+(n-1)B\bigr)\leq\varepsilon^2.
\]
This choice is possible at every nonzero contact gradient,
including when $\gamma<1$. We obtain
\begin{align}
 &q^{\gamma-1}\left[
 t^{2r}\left(AX_{nn}+B\sum_{i<n}X_{ii}\right)
 -s^{2r}\left(AY_{nn}+B\sum_{i<n}Y_{ii}\right)\right]
       \notag\\
 &\quad\geq-A\Lambda^\gamma(h_\varepsilon')^{\gamma-1}
                       h_\varepsilon''(t^r-s^r)^2\notag\\
 &\quad{}-Bq^\gamma\int_s^t\left(
 (n-1)\frac{r^2\tau^{2r-2}}{|\alpha'|}
            -\tau^{2r}|\alpha'|\Ric(e_n,e_n)\right)\,d\tau
       -\varepsilon^2.\label{gneq1:second-variation}
\end{align}

The curvature and drift terms combine through
\[
 (\tau^{2r}f_n)'=2r\tau^{2r-1}f_n
                  +\tau^{2r}|\alpha'|\nabla^2f(e_n,e_n).
\]
For $N>n$, the assumption $\Ric_f^N\geq0$ and completion
of the square give
\begin{align*}
 \int_s^t\tau^{2r}|\alpha'|\Ric(e_n,e_n)\,d\tau
                         +[\tau^{2r}f_n]_s^t
 &\geq\int_s^t\left(
       \frac{\tau^{2r}|\alpha'|}{N-n}f_n^2
                   +2r\tau^{2r-1}f_n\right)\,d\tau\\
 &\quad\geq-(N-n)\int_s^t
                     \frac{r^2\tau^{2r-2}}{|\alpha'|}\,d\tau.
\end{align*}
When $N=n$, the same conclusion follows from $f$ being
constant and $\Ric\geq0$. Since
\[
 \int_s^t\frac{r^2\tau^{2r-2}}{|\alpha'|}\,d\tau
                         =\frac{(t^r-s^r)^2}{d},
\]
we may add the drift boundary term to
\eqref{gneq1:second-variation} and use
\eqref{gneq1:kernel-bounds} to conclude that
\begin{align}
 t^{2r}E_t-s^{2r}E_s
 &\geq-\Lambda^\gamma\left(\frac{A}{\gamma}+(N-1)B\right)
                       (t^r-s^r)^2-\varepsilon^2\notag\\
 &=-\kappa(t^r-s^r)-\varepsilon^2.
                       \label{gneq1:geometric-bound}
\end{align}
The last identity uses the definition of $\Lambda$ and the first
identity in \eqref{gneq1:cancellation}. If $B=0$, all transverse and drift
terms vanish, and this calculation uses no curvature
assumption.

Multiplying the two inequalities in
\eqref{gneq1:endpoint-jets} by $t^{2r}$ and $s^{2r}$,
respectively, and subtracting, we compare the resulting
upper bound with \eqref{gneq1:geometric-bound}. The common
term $-\kappa(t^r-s^r)$ cancels, leaving
\[
 0\leq-\frac{\varepsilon t^{r+1}}
                 {\theta W(x_0,t)(b-t)^2}+O(\varepsilon^2).
\]
The coefficient of $-\varepsilon$ has a uniform positive
lower bound: $t$ stays away from zero, $\theta W$ is
bounded above on the contact region, and $t<b$. Division
by $\varepsilon$ and passage to the limit give a
contradiction. Since the finite time $b<T$ was arbitrary,
\eqref{intro:general-pressure-harnack} follows. For
$0<\gamma<1$, apply the translated comparison at times
$s-\tau_0,t-\tau_0$ for $0<\tau_0<s$, and let
$\tau_0\downarrow0$ to recover the original solution.

It remains to obtain the differential inequality. Let a
$C^1$ function $\varphi$ touch $W$ from below at $(x,t)$.
With this later endpoint fixed, the comparison remains
valid when $W$ is replaced by $\varphi$ at nearby earlier
endpoints. Choose
\[
 y=\exp_x\left(\frac{A(\gamma+1)}{\gamma}t^{r-1}(t-s)
       |\nabla\varphi(x,t)|^{\gamma-1}\nabla\varphi(x,t)\right),
\]
where the vector is zero when the gradient vanishes.
Divide by $t-s$ and let $s\uparrow t$. The definition
of $\Lambda$ gives
\[
 \varphi_t-\frac{A}{\gamma}t^{r-1}
                   |\nabla\varphi|^{\gamma+1}\geq0.
\]
Undoing $W=t^{\theta\kappa}v$ and using
$r-1+\gamma\theta\kappa=0$ proves the first-order
viscosity inequality in the theorem. At a classical
noncritical point, \eqref{gneq1:pressure} identifies its
left-hand side with
$\theta v\left(\mathcal Q_\gamma[v]+\frac{\kappa}{t}\right)$.
The stated equivalence follows from $\theta v>0$.
\end{proof}

For $B=0$, $\kappa=\frac{1}{2\gamma}$, so the range condition
holds for every $\gamma>0$ with $\gamma\ne1$. The singular
regularity lemma also requires no curvature assumption in this case.

\begin{proof}[Proof of Theorem~\ref{thm:intro-weighted-p}]
The weighted $p$-Laplacian corresponds to
$\gamma=A=p-1$ and $B=1$. Consequently,
\[
 \kappa=\frac{N}{p+N(p-2)},\quad
 r=\frac{p}{p+N(p-2)}.
\]
The range condition in Theorem~\ref{intro:homogeneous-pressure}
is precisely $p>\frac{2N}{N+1}$. Substitution in that
theorem gives both \eqref{intro:weighted-p-harnack} and
\eqref{intro:weighted-p-differential}, together with the
classical formulation at noncritical points.
\end{proof}

The relation with the logarithmic case can also be seen
formally from the pressure comparison. As $\gamma\to1$,
\[
 t^{\theta\kappa}v
 =\frac{1}{\theta}+\log u+\kappa\log t+o(1),
\]
and
\[
 \kappa\longrightarrow\frac{1}{2}+\frac{(N-1)B}{2A}.
\]
The divergent constants cancel between the two endpoints, and
$\Lambda(s,t)\to\frac{1}{2A(t-s)}$. Thus the limiting cost and
time weight agree with Proposition~\ref{intro:prop-degree-one}.

\section{Negative curvature lower bounds}
\label{sec:negative-curvature}

A negative lower bound for the weighted Ricci tensor produces a
curvature term involving both the pressure and its gradient. To
control this term, we change the coefficient of the gradient term
and replace the power of time by a hyperbolic time weight. The
constants then depend on the initial amplitude. This gives the
following result in both the degenerate and singular ranges.

\begin{theorem}\label{intro:negative-pressure}
Let $(M^n,g,e^{-f}d\mu_g)$ be closed and connected, with
$\Ric_f^N\geq-Kg$ for $K\geq0$ and $n\leq N<\infty$,
and with $f$ constant when $N=n$. Let
$u\in C(M\times[0,T))$ be a positive viscosity solution of
$u_t=\Delta_{p,f}u$, for $p>\frac{2N}{N+1}$ and $p\ne2$.
Set
\[
 \theta=\frac{p-2}{p-1},\quad
 v=\frac{u^\theta}{\theta},\quad
 U=\max_{x\in M}u(x,0)^{p-2}.
\]
Choose $\delta<1$ with $\delta(p-2)>0$, sufficiently close to zero
that $\kappa_\delta$ below is positive, and define
\begin{align*}
 a&=1-\delta,\quad
 c_\delta=\frac{a\delta}{2}
           \left(p-2+\frac{p\delta}{N}\right),\\
 \kappa_\delta
 &=\left(p-2+\frac{pa}{N}
           -\frac{p^2a^2\delta^2}{N^2c_\delta}\right)^{-1},\\
 \omega
 &=\left(\frac{a^p(p-1)^{p-1}}
              {\kappa_\delta c_\delta^{p-1}}\right)^{\frac{1}{2}}
       K^{\frac{p}{2}}U.
\end{align*}
Let $S(t)=\frac{\sinh(\omega t)}{\omega}$ when $\omega>0$,
and $S(t)=t$ when $\omega=0$. Put
\[
 \mu(t)=S(t)^{\theta\kappa_\delta},\quad
 k_\delta(s,t)=\left(ap\int_s^t\mu(\tau)^{1-p}\,d\tau
        \right)^{-\frac{1}{p-1}}.
\]
Then
\begin{equation}\label{intro:negative-pressure-harnack}
 \mu(t)v(x,t)-\mu(s)v(y,s)
 +\frac{p-1}{p}k_\delta(s,t)d(x,y)^{\frac{p}{p-1}}\geq0
\end{equation}
for all $x,y\in M$ and $0<s<t<T$, and
\[
 v_t-a|\nabla v|^p
       +\theta\kappa_\delta\frac{S'(t)}{S(t)}v\geq0.
\]
At a classical noncritical point, this is equivalent to
\[
 \Delta_{p,f}v+\delta\frac{|\nabla v|^p}{\theta v}
       \geq-\kappa_\delta\frac{S'(t)}{S(t)}.
\]
\end{theorem}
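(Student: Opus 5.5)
Follow the proof of Theorem~\ref{intro:homogeneous-pressure} with $\gamma=A=p-1$, $B=1$, modified in three ways: the time weight $t^{\theta\kappa}$ becomes $\mu(t)=S(t)^{\theta\kappa_\delta}$, the Hamiltonian coefficient $\frac{A}{\gamma}=1$ is lowered to $a=1-\delta$, and the slack $\delta|\nabla v|^p$ absorbs the curvature error. Start from the pressure equation \eqref{gneq1:pressure}, written as
\[
 v_t=\theta v\Delta_{p,f}v+a|\nabla v|^p+\delta|\nabla v|^p,
\]
and set $W=\mu(t)v$. The transformed equation has the form \eqref{gneq1:weighted-equation}, with the diffusion coefficient $\theta\mu^{-1}W$ positive in both ranges and with the extra zeroth-order term $\theta\kappa_\delta\frac{S'}{S}W$. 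Use the amplitude bounds from Section~\ref{sec:prelim}, $\theta v=u^\theta$ and $u\le\max u(\cdot,0)$, to obtain $(\theta v)^{-1}\cdot(\text{const})\ge U^{-1}$-type control. This is the only place where $U$ enters. For $p<2$, first translate in time and invoke Lemma~\ref{lemma:singular-regularity}, which holds without any curvature sign, to get Lipschitz bounds and to handle the initial corner exactly as in \eqref{gneq1:initial-corner}.

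Then consider $Z_\varepsilon=W(x,t)-W(y,s)+k_\delta(s,t)h_\varepsilon(d)+\frac{\varepsilon}{b-t}$ with the kernel \eqref{gneq1:kernel}. The interior-minimum and off-diagonal arguments carry over, since $S'/S>0$ keeps the sign of the zeroth-order term. Parametrize the geodesic with speed proportional to $\mu^{1-p}$, the analogue of $\tau^{r-1}$, and use the variations $\exp(\ell\,\phi(\tau)e_i)$ with weight $\phi(\tau)$ chosen so that $\phi'$ matches this speed, i.e.\ $\phi=\mu^{1-p}$ up to normalization. Build the same smooth upper distance support. Lemma~\ref{lemma:2.1 two-point-jets} gives the two endpoint inequalities, and the $a$-choice of $k_\delta$ makes the first-order time derivative of the cost cancel $a|q|^p$ up to $O(\varepsilon^2)$.

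The curvature term is the main obstacle. Completing the square with $\Ric_f^N\ge-Kg$ now leaves an extra term $+K\int\phi^2|\alpha'|q^{p-1}d\tau$. Bound it pointwise by Young's inequality, splitting it into a part absorbed by the slack $\delta q^p/(\theta v)$ at each endpoint and a part quadratic in the weight. Handling this via the longitudinal-transverse budget is what dictates $c_\delta$ and the correction $-\frac{p^2a^2\delta^2}{N^2c_\delta}$ in $\kappa_\delta^{-1}$. The remaining term is proportional to $K^pU^2$ times the weight squared. It must be cancelled by the hyperbolic time weight through the ODE $(S'/S)'=\omega^2-(S'/S)^2$, i.e.\ $S''=\omega^2 S$; matching coefficients should force the stated $\omega^2=a^p(p-1)^{p-1}K^pU^2/(\kappa_\delta c_\delta^{p-1})$. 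I expect the bookkeeping here to be the delicate step. I would first do it classically at a noncritical point, by the maximum principle on $\mathcal Q[v]+\delta|\nabla v|^p/(\theta v)$, to identify the constants, and then transplant it into the two-point weights.

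The last step is the differential inequality. With the later endpoint fixed, take $y=\exp_x\bigl(ap\,\mu^{1-p}(t-s)|\nabla\varphi|^{p-2}\nabla\varphi\bigr)$, divide by $t-s$, and undo $W=\mu v$ using $\mu'/\mu=\theta\kappa_\delta S'/S$. The classical reformulation then follows from the pressure equation and $\theta v>0$.
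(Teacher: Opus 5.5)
Your framework matches the paper's. That includes the weighted pressure $W=\mu v$, the cost $k_\delta h_\varepsilon$ whose time derivative cancels $a\mu^{1-p}q^p$ up to $O(\varepsilon^2)$, and the geodesic speed proportional to $\mu^{1-p}$. It also includes the localization, the Riccati equation for $S'/S$, and the final first-order step. The gap is in the step you defer as bookkeeping, which is the heart of the proof; the mechanism you describe for it does not work.

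The term $\delta q^p/(\theta W)$ is available only at the two contact points. After you multiply the endpoint inequalities by the squared weights and subtract, it appears as a boundary difference $-\delta\bigl[\phi^2q^p/(\theta W)\bigr]_s^t$, whose sign is indefinite. The earlier endpoint works against you when $p>2$, and the later one when $p<2$. In the singular range $\delta<0$, so $\delta|\nabla v|^p$ is not slack at all. The curvature error $K\int_s^t\phi^2|\alpha'|q^{p-1}\,d\tau$, by contrast, is an integral over interior times. Your setup has no positive quantity at each $\tau$ on which a pointwise Young inequality could act. Bounds on $\theta v$ at the two endpoints do not control it.

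The missing idea is to interpolate along the geodesic, and to let the weight depend on the contact.
\begin{itemize}
\item The paper takes $V(\tau)$ to be the interpolation of $\theta W(y_0,s)$ and $\theta W(x_0,t)$ that is affine in $\int_s^\tau\mu^{1-p}$, and puts $P=\mu^{1-p}q^p/V$.
\item With $I=\int_s^t\mu^{1-p}\,d\tau$, the kernel bound gives $k_\delta h_\varepsilon\geq a(p-1)Iq^p$. Since $Z_\varepsilon<0$ forces $W(x_0,t)-W(y_0,s)<-k_\delta h_\varepsilon$, multiplying by $\delta\theta>0$ gives $-\delta PV'/V\geq(p-2)\delta aP^2>0$ in both ranges. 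This derivative term, not the sign of $\delta$, supplies the absorbing quantity.
\item Because $V$ and $\mu$ are monotone in opposite directions, $V/\mu\leq U^{1/(p-1)}$ along the whole path, not just at the endpoints. This is where $U$ enters: it bounds the curvature term by $apKU^{2/p}\int\chi^2\mu^{p-1}P^{2(p-1)/p}\,d\tau+O(\varepsilon^2)$.
\item After the longitudinal Cauchy--Schwarz bound, one needs a lower bound for $[\phi^2\mu^{p-1}(z+\delta P)]'-\frac{N}{ap}\mu^{p-1}(\phi')^2$, where $z=\kappa_\delta S'/S$. This equals $\phi^2\bigl([\mu^{p-1}(z+\delta P)]'+\frac{ap}{N}\mu^{p-1}(z+\delta P)^2\bigr)-\frac{N}{ap}\mu^{p-1}\bigl(\phi'-\frac{ap}{N}(z+\delta P)\phi\bigr)^2$.
\item The paper's weight $\chi'/\chi=\frac{ap}{N}(z+\delta P)$ kills the square. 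The interpolation estimate and a completed square then bound the rest below by $\chi^2\mu^{p-1}(\kappa_\delta\omega^2+c_\delta P^2)$. This is exactly where $c_\delta$ and the correction $-\frac{p^2a^2\delta^2}{N^2c_\delta}$ in $\kappa_\delta^{-1}$ come from.
\item The Young step $apKU^{2/p}P^{2(p-1)/p}-c_\delta P^2\leq\kappa_\delta\omega^2$ then fixes $\omega$.
\end{itemize}

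Your speed-matched weight cannot replace $\chi$. You write $\phi'\propto\mu^{1-p}$, which contradicts your ``i.e.\ $\phi=\mu^{1-p}$''; either reading gives a weight independent of the contact. It kills the square only in the limiting case $\delta=0$, $S(t)=t$, which is the setting of Theorem~\ref{thm:intro-weighted-p}; there it is $\tau^r$. For $K>0$, $S'$ is not constant and $\delta P\neq0$, so the square is positive. With the stated $\kappa_\delta$ and $\omega$, the final inequality is exactly balanced at the extremal $P$ of the Young step, so this loss cannot be absorbed. A classical computation on $\mathcal Q[v]+\delta|\nabla v|^p/(\theta v)$ may suggest the constants. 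It does not transfer to the two-point comparison without $V$ and the contact-dependent $\chi$.
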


The amplitude $U$ depends on the initial maximum for $p>2$ and on
the initial minimum for $p<2$. The sign of $\delta$ changes with
$p-2$, and the stated choice is possible throughout the range
$p>\frac{2N}{N+1}$. Earlier weighted differential Harnack
estimates under negative curvature are given
in~\cite{Wang2017,Wang2021}. In particular, the negative-curvature
estimates in~\cite{Wang2021} use an effective curvature quantity
involving the supremum of a pressure-gradient expression, with
positive pressure. Theorem~\ref{intro:negative-pressure} instead
gives the explicit comparison
\eqref{intro:negative-pressure-harnack} using the initial amplitude
and covers the negative pressure in the singular range as well.
No a priori gradient bound enters its constants.

In the proof, we control the additional second-variation term
by interpolating the positive endpoint values of $\theta\mu v$
and choosing the geodesic variation according to that interpolation.

The sign conditions on $\delta$ give $a>0$ and $c_\delta>0$.
Moreover, as $\delta\to0$ with $\delta(p-2)>0$,
\[
 \frac{1}{\kappa_\delta}\longrightarrow p-2+\frac{p}{N}>0.
\]
Thus $\kappa_\delta>0$ for sufficiently small $\delta$ of the
prescribed sign, throughout the stated range of $p$.

\begin{proof}[Proof of Theorem~\ref{intro:negative-pressure}]
We use $a,c_\delta,\kappa_\delta,U$ from the statement.
The amplitude bound in subsection~\ref{subsec:regularity} gives
\begin{equation}\label{neg:amplitude}
 u^{p-2}\leq U.
\end{equation}
For $p<2$, first translate the solution forward by a fixed
time $\tau_0>0$, retaining the notation $u,v,T$.
Lemma~\ref{lemma:singular-regularity} gives spatial and
temporal Lipschitz bounds for the pressure on compact
strips including the new initial time. The bound
\eqref{neg:amplitude} still holds with the original $U$,
which we keep throughout the proof.

Retain $\omega,S,\mu$, and $k_\delta$ from the theorem, and set
$z=\kappa_\delta\frac{S'}{S}$. Then
\begin{equation}\label{neg:z}
 \frac{\mu'}{\mu}=\theta z,\quad
 z'+\frac{z^2}{\kappa_\delta}=\kappa_\delta\omega^2,\quad
 z(t)\geq\frac{\kappa_\delta}{t}.
\end{equation}
The pressure equation is
$v_t=\theta v\Delta_{p,f}v+|\nabla v|^p$, so $W=\mu v$
satisfies
\begin{equation}\label{neg:weighted-equation}
 W_t=\theta\mu^{1-p}W\Delta_{p,f}W
            +\mu^{1-p}|\nabla W|^p+\theta zW,
 \quad\theta W>0.
\end{equation}
These identities hold in the viscosity sense by the same
change of variables used in Section~\ref{sec:homogeneous-power}.

Fix a finite time $b<T$ and suppose the comparison in
\eqref{intro:negative-pressure-harnack} fails for some $0<s<t<b$. Use the kernel
\eqref{gneq1:kernel} for $\gamma=p-1$, and define
\begin{equation}\label{neg:Z}
 Z_\varepsilon(x,t,y,s)
 =W(x,t)-W(y,s)+k_\delta(s,t)h_\varepsilon(d(x,y))
                         +\frac{\varepsilon}{b-t}.
\end{equation}
For small $\varepsilon$, this function has a value below
a fixed negative number. We localize its minimum as in
the preceding proof. For $p>2$, $W>0$ tends uniformly to
zero at time zero. Hence a fixed negative sublevel stays
away from $s=0$, and the divergence of $k_\delta(s,t)$
as $t-s\to0$, followed by uniform continuity of $W$,
keeps $t-s$ away from zero.

For $p<2$, $\mu$ is decreasing. By \eqref{neg:z},
\[
 \frac{\mu(s)}{\mu(t)}-1
 \geq-\theta\kappa_\delta\frac{t-s}{t},
\]
and
\[
 \mu(t)^{p-1}\int_s^t\mu(\tau)^{1-p}\,d\tau\leq t-s.
\]
Let $L$ be a common spatial and temporal Lipschitz constant
for the translated pressure on $M\times[0,b]$. Minimizing
the distance terms against $-Ld(x,y)$ now gives
\begin{align*}
 &\frac{1}{\mu(t)}\left(W(x,t)-W(y,s)
       +\frac{p-1}{p}k_\delta(s,t)
                         d(x,y)^{\frac{p}{p-1}}\right)\\
 &\quad\geq(t-s)\left(
       \frac{-\theta\kappa_\delta\min(-v)}{t}-L-aL^p\right),
\end{align*}
where the positive minimum of $-v$ is taken over this
strip. Choose $0<t_*<b$ so that the right-hand side is
nonnegative for $t\leq t_*$, and minimize
$Z_\varepsilon$ over $t\geq t_*$. On this region,
$-W(y,s)\to+\infty$ uniformly as $s\downarrow0$,
while $W(x,t)$ is bounded below. Thus $s$ stays away
from zero on a fixed negative sublevel. The distance cost
and uniform continuity then bound $t-s$ away from zero.
With these bounds, the regularized cost converges uniformly,
so the displayed inequality excludes the face $t=t_*$.
This treats the joint initial corner just as in
\eqref{gneq1:initial-corner}.

In both ranges, the terminal penalty excludes $t=b$ for
fixed $\varepsilon$. We therefore obtain a negative
interior minimum $(x_0,t,y_0,s)$, with uniform positive
lower bounds for $s$ and $t-s$. As in the preceding
proof, it cannot lie on the spatial diagonal: freezing the
earlier endpoint would give an admissible lower test with
zero spatial gradient and Hessian and time derivative
$-\frac{\varepsilon}{(b-t)^2}$, whereas
\eqref{neg:weighted-equation} requires that derivative to
be at least $\theta z(t)W(x_0,t)>0$.

At this contact, set
\[
 d=d(x_0,y_0)>0,\quad
 I=\int_s^t\mu(\tau)^{1-p}\,d\tau,\quad
 q=k_\delta(s,t)h_\varepsilon'(d)>0.
\]
With the contact pair fixed, $d,I,q$, and
$k_\delta=k_\delta(s,t)$ are constants, and kernel derivatives
are evaluated at $d$. The quantities $V,P$, and $\chi$ below are
functions on $[s,t]$. Interpolate the positive endpoint values
$\theta W$ by
\begin{align}
 V(\tau)&=\theta W(y_0,s)
  +\frac{\theta[W(x_0,t)-W(y_0,s)]}{I}
                      \int_s^\tau\mu(\xi)^{1-p}\,d\xi,
                      \label{neg:interpolation}\\
 P(\tau)&=\frac{\mu(\tau)^{1-p}q^p}{V(\tau)},\notag
\end{align}
for $s\leq\tau\leq t$.
In particular, $V>0$. The kernel bound
\eqref{gneq1:kernel-bounds} and
$k_\delta^{p-1}=\frac{1}{apI}$ give
$k_\delta h_\varepsilon\geq a(p-1)Iq^p$.
Since $Z_\varepsilon<0$, we have
$W(x_0,t)-W(y_0,s)<-k_\delta h_\varepsilon$.
Multiplication by $\delta\theta>0$ consequently gives
\begin{equation}\label{neg:interpolation-bounds}
 \delta P\frac{V'}{V}\leq-(p-2)\delta aP^2,
 \quad 0<\frac{V(\tau)}{\mu(\tau)}\leq U^{\frac{1}{p-1}}.
\end{equation}
For the second inequality, when $p>2$, $V$ decreases and
$\mu$ increases, so their ratio is bounded by its earlier
endpoint value. When $p<2$, $V$ increases and $\mu$
decreases, so the later endpoint gives the upper bound.
Both endpoint bounds follow from \eqref{neg:amplitude}.

Choose a positive function $\chi$ by
\begin{equation}\label{neg:index}
 \frac{\chi'}{\chi}=\frac{ap}{N}(z+\delta P),
 \quad\chi(s)=1.
\end{equation}
No monotonicity of $\chi$ is needed. This point matters
when $p<2$, since then $\delta<0$. On the contact
intervals, $V$ and $\mu$ are bounded above and away
from zero, while $q$ and $z$ are bounded. Thus $P$,
$\frac{\chi'}{\chi}$, $\chi$, and $\frac{1}{\chi}$
are uniformly bounded independently of small $\varepsilon$.

Choose a minimizing geodesic $\alpha:[s,t]\to M$ from
$y_0$ to $x_0$, parametrized by
\[
 |\alpha'(\tau)|=\frac{d\mu(\tau)^{1-p}}{I},
\]
and a parallel orthonormal frame with
$e_n=\frac{\alpha'}{|\alpha'|}$. Replace
\eqref{gneq1:variations} by
\begin{equation}\label{neg:variations}
 \alpha_i(\ell,\tau)
 =\exp_{\alpha(\tau)}\bigl(\ell\chi(\tau)e_i(\tau)\bigr),
\end{equation}
for $1\leq i\leq n$.
The construction in Section~\ref{sec:homogeneous-power}
again gives a single smooth upper support for distance:
divide the endpoint normal vectors by $\chi(s)$ and
$\chi(t)$, parallel transport and interpolate them
affinely, multiply by $\chi(\tau)$, and take the length
of the exponentiated curve. The support realizes all
variations in \eqref{neg:variations}, also at cut points.
We keep it fixed when varying the endpoint times.

Let $\psi$ be the last two terms of \eqref{neg:Z}, with
this support in place of distance. Lemma~\ref{lemma:2.1 two-point-jets}
gives matrices $X,Y$ for every $\rho>0$ such that
\begin{align}
 (-\psi_t,-D_x\psi,X)
       &\in\overline{\mathcal P}^{2,-}W(x_0,t),
       \label{neg:jets}\\
 (\psi_s,D_y\psi,Y)
       &\in\overline{\mathcal P}^{2,+}W(y_0,s),\notag\\
 \begin{pmatrix}-X&0\\0&Y\end{pmatrix}
       &\leq D^2_{x,y}\psi+\rho I_{2n}.\notag
\end{align}
The gradients are $-qe_n(t)$ and $-qe_n(s)$, so the
equation can be applied at both closed jets. Differentiating
the cost gives
\[
 -\partial_t(k_\delta h_\varepsilon) -a\mu(t)^{1-p}q^p
 =\frac{k_\delta\mu(t)^{1-p}}{(p-1)I} \left(h_\varepsilon
 -\frac{p-1}{p}(h_\varepsilon')^p\right) \geq0.
\]
For the earlier endpoint, replace $-\partial_t$ by
$\partial_s$ and $\mu(t)$ by $\mu(s)$. Both
remainders are uniformly $O(\varepsilon^2)$ by
\eqref{gneq1:kernel-bounds}.

With the endpoint-frame conventions of the preceding proof,
write
\[
 E_t=q^{p-2}\left((p-1)X_{nn}+\sum_{i<n}X_{ii}\right)
                                      +q^{p-1}f_n(t),
\]
and
\[
 E_s=q^{p-2}\left((p-1)Y_{nn}+\sum_{i<n}Y_{ii}\right)
                                      +q^{p-1}f_n(s).
\]
Subtracting the full gradient term from the viscosity
inequalities in \eqref{neg:weighted-equation} and dividing
by the positive diffusion coefficients yields
\begin{align}
 E_t&\leq-\mu(t)^{p-1}(z(t)+\delta P(t))
 -\frac{\varepsilon\mu(t)^{p-1}}{\theta W(x_0,t)(b-t)^2}
 +O(\varepsilon^2),\label{neg:endpoint-jets}\\
 E_s&\geq-\mu(s)^{p-1}(z(s)+\delta P(s)).\notag
\end{align}
We retain the negative terminal penalty at the later endpoint
and discard the nonnegative remainder at the earlier one.
The errors remain uniform since $\theta W$ is bounded away
from zero on the contact region.

We give the second variation calculation with the new
weight. The longitudinal length in \eqref{neg:variations}
is $d+\ell(\chi(t)-\chi(s))$ for small $\ell$.
This identity does not require $\chi(t)\geq\chi(s)$.
The transverse first variations vanish, and the sum of
their second variations is
\[
 \int_s^t\left((n-1)\frac{(\chi')^2}{|\alpha'|}
               -\chi^2|\alpha'|\Ric(e_n,e_n)\right)\,d\tau.
\]
Combining curvature and drift as before, the lower bound
$\Ric_f^N\geq-Kg$ gives
\begin{align}
 &\int_s^t\chi^2|\alpha'|\Ric(e_n,e_n)\,d\tau
                               +[\chi^2f_n]_s^t\notag\\
 &\quad\geq-K\int_s^t\chi^2|\alpha'|\,d\tau
              -(N-n)\int_s^t\frac{(\chi')^2}{|\alpha'|}\,d\tau.
                                      \label{neg:curvature}
\end{align}
Indeed, for $N>n$, differentiation of $\chi^2f_n$ and
the curvature bound leave the density terms
\[
 \frac{\chi^2|\alpha'|}{N-n}f_n^2+2\chi\chi'f_n
                  \geq-(N-n)\frac{(\chi')^2}{|\alpha'|}.
\]
For $N=n$, $f$ is constant and this term vanishes.

Contract \eqref{neg:jets} against
$(\chi(t)e_i(t),\chi(s)e_i(s))$, with weight $p-1$
in the longitudinal direction and $1$ in the transverse
directions, and multiply by $q^{p-2}$. The longitudinal
contribution is bounded using
\[
 (\chi(t)-\chi(s))^2
 \leq I\int_s^t\mu^{p-1}(\chi')^2\,d\tau,
\]
and
\[
 (p-1)(h_\varepsilon')^{p-2}h_\varepsilon''\leq1.
\]
It is therefore at most
$\frac{1}{ap}\int_s^t\mu^{p-1}(\chi')^2\,d\tau$
in magnitude. For the transverse contribution, use
$(h_\varepsilon')^{p-1}\leq d$,
$k_\delta^{p-1}=\frac{1}{apI}$, and
\eqref{neg:curvature}. Adding the drift boundary terms
gives
\[
 \chi(t)^2E_t-\chi(s)^2E_s \geq-\frac{N}{ap}\int_s^t\mu^{p-1}(\chi')^2\,d\tau
 -Kq^{p-1}\frac{d}{I} \int_s^t\chi^2\mu^{1-p}\,d\tau -\varepsilon^2.
\]
Here we choose the matrix parameter so that
\[
 \rho q^{p-2}\bigl(\chi(t)^2+\chi(s)^2\bigr)(n+p-2)
                                      \leq\varepsilon^2.
\]
The nonzero contact gradient allows this choice also for
$p<2$. Combining the geometric bound with
\eqref{neg:endpoint-jets}, we obtain
\begin{align}
 0\leq{}&-[\chi^2\mu^{p-1}(z+\delta P)]_s^t
       +\frac{N}{ap}\int_s^t\mu^{p-1}(\chi')^2\,d\tau
                                      \notag\\
 &+Kq^{p-1}\frac{d}{I}
                 \int_s^t\chi^2\mu^{1-p}\,d\tau\notag\\
 &-\frac{\varepsilon\chi(t)^2\mu(t)^{p-1}}
              {\theta W(x_0,t)(b-t)^2}
       +O(\varepsilon^2).\label{neg:combined}
\end{align}

The choice of $\chi$ and the interpolation estimate now
control the curvature term. By \eqref{neg:interpolation},
\[
 \frac{P'}{P}=-(p-2)z-\frac{V'}{V}.
\]
Using \eqref{neg:index} and
\eqref{neg:interpolation-bounds}, we calculate
\begin{align*}
 & \frac{[\chi^2\mu^{p-1}(z+\delta P)]'
        -\frac{N}{ap}\mu^{p-1}(\chi')^2}
        {\chi^2\mu^{p-1}} \\
 = &\quad  z'+(p-2)z^2+\frac{ap}{N}(z+\delta P)^2
                                  -\delta P\frac{V'}{V}\\
 \geq &\quad z'+\left(p-2+\frac{ap}{N}\right)z^2
                  +\frac{2ap\delta}{N}zP+2c_\delta P^2\\
 = &\quad z'+\frac{z^2}{\kappa_\delta}+c_\delta P^2
       +c_\delta\left(P+\frac{ap\delta}{Nc_\delta}z\right)^2\\
 \geq &\quad \kappa_\delta\omega^2+c_\delta P^2.
\end{align*}
In the last two lines we used the definitions of
$c_\delta,\kappa_\delta$ and \eqref{neg:z}. All
functions differentiated in this calculation are auxiliary
functions; no derivatives of the solution are taken.

The remaining kernel error satisfies
\[
 0\leq q^{p-1}\frac{d}{I}-apq^{2p-2}
 =\frac{q^{p-1}}{I}
                 \bigl(d-(h_\varepsilon')^{p-1}\bigr)
 =O(\varepsilon^2).
\]
Also, \eqref{neg:interpolation-bounds} gives
\[
 q^{2p-2}\mu^{1-p} =\mu^{p-1}P^{\frac{2(p-1)}{p}}
 \left(\frac{V}{\mu}\right)^{\frac{2(p-1)}{p}} \leq
 U^{\frac{2}{p}}\mu^{p-1}P^{\frac{2(p-1)}{p}}.
\]
Consequently, the curvature contribution in
\eqref{neg:combined} is at most
\[
 apKU^{\frac{2}{p}}\int_s^t
       \chi^2\mu^{p-1}P^{\frac{2(p-1)}{p}}\,d\tau
       +O(\varepsilon^2).
\]
For $P\geq0$, we have
\begin{equation}\label{neg:young}
 apKU^{\frac{2}{p}}P^{\frac{2(p-1)}{p}}-c_\delta P^2
 \leq\frac{a^p(p-1)^{p-1}K^pU^2}{c_\delta^{p-1}}.
\end{equation}
When $K>0$, this follows by maximizing the left-hand
side: its maximum occurs at
$P^{\frac{2}{p}}=\frac{a(p-1)KU^{\frac{2}{p}}}{c_\delta}$.
For $K=0$, the inequality is immediate.
By the definition of $\omega$, the right-hand side of
\eqref{neg:young} is $\kappa_\delta\omega^2$, also when
$K=0$. Substituting these bounds into \eqref{neg:combined}
therefore gives
\[
 0\leq-\frac{\varepsilon\chi(t)^2\mu(t)^{p-1}}
                  {\theta W(x_0,t)(b-t)^2}+O(\varepsilon^2).
\]
The coefficient of $-\varepsilon$ has a uniform positive
lower bound: $\chi$ and $\mu$ are bounded away from zero,
$\theta W$ is positive and bounded above, and $t<b$.
Dividing by $\varepsilon$ and letting $\varepsilon\downarrow0$
gives a contradiction.

Since $b<T$ was arbitrary, the comparison holds for all
$0<s<t<T$. For $p<2$, remove the time translation by applying
it at $s-\tau_0,t-\tau_0$ and letting $\tau_0\downarrow0$.
The constant $U$ has retained its original value throughout.

For the differential inequality, let a $C^1$ function
$\varphi$ touch $W$ from below at $(x,t)$, and choose
\[
 y=\exp_x\bigl(ap\mu(t)^{1-p}(t-s)
       |\nabla\varphi(x,t)|^{p-2}\nabla\varphi(x,t)\bigr),
\]
with the vector defined to be zero at a zero gradient.
Replace $W$ by $\varphi$ at the nearby earlier endpoint
of the comparison, divide by $t-s$, and let $s\uparrow t$.
Since
$k_\delta(s,t)^{p-1}(t-s)\to\frac{\mu(t)^{p-1}}{ap}$,
we obtain
\[
 \varphi_t-a\mu(t)^{1-p}|\nabla\varphi|^p\geq0.
\]
Undoing $W=\mu v$ and using
$\frac{\mu'}{\mu}=\theta z$ gives the asserted
first-order viscosity inequality. At a classical
noncritical point, the pressure equation identifies its
left-hand side with
\[
 \theta v\left(\Delta_{p,f}v
           +\delta\frac{|\nabla v|^p}{\theta v}+z\right).
\]
Since $\theta v>0$, this also proves the stated
classical formulation.
\end{proof}

When $K=0$, $S(t)=t$. Letting $\delta\to0$ with
$\delta(p-2)>0$ gives $\kappa_\delta\to\kappa$ and
$\mu(t)\to t^{\theta\kappa}$, while
\[
 k_\delta(s,t)\longrightarrow
 \left(\frac{r}{p(t^r-s^r)}\right)^{\frac{1}{p-1}},
 \quad r=1-(p-2)\kappa=\frac{p}{p+N(p-2)}.
\]
Thus the two conclusions reduce to those of
Theorem~\ref{thm:intro-weighted-p}.

The logarithmic argument for $p=2$ gives a separate weighted heat
estimate without amplitude dependence.

\begin{corollary}\label{intro:negative-weighted-heat}
Under the geometric assumptions of
Theorem~\ref{intro:negative-pressure}, let
$u\in C(M\times[0,T))$ be a positive viscosity solution of
$u_t=\Delta_f u$. For $0<\delta<1$, set
\[
 a=1-\delta,\quad
 \omega=\frac{aK}{\delta},
\]
and define $S(t)=\frac{\sinh(\omega t)}{\omega}$ for
$\omega>0$, with $S(t)=t$ when $\omega=0$. Then
\[
 u(x,t)\geq u(y,s)
 \left(\frac{S(s)}{S(t)}\right)^{\frac{N}{2a}}
 \exp\!\left(-\frac{d(x,y)^2}{4a(t-s)}\right)
\]
for all $x,y\in M$ and $0<s<t<T$, and $v=\log u$ satisfies
\[
 v_t-a|\nabla v|^2
       +\frac{N}{2a}\frac{S'(t)}{S(t)}\geq0.
\]
\end{corollary}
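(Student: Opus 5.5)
The plan is to run the argument of Proposition~\ref{intro:prop-degree-one} with $A=1$, $B=1$, in the logarithmic variables, and to import two ingredients from the proof of Theorem~\ref{intro:negative-pressure}. First, only a fraction $a=1-\delta$ of the gradient term is used for the spatial cost, and the leftover $\delta|\nabla v|^2$ absorbs the curvature error. Second, the power $t$ is replaced by the hyperbolic weight $S(t)$, and the geodesic variations carry an auxiliary weight $\chi$.

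\textbf{Setup.} Set $v=\log u$, so that $v_t=\Delta_f v+|\nabla v|^2$ in the viscosity sense by Section~\ref{sec:prelim}. Put $z=\frac{N}{2a}\frac{S'}{S}$. Since $S''=\omega^2S$, this gives
\[
 z'+\frac{2a}{N}z^2=\frac{N}{2a}\omega^2,
 \quad
 z\geq\frac{N}{2a}\omega,
 \quad
 z\geq\frac{N}{2a t}.
\]
The target is
\[
 v(x,t)-v(y,s)+\frac{N}{2a}\log\frac{S(t)}{S(s)}+\frac{d(x,y)^2}{4a(t-s)}\geq0 .
\]
Argue by contradiction with
\[
 Z_\varepsilon=v(x,t)-v(y,s)+\frac{N}{2a}\log\frac{S(t)}{S(s)}+\frac{h_\varepsilon(d)}{2a(t-s)}+\frac{\varepsilon}{b-t},
\]
using the kernel $h_\varepsilon$ from Section~\ref{sec:homogeneous}.

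\textbf{Localization and jets.} The localization is identical to the earlier proof: $v$ is bounded on $M\times[0,b]$, and the time logarithm still diverges as $s\downarrow0$. Diagonal contacts are excluded by the frozen-endpoint test, since the lower test has time derivative $-z(t_0)-\varepsilon/(b-t_0)^2<0$. At an off-diagonal contact, Lemma~\ref{lemma:2.1 two-point-jets} is applied with a smooth upper distance support realizing the variations $\exp_{\alpha(\tau)}(\ell\chi(\tau)e_i(\tau))$. Here $\alpha$ is parametrized with constant speed $d/(t-s)$, and $\chi$ is chosen so that $\chi'/\chi=\frac{2a}{N}z$ and $\chi(s)=1$. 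Write $q=h_\varepsilon'(d)/(2a(t-s))$ for the common length of the two endpoint gradients. Subtracting $a q^2$ from each endpoint inequality leaves
\[
 E_t\leq -z(t)-\delta q^2-\frac{\varepsilon}{(b-t)^2}+O(\varepsilon^2),
 \qquad
 E_s\geq -z(s)-\delta q^2,
\]
where $E=\tr X+q f_n$ as in the previous proofs.

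\textbf{Second variation and integration by parts.} Contract against $(\chi(t)e_i(t),\chi(s)e_i(s))$ and combine Ricci and drift as in \eqref{neg:curvature}. The lower bound for $\chi(t)^2E_t-\chi(s)^2E_s$ then has three pieces:
\begin{itemize}
\item a longitudinal part, at most $\frac{(\chi(t)-\chi(s))^2}{2a(t-s)}\leq\frac{1}{2a}\int_s^t(\chi')^2$ in magnitude;
\item a transverse part, at most $\frac{N-1}{2a}\int_s^t(\chi')^2$ in magnitude;
\item a curvature part, $-Kq\frac{d}{t-s}\int_s^t\chi^2\,d\tau$, which equals $-2aKq^2\int_s^t\chi^2\,d\tau$ up to $O(\varepsilon^2)$.
\end{itemize}
The first two pieces together contribute at most $\frac{N}{2a}\int_s^t(\chi')^2$. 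Subtracting the endpoint inequalities yields
\[
 0\leq-\bigl[\chi^2(z+\delta q^2)\bigr]_s^t+\frac{N}{2a}\int_s^t(\chi')^2+2aKq^2\int_s^t\chi^2-\varepsilon\,c+O(\varepsilon^2),
\]
with $c>0$ uniformly. Expand the boundary term as an integral, using that $q$ is constant along the geodesic. The $z$-part contributes, per unit $\chi^2$,
\[
 z'+\frac{4a}{N}z^2-\frac{2a}{N}z^2=z'+\frac{2a}{N}z^2=\frac{N}{2a}\omega^2\geq0 .
\]
The $\delta q^2$-part contributes $\delta q^2(\chi^2)'=\frac{4a\delta}{N}z\,q^2\chi^2$. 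This is at least $2\delta\omega\,q^2\chi^2=2aK\,q^2\chi^2$, precisely because $\omega=aK/\delta$, so it absorbs the curvature part. This yields $0\leq-\varepsilon c+O(\varepsilon^2)$, a contradiction.

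\textbf{Conclusions and main obstacle.} Exponentiating gives the Harnack inequality. The differential inequality follows as before by choosing $y=\exp_x(2a(t-s)\nabla\varphi)$. I expect the main obstacle to be the bookkeeping in the second-variation step: the longitudinal and transverse $(\chi')^2$ terms must combine into exactly $\frac{N}{2a}\int_s^t(\chi')^2$ with the constant-speed parametrization, and the kernel errors $d-h'_\varepsilon(d)$ must be checked to be $O(\varepsilon^2)$.
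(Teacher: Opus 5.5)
Your proposal is correct and follows the paper's proof. The shared ingredients are the logarithmic variable, geodesic variations weighted by $\chi$, the $N$-dimensional combination of Ricci and drift, and absorption of $2aKq^2\int_s^t\chi^2$ by $\delta q^2[\chi^2]_s^t$ through $\chi'\geq\omega\chi$ with $\omega=aK/\delta$.

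The differences are only technical. The paper replaces $d^2$ by the energy upper support directly, so it needs no kernel $h_\varepsilon$ and has no $O(\varepsilon^2)$ errors; it treats $x=y$ with the constant geodesic, since $\Delta_f$ is continuous at zero gradient. It also takes $\chi=e^{\omega\tau}S$, which makes $(\chi^2z)'=\frac{N}{2a}(\chi')^2$ hold exactly. Your choice $\chi=S/S(s)$ also satisfies $\chi'\geq\omega\chi$ and leaves only the harmless nonnegative slack $\frac{N}{2a}\omega^2\chi^2$.
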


The corollary gives the weighted heat estimate obtained by the
same comparison method. We do not claim that its curvature term
is optimal; refined linear estimates with other coefficient choices
are given in~\cite{LiXu2011,BakryBolleyGentil2017,ZhangSharp2025}.

\begin{proof}[Proof of Corollary~\ref{intro:negative-weighted-heat}]
Retain $a,\delta,\omega$, and $S$ from the corollary, and put
\[
 z=\frac{N}{2a}\frac{S'}{S},\quad
 W=\log u+\frac{N}{2a}\log S,\quad
 \chi(\tau)=e^{\omega\tau}S(\tau).
\]
Then $W_t=\Delta_fW+|\nabla W|^2+z$. The time weight
satisfies
\[
 \chi'>0,\quad \chi'\geq\omega\chi,\quad
 (\chi^2z)'=\frac{N}{2a}(\chi')^2.
\]
Indeed, for $\omega>0$,
$\chi=\frac{e^{2\omega\tau}-1}{2\omega}$ and
$\chi^2z=\frac{N}{8a\omega}(e^{4\omega\tau}-1)$.
For $\omega=0$, these identities follow from
$\chi(\tau)=\tau$ and $z(\tau)=\frac{N}{2a\tau}$.

We prove
\begin{equation}\label{neg:heat-comparison}
 W(x,t)-W(y,s)+\frac{d(x,y)^2}{4a(t-s)}\geq0.
\end{equation}
Fix a finite time $b<T$ and suppose the comparison fails
for some $0<s<t<b$. For sufficiently small
$\varepsilon>0$, the function
\[
 Z_\varepsilon(x,t,y,s)
 =W(x,t)-W(y,s)+\frac{d(x,y)^2}{4a(t-s)}
                         +\frac{\varepsilon}{b-t}
\]
has a negative value. It attains a negative minimum at
interior times. To see this, $\log u$ is bounded and
uniformly continuous on $M\times[0,b]$, and
$\frac{S(t)}{S(s)}\geq\frac{t}{s}$. If $t-s\to0$
on a fixed negative sublevel, the nonnegative time term
and distance cost force $d(x,y)\to0$, contradicting
uniform continuity. If $s\to0$ with $t-s$ bounded below,
the time term tends to $+\infty$. Finally, the terminal
penalty excludes $t=b$.

Fix such a minimum and keep the notation $(x,t,y,s)$
for it. Let $\alpha:[s,t]\to M$ be a constant-speed
minimizing geodesic from $y$ to $x$. When $x=y$, take
the constant geodesic. To construct a smooth upper support
for squared distance, interpolate small endpoint vectors
$U\in T_yM$ and $V\in T_xM$ along $\alpha$ with the field
\begin{equation}\label{eq:s8-endpoint-field}
 J_{U,V}(r)=\chi(r)\left(
 \frac{t-r}{(t-s)\chi(s)}P_{s,r}U
 +\frac{r-s}{(t-s)\chi(t)}P_{t,r}V\right),
\end{equation}
for $s\leq r\leq t$, where $P$ is parallel transport.
Set $\Gamma_{U,V}(r)=\exp_{\alpha(r)}J_{U,V}(r)$. The smooth function
\[
 \mathscr E(U,V)=(t-s)\int_s^t|\partial_r\Gamma_{U,V}|^2\,dr
\]
majorizes the squared distance between the new endpoints
by Cauchy-Schwarz and agrees with it at $U=V=0$.
This remains true at cut points and on the spatial diagonal.
The construction applies to any smooth positive weight on $[s,t]$.

Replace $d^2$ by this support in the penalty and apply
Lemma~\ref{lemma:2.1 two-point-jets}. Keep the support's
construction parameters fixed when differentiating the
endpoint times. The first variation gives the common
endpoint gradient $-\frac{\dot\alpha}{2a}$ in parallel
frames. Set $q=\frac{|\dot\alpha|}{2a}$, allowing
$q=0$. For every $\rho>0$, the resulting spatial jets
$X,Y$ satisfy
\[
 \begin{pmatrix}X&0\\0&-Y\end{pmatrix}
 \geq-\nabla^2_{x,y}\psi-\rho I_{2n},
\]
where $\psi$ is the supported penalty. The endpoint
viscosity inequalities are
\[
 E_t:=\tr X+\frac{1}{2a}\langle\nabla f,\dot\alpha\rangle_t
 \leq-z(t)-\delta q^2-\frac{\varepsilon}{(b-t)^2},
\]
and
\[
 E_s:=\tr Y+\frac{1}{2a}\langle\nabla f,\dot\alpha\rangle_s
 \geq-z(s)-\delta q^2.
\]
Here the time derivatives of the distance cost contribute
$aq^2$ at both endpoints. The operator is continuous also
at a zero gradient, so these inequalities include $x=y$.

Contract the matrix inequality against
$(\chi(t)e_i(t),\chi(s)e_i(s))$ and sum over a parallel
orthonormal frame. The second variation of energy gives
\begin{align*}
 \chi(t)^2\operatorname{tr}X-\chi(s)^2\operatorname{tr}Y
 &\geq-\frac{n}{2a}\int_s^t(\chi')^2\,dr
       +\frac{1}{2a}\int_s^t\chi^2
                          \Ric(\dot\alpha,\dot\alpha)\,dr\\
 &\quad{}-\rho n\bigl(\chi(t)^2+\chi(s)^2\bigr).
\end{align*}
The endpoint variation curves are geodesics, so no endpoint
acceleration terms occur. For $N>n$, differentiating
$\chi^2\langle\nabla f,\dot\alpha\rangle$ and using
$\Ric_f^N\geq-Kg$ yields
\begin{align*}
 &\int_s^t\chi^2\Ric(\dot\alpha,\dot\alpha)\,dr
           +[\chi^2\langle\nabla f,\dot\alpha\rangle]_s^t\\
 &\quad\geq-K|\dot\alpha|^2\int_s^t\chi^2\,dr
       +\int_s^t\left(
       \frac{\chi^2}{N-n}\langle\nabla f,\dot\alpha\rangle^2
       +2\chi\chi'\langle\nabla f,\dot\alpha\rangle\right)\,dr\\
 &\quad\geq-K|\dot\alpha|^2\int_s^t\chi^2\,dr
                       -(N-n)\int_s^t(\chi')^2\,dr.
\end{align*}
For $N=n$, the same conclusion follows because $f$ is
constant. These bounds also hold for the constant geodesic.
Adding the drift terms and using
$\frac{|\dot\alpha|^2}{2a}=2aq^2$, we obtain
\[
 \chi(t)^2E_t-\chi(s)^2E_s
 \geq-\frac{N}{2a}\int_s^t(\chi')^2\,dr
 -2aKq^2\int_s^t\chi^2\,dr
 -\rho n\bigl(\chi(t)^2+\chi(s)^2\bigr).
\]
Combining this with the endpoint viscosity inequalities
and using $(\chi^2z)'=\frac{N}{2a}(\chi')^2$ gives
\begin{align*}
 \frac{\varepsilon\chi(t)^2}{(b-t)^2}
 &\leq q^2\left(2aK\int_s^t\chi^2\,dr
                         -\delta[\chi^2]_s^t\right)
       +\rho n\bigl(\chi(t)^2+\chi(s)^2\bigr)\\
 &\leq\rho n\bigl(\chi(t)^2+\chi(s)^2\bigr).
\end{align*}
For the last inequality we used
\[
 \delta[\chi^2]_s^t
 =2\delta\int_s^t\chi\chi'\,dr
 \geq2\delta\omega\int_s^t\chi^2\,dr
 =2aK\int_s^t\chi^2\,dr.
\]
The contact and $\varepsilon$ are fixed. Letting
$\rho\downarrow0$ is therefore a contradiction.
The argument includes $K=0$ without a limiting step.
Since $b<T$ was arbitrary, \eqref{neg:heat-comparison}
holds for all $0<s<t<T$. Substituting $W$ and
exponentiating proves the Harnack inequality.

Finally, let a $C^1$ function $\varphi$ touch $W$ from
below at $(x,t)$ and take
$y=\exp_x(2a(t-s)\nabla\varphi(x,t))$ in
\eqref{neg:heat-comparison}. Replacing $W$ by
$\varphi$ at the earlier endpoint, dividing by $t-s$,
and letting $s\uparrow t$ give
$\varphi_t-a|\nabla\varphi|^2\geq0$.
Undoing $W=v+\frac{N}{2a}\log S$ proves the
first-order viscosity inequality.
\end{proof}

\section{Fully nonlinear equations on curved manifolds}
\label{sec:harnack-manifolds}

We extend the flat-space estimates to closed Riemannian manifolds.
We combine the matrix comparison from
Section~\ref{sec:fully} with the energy upper support used in the
weighted heat proof. The second variation now contains a curvature
term with matrix weights. This is
the reason for the sectional curvature assumption in these theorems.

Throughout this section, $M$ is closed and connected, and $F$ is
continuous, positively homogeneous of degree one, and uniformly
elliptic with constants $0<\lambda\leq\Lambda$. Here $D^2u$
denotes the Riemannian Hessian. We also assume
\begin{equation}\label{eq:s8-invariance}
 F(O^TXO)=F(X),
\end{equation}
for $O\in O(n)$ and $X\in\Sym(n)$.
Thus the equation $u_t=F(\nabla^2u)$ is independent of the
orthonormal frame; see~\cite{HarveyLawson2011}. In particular, we
may identify the endpoint tangent spaces by parallel transport and
use the same matrix function at both endpoints. No convexity or
concavity of $F$ is required.

Kim and Lee~\cite{KimLee2014} proved Harnack inequalities for a
general uniformly parabolic viscosity class under sectional curvature
bounds, with scale-independent constants when the sectional curvature
is nonnegative. Here positive homogeneity gives explicit global time
and distance factors, together with first-order differential
inequalities.

\begin{theorem}\label{intro:thm-manifold-nonnegative}
Let $(M^n,g)$ be closed and connected with nonnegative sectional
curvature. Let $F\colon\Sym(n)\to\R$ be continuous and satisfy
\eqref{intro:fn-homogeneity}, \eqref{intro:fn-ellipticity}, and
\eqref{eq:s8-invariance}, with constants $0<\lambda\leq\Lambda$.
Set $C=\frac{n\Lambda}{2\lambda}$. Every positive
continuous viscosity solution of \eqref{eq 1.1 fully nonlinear}
on $M\times(0,T)$ satisfies
\[
 u(x,t)\geq u(y,s)\left(\frac{s}{t}\right)^C
 \exp\!\left(-\frac{d(x,y)^2}{4\lambda(t-s)}\right),
\]
for $x,y\in M$ and $0<s<t<T$.
Its logarithm $v=\log u$ satisfies
\[
 v_t-\lambda|\nabla v|^2+\frac{C}{t}\geq0
\]
in the first-order viscosity sense.
\end{theorem}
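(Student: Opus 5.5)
We combine the torus argument from the proof of Theorem~\ref{thm: main fully nonlinear} with the energy upper support from the proof of Corollary~\ref{intro:negative-weighted-heat}. Set $v=\log u$, so that $v_t=F(\nabla^2v+\nabla v\otimes\nabla v)$ in the viscosity sense. The goal is
\[
 Z(x,t,y,s)=v(x,t)-v(y,s)+C\log\frac{t-a}{s-a}+\frac{d(x,y)^2}{4\lambda(t-s)}\geq0
\]
for $a<s<t<b$. If it fails, we add the terminal penalty $\frac{\varepsilon}{b-t}$. Compactness of $M$ and boundedness and uniform continuity of $v$ on $M\times[a,b]$ then give a negative interior minimum, exactly as on the torus. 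We replace $d^2$ by the energy support $\mathscr E(U,V)$ built on a constant-speed minimizing geodesic $\alpha:[s,t]\to M$, using the weight $\chi(\tau)=\tau-a$ in \eqref{eq:s8-endpoint-field}. This support majorizes $d^2$, agrees with it at the contact, and stays smooth at cut points and on the diagonal. Lemma~\ref{lemma:2.1 two-point-jets} then yields jets $X,Y$ at the two endpoints with common gradient $p$ in parallel frames, together with the matrix inequality of \eqref{eq: 3.3} type.

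The endpoint inequalities are $-\psi_t\geq F(X+p\otimes p)$ and $\psi_s\leq F(Y+p\otimes p)$. We identify $T_yM$ with $T_xM$ by parallel transport along $\alpha$, which is legitimate by \eqref{eq:s8-invariance}. Multiplying by $(t-a)^2$ and $(s-a)^2$ and subtracting, we pick a Pucci-minimizing $\mathcal A$ with $\lambda I\leq\mathcal A\leq\Lambda I$ for the difference of the scaled matrices, as in \eqref{eq: 3.6}. This leaves $(t-a)^2\tr(\mathcal AX)-(s-a)^2\tr(\mathcal AY)$ to be bounded from below, plus a gradient term $\lambda((t-a)^2-(s-a)^2)|p|^2$. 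We write $\mathcal A=\sum_i\mu_i e_i\otimes e_i$ in an eigenbasis at $x$ and transport the frame parallel along $\alpha$. We then contract the matrix inequality against $(\chi(t)e_i(t),\chi(s)e_i(s))$ with weights $\mu_i\in[\lambda,\Lambda]$, which corresponds to the variation fields $\chi(\tau)e_i(\tau)$.

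The second variation of the energy along $\alpha$ in direction $\chi e_i$ equals $2(t-s)\int_s^t\bigl((\chi')^2|e_i|^2-\chi^2\langle R(e_i,\dot\alpha)\dot\alpha,e_i\rangle\bigr)d\tau$. Nonnegative sectional curvature makes each curvature term nonpositive. Because the weights are positive, the curvature contribution is favorable, and this is exactly where sectional rather than Ricci curvature is needed: with unequal weights $\mu_i$ one cannot sum the curvature terms into a Ricci term. Dividing by $4\lambda(t-s)$ gives
\[
 (t-a)^2\tr(\mathcal AX)-(s-a)^2\tr(\mathcal AY)\geq-\frac{\tr\mathcal A}{2\lambda}(t-s)-O(\rho)\geq-C(t-s)-O(\rho),
\]
which is identical to \eqref{eq: 3.7}. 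The time derivatives of the supported penalty equal $\frac{C}{t-a}-\lambda|p|^2+\frac{\varepsilon}{(b-t)^2}$ and $-\frac{C}{s-a}+\lambda|p|^2$, because the energy support is $(t-s)$ times an integral and its time derivatives reproduce $\mp\frac{d^2}{4\lambda(t-s)^2}$ at the contact. The cancellation of Section~\ref{sec:fully} therefore goes through and leaves $\frac{\varepsilon(t-a)^2}{(b-t)^2}\leq O(\rho)$; letting $\rho\downarrow0$ gives a contradiction.

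The main obstacle is keeping the construction times of the support fixed while still obtaining the correct time derivatives. We handle it by parametrizing $\alpha$ on the fixed interval $[s_0,t_0]$ and writing the penalty as $\frac{\mathscr E}{4\lambda(t-s)(t_0-s_0)}$, so that the $t,s$ dependence is explicit and $\mathscr E\geq(t_0-s_0)d^2$ preserves the minimum. We must also check that the Pucci choice of $\mathcal A$, made after the jets are fixed, is compatible with the contraction, which holds because the matrix inequality is valid for every test vector. Letting $a\downarrow0$ and exponentiating gives the Harnack inequality. The differential inequality follows as before by taking $y=\exp_x(2\lambda(t-s)\nabla\varphi(x,t))$ for a lower $C^1$ test $\varphi$ and letting $s\uparrow t$.
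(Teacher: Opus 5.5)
Your proposal is correct and follows the paper's argument: the energy upper support along the minimizing geodesic, a Pucci-minimizing $\mathcal A$ diagonalized in a parallel frame, and the $\mathcal A$-weighted contraction in which nonnegative sectional curvature makes each curvature term favorable. The one difference is that you run it directly with $\chi(\tau)=\tau-a$, $L=C/(\tau-a)$ and coefficient one on the distance term, whereas the paper first proves the $\operatorname{Sec}\geq-K$ estimate with $\alpha>1$ and then lets $K\downarrow0$ and $\alpha\downarrow1$; your direct version shows that limit is not needed.

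There is one bookkeeping slip. With the paper's normalization $\mathscr E=(t_0-s_0)\int_{s_0}^{t_0}|\partial_r\Gamma_{U,V}|^2\,dr\geq d^2$, the frozen penalty is just $\mathscr E/(4\lambda(t-s))$, with no extra factor $(t_0-s_0)$. That is also the normalization your second-variation and time-derivative computations actually use.
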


\begin{theorem}\label{intro:thm-manifold-negative}
Let $(M^n,g)$ be closed and connected with $\operatorname{Sec}\geq-K$,
where $K>0$. Let $F\colon\Sym(n)\to\R$ be continuous and satisfy
\eqref{intro:fn-homogeneity}, \eqref{intro:fn-ellipticity}, and
\eqref{eq:s8-invariance}, with constants $0<\lambda\leq\Lambda$.
For any $\alpha>1$, set
\[
 C=\frac{n\Lambda}{2\lambda},\quad
 c=\frac{(n-1)\Lambda K}{\alpha-1}.
\]
Every positive continuous viscosity solution of
\eqref{eq 1.1 fully nonlinear} on $M\times(0,T)$ satisfies
\[
 u(x,t)\geq u(y,s)
 \left(\frac{\sinh\!\left(\frac{cs}{2}\right)}
 {\sinh\!\left(\frac{ct}{2}\right)}\right)^{\alpha C}
 \exp\!\left(-\frac{\alpha d(x,y)^2}{4\lambda(t-s)}\right),
\]
and
\[
 v_t-\frac{\lambda}{\alpha}|\nabla v|^2
 +\frac{\alpha Cc}{2}\coth\!\left(\frac{ct}{2}\right)\geq0,
 \quad v=\log u,
\]
for all $x,y\in M$ and $0<s<t<T$, with the differential inequality
understood in the first-order viscosity sense. In particular,
\[
 u(x,t)\geq u(y,s)\left(\frac{s}{t}\right)^{\alpha C}
 \exp\!\left(-\frac{\alpha d(x,y)^2}{4\lambda(t-s)}
             -\frac{\alpha Cc}{2}(t-s)\right),
\]
and
\[
 v_t-\frac{\lambda}{\alpha}|\nabla v|^2
 +\frac{\alpha C}{t}+\frac{\alpha Cc}{2}\geq0.
\]
When $c=0$, the hyperbolic sine ratio and
$\frac{c}{2}\coth\!\left(\frac{ct}{2}\right)$ are interpreted as
$\frac{s}{t}$ and $\frac{1}{t}$, respectively.
\end{theorem}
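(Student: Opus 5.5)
We follow the proof of Theorem~\ref{thm: main fully nonlinear} for the Pucci comparison, and the proof of Corollary~\ref{intro:negative-weighted-heat} for the energy upper support and the hyperbolic time weight.

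\textbf{Setup.} Set $v=\log u$, so that $v_t=F(\nabla^2v+\nabla v\otimes\nabla v)$ in the viscosity sense. Put
\[
 S(t)=\frac{\sinh(ct/2)}{c/2},\qquad z=\alpha C\frac{S'}{S},\qquad W=v+\alpha C\log S,
\]
so that $W_t=F(\nabla^2W+\nabla W\otimes\nabla W)+z$. We aim to prove
\[
 W(x,t)-W(y,s)+\frac{\alpha\,d(x,y)^2}{4\lambda(t-s)}\geq0 .
\]
Suppose this fails below a finite time $b<T$. Add $\varepsilon/(b-t)$ and localize exactly as in the corollary: $v$ is bounded and uniformly continuous, and $S(t)/S(s)\geq t/s$. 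This yields a negative interior minimum with $s$ and $t-s$ bounded below. At the contact, take a constant-speed minimizing geodesic $\alpha$ from $y$ to $x$ (constant if $x=y$). Replace $d^2$ by the energy support $\mathscr E$ of \eqref{eq:s8-endpoint-field}, built with a positive weight $\chi$ chosen below. Lemma~\ref{lemma:2.1 two-point-jets} then gives closed jets with common parallel-frame gradient $p$, where
\[
 |p|=\frac{\alpha|\dot\alpha|}{2\lambda}.
\]
The time derivatives of the cost contribute $\lambda|p|^2/\alpha$ at each endpoint.

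\textbf{Key step: matrix-weighted contraction.} Multiply the two endpoint inequalities by $\chi(t)^2$ and $\chi(s)^2$ and subtract. Invariance \eqref{eq:s8-invariance} lets us use one matrix function in the parallel frames. Choose $A$, with $\lambda I\leq A\leq\Lambda I$, attaining the lower Pucci minimum of the difference of the scaled matrices $\chi^2(X+p\otimes p)$. As in \eqref{eq: 3.6}, this produces the gain
\[
 \lambda\bigl(\chi(t)^2-\chi(s)^2\bigr)|p|^2
\]
provided $\chi$ is increasing. Contract the matrix inequality against $(\chi(t)A^{1/2}e_i,\chi(s)A^{1/2}e_i)$, using the parallel fields $\chi(r)A^{1/2}e_i$ along $\alpha$ (they realize the support's variations). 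The second variation of energy gives
\[
 -\frac{\alpha\tr A}{2\lambda}\int_s^t(\chi')^2\,dr
 +\frac{\alpha}{2\lambda}\int_s^t\chi^2\sum_i R\bigl(A^{1/2}e_i,\dot\alpha,\dot\alpha,A^{1/2}e_i\bigr)\,dr .
\]
Here the sectional bound is used in place of Ricci: the curvature sum equals $\tr$ of $A$ composed with the curvature operator $R(\cdot,\dot\alpha)\dot\alpha$. That operator is $\geq-K|\dot\alpha|^2$ on $\dot\alpha^\perp$ and vanishes on $\dot\alpha$. Hence the sum is at least $-(n-1)\Lambda K|\dot\alpha|^2$.

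\textbf{Closing the argument.} After the substitution $|\dot\alpha|^2=4\lambda^2|p|^2/\alpha^2$, the gradient terms leave
\[
 |p|^2\Bigl(\frac{2(n-1)\Lambda K\lambda}{\alpha}\int_s^t\chi^2-\lambda\Bigl(1-\frac1\alpha\Bigr)[\chi^2]_s^t\Bigr).
\]
This is $\leq0$ once $\chi'\geq\frac c2\chi$, with $c=(n-1)\Lambda K/(\alpha-1)$. The time terms cancel provided
\[
 (\chi^2z)'=\frac{\alpha n\Lambda}{2\lambda}(\chi')^2=\alpha C(\chi')^2 .
\]
Both requirements hold for
\[
 \chi(\tau)=e^{c\tau/2}S(\tau)=\frac{e^{c\tau}-1}{c},
\]
exactly as in the corollary with $\omega$ replaced by $c/2$. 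I expect the main obstacle to be this step: checking the identity $(\chi^2z)'=\alpha C(\chi')^2$ and the compatibility of the Pucci-chosen $A$ with the contraction. One must also verify that $\tr A\leq n\Lambda$ is the only bound needed for the $(\chi')^2$ term, and that the matrix error, chosen after the contact is fixed, produces the contradiction with $\varepsilon\chi(t)^2/(b-t)^2$.

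\textbf{Remaining conclusions.} The differential inequality follows by taking
\[
 y=\exp_x\Bigl(\frac{2\lambda(t-s)}{\alpha}\nabla\varphi\Bigr)
\]
and letting $s\uparrow t$, as in Section~\ref{sec:homogeneous}. The \emph{in particular} forms follow from
\[
 \frac{\sinh(cs/2)}{\sinh(ct/2)}\geq\frac st\,e^{-c(t-s)/2}
 \quad\text{and}\quad
 \coth x\leq1+\frac1x .
\]
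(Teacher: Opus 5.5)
Your argument is the paper's argument: the logarithmic equation, the energy upper support with $\chi(\tau)=\frac{e^{c\tau}-1}{c}$, the Pucci-minimizing $A$, the sectional-curvature bound $\tr(A\mathcal R)\geq-(n-1)\Lambda K|\dot\gamma|^2$, absorption of the curvature term using $\alpha>1$, and the identity $(\chi^2z)'=\alpha C(\chi')^2$. Your contraction against $A^{1/2}e_i$ is equivalent to the paper's diagonalization with weights $a_i$. The identity does hold, since $\chi^2z=\alpha C\frac{e^{2c\tau}-1}{2c}$. Two steps, however, need repair.

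\textbf{Localization.} The theorem assumes only that $u$ is a positive continuous solution on $M\times(0,T)$, not on $M\times[0,T)$ as in Corollary~\ref{intro:negative-weighted-heat}. So $v=\log u$ need not be bounded or uniformly continuous on $M\times(0,b]$; a heat kernel already shows this. Neither ingredient of the corollary's localization is then available. Boundedness no longer forces $d(x,y)\to0$ when $t-s\to0$ near time zero. Also, $\alpha C\log\frac{S(t)}{S(s)}$ need not dominate $-v(y,s)$ as $s\downarrow0$. Hence the existence of a negative interior minimum is not justified.

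The paper fixes $0<a<b<T$ and uses that $v$ is bounded and uniformly continuous on $M\times[a,b]$. It shifts the weights to $\chi(r)=\frac{e^{c(r-a)}-1}{c}$ and $L(r)=\frac{\alpha Cc}{2}\coth\bigl(\frac{c(r-a)}{2}\bigr)$, whose integral diverges as $s\downarrow a$, and lets $a\downarrow0$ at the end. Equivalently, you can run your argument for the translate $u(\cdot,\cdot+a)$, which is continuous up to time zero, and then let $a\downarrow0$.

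\textbf{Absorption.} Since $(n-1)\Lambda K=c(\alpha-1)$, your bracket equals $\frac{2\lambda(\alpha-1)}{\alpha}|p|^2\bigl(c\int_s^t\chi^2\,d\tau-\int_s^t\chi\chi'\,d\tau\bigr)$. So $\chi'\geq\frac c2\chi$ does not suffice; you need $\chi'\geq c\chi$, as in \eqref{eq:s8-curvature-absorption}. The appeal to the corollary ``with $\omega$ replaced by $c/2$'' only yields $\chi'\geq\frac c2\chi$, since that is all the corollary records. Your $\chi$ nevertheless satisfies $\chi'=e^{c\tau}=1+c\chi\geq c\chi$, so the conclusion survives once you check this directly.

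With these two corrections the proof is complete. Your derivations of the first-order inequality and of the \emph{in particular} forms are correct. Using $\alpha$ both for the constant and for the geodesic is confusing; the paper writes $\gamma_0$ for the geodesic.
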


\begin{proof}[Proof of Theorems~\ref{intro:thm-manifold-nonnegative}
and~\ref{intro:thm-manifold-negative}]
We first assume $\operatorname{Sec}\geq-K$ for $K>0$ and prove
the estimate for a fixed $\alpha>1$. The nonnegative-curvature
result will follow by taking limits. Retain $C$ and $c$ from
Theorem~\ref{intro:thm-manifold-negative}, and set $v=\log u$.
The test-function calculation in Section~\ref{sec:fully} gives
\[
 v_t=G(\nabla v,\nabla^2v),\quad G(p,X)=F(X+p\otimes p).
\]
Indeed, $\nabla^2(e^\varphi)=e^\varphi
(\nabla^2\varphi+d\varphi\otimes d\varphi)$, and positive
homogeneity permits the common factor to be removed at upper and
lower contacts.

Fix $0<a<b<T$. For $r>a$, define the time weights
\[
 \chi(r)=\frac{e^{c(r-a)}-1}{c},
\]
and
\[
 L(r)=\frac{\alpha Cc}{2}\coth\!\left(\frac{c(r-a)}{2}\right).
\]
When $c=0$, set $\chi(r)=r-a$ and $L(r)=\frac{\alpha C}{r-a}$.
For $a<s<t<b$, we claim that
\begin{equation}\label{eq:s8-comparison}
 Z(x,y,t,s):=v(x,t)-v(y,s)+\int_s^t L(r)\,dr
 +\frac{\alpha d(x,y)^2}{4\lambda(t-s)}\geq0.
\end{equation}
Suppose the claim fails. For small
$\varepsilon>0$, the function
\[
 Z_\varepsilon=Z+\frac{\varepsilon}{b-t}
\]
has a negative minimum at interior times. Here is the required
boundary localization. The function $v$ is bounded and uniformly
continuous on $M\times[a,b]$, and all the penalty terms are
nonnegative. The last term excludes $t\uparrow b$. If
$s\downarrow a$ while $t-s$ stays positive, the time integral
tends to infinity. If $t-s\to0$ along a sequence where
$Z_\varepsilon<0$, boundedness first forces $d(x,y)\to0$;
uniform continuity then gives $v(x,t)-v(y,s)\to0$. Such a
sequence cannot approach a negative infimum.

Fix a minimizing point and retain the notation $(x,y,t,s)$ for
it. For $r>a$, the time weights satisfy
\begin{equation}\label{eq:s8-weight-identities}
 \chi'>0,\quad \chi'\geq c\chi,\quad
 (\chi^2L)'=\alpha C(\chi')^2.
\end{equation}
The last identity matches the time penalty to the second variation.

\smallskip
\noindent\emph{The geodesic variation.}
Let $\gamma_0\colon[s,t]\to M$ be a constant-speed
minimizing geodesic from $y$ to $x$. Use the energy upper support $\mathscr E$ from the proof of
Corollary~\ref{intro:negative-weighted-heat}, with the present
weight $\chi$ and geodesic $\gamma_0$. Its endpoint field is
\eqref{eq:s8-endpoint-field}. This construction is valid at cut
points and, using the constant geodesic, when $x=y$. Thus the
spatial diagonal requires no separate regularization.

Replace $d^2$ in the penalty by this upper support and denote the
resulting smooth penalty, including $\frac{\varepsilon}{b-t}$,
by $\psi$. The local minimum is preserved. Keep the support and
its construction parameters fixed when differentiating the endpoint
times. In parallel frames, the first variation gives
\[
 -D_x\psi=D_y\psi=p,\quad
 p=-\frac{\alpha}{2\lambda}\dot\gamma_0,\quad
 H=\frac{\lambda}{\alpha}|p|^2.
\]
The time derivatives are
\[
 -\psi_t=-L(t)+H-\frac{\varepsilon}{(b-t)^2},
\]
and
\[
 \psi_s=-L(s)+H.
\]
The vector $p$ is understood at the respective endpoints by
parallel transport.

Let $A$ be any symmetric matrix with
$\lambda I\leq A\leq\Lambda I$. Choose a parallel orthonormal
frame $e_i$ diagonalizing $A$, and write its eigenvalues as $a_i$.
For the paired vector
\[
 E_i=(\chi(t)e_i(t),\chi(s)e_i(s)),
\]
the field in \eqref{eq:s8-endpoint-field} is $\chi e_i$.
The endpoint curves are geodesics, so their accelerations make
no contribution to the second variation. With the convention
$\langle R(e,E)E,e\rangle=\operatorname{Sec}(e\wedge E)$ for
orthonormal $e,E$, we obtain
\begin{align}\label{eq:s8-second-variation}
 \sum_i a_i\nabla^2_{x,y}\psi(E_i,E_i)
 &=\frac{\alpha}{2\lambda}
 \int_s^t\left\{
 \tr(A)(\chi')^2
 -\chi^2\sum_i a_i
 \langle R(e_i,\dot\gamma_0)\dot\gamma_0,e_i\rangle
 \right\}\,dr\notag\\
 &\leq\alpha C\int_s^t(\chi')^2\,dr
 +2(n-1)\Lambda KH\int_s^t\chi^2\,dr.
\end{align}
To verify the curvature bound when $\dot\gamma_0\ne0$, put
$E=\frac{\dot\gamma_0}{|\dot\gamma_0|}$. Since $a_i>0$,
the sectional curvature assumption yields
\[
 \sum_i a_i\langle R(e_i,\dot\gamma_0)\dot\gamma_0,e_i\rangle
 \geq-K|\dot\gamma_0|^2 \bigl(\tr A-\langle AE,E\rangle\bigr)
 \geq-(n-1)\Lambda K|\dot\gamma_0|^2.
\]
Now use $\tr A\leq n\Lambda$ and
$\frac{\alpha}{2\lambda}|\dot\gamma_0|^2=2H$.
If $\dot\gamma_0=0$, the curvature term is zero.

\smallskip
\noindent\emph{The matrix comparison.}
Apply Lemma~\ref{lemma:2.1 two-point-jets} in normal coordinates
at the endpoints. The corresponding Riemannian elliptic maximum principle
for semicontinuous functions is given in~\cite[Theorem~2.10]{AzagraFerreraSanz2008};
the lemma explains the passage to independent parabolic times.
At the fixed contact, choose its matrix parameter sufficiently
small that the spatial error is at most $\rho I_{2n}$. We obtain
closed jets with spatial Hessians $X,Y$ satisfying
\begin{align}\label{eq:s8-jets}
 G(p,X)&\leq-L(t)+H-\frac{\varepsilon}{(b-t)^2},\notag\\
 G(p,Y)&\geq-L(s)+H,\notag\\
 \begin{pmatrix}X&0\\0&-Y\end{pmatrix}
 &\geq-\nabla^2_{x,y}\psi-\rho I_{2n}.
\end{align}
The later endpoint supplies a closed subjet, and the earlier one a
closed superjet. The viscosity inequalities hold on these jets
because $G$ is continuous. Assumption~\eqref{eq:s8-invariance}
allows both endpoint inequalities to be written in the chosen
parallel frames.

Multiply the first two inequalities in \eqref{eq:s8-jets} by
$\chi(t)^2$ and $\chi(s)^2$, respectively, and subtract.
Using \eqref{eq:s8-weight-identities}, we find
\begin{align}\label{eq:s8-time-bound}
 &\chi(t)^2G(p,X)-\chi(s)^2G(p,Y)\notag\\
 &\quad\leq-\alpha C\int_s^t(\chi')^2\,dr
 +\bigl(\chi(t)^2-\chi(s)^2\bigr)H
 -\frac{\varepsilon\chi(t)^2}{(b-t)^2}.
\end{align}
Choose a minimizer $A$ in the lower Pucci operator applied to
\[
 \chi(t)^2(X+p\otimes p)
       -\chi(s)^2(Y+p\otimes p).
\]
Positive homogeneity and uniform ellipticity give
\begin{align}\label{eq:s8-ellipticity-bound}
 &\chi(t)^2G(p,X)-\chi(s)^2G(p,Y)\notag\\
 &\quad\geq\tr\!\left(A
       \bigl(\chi(t)^2X-\chi(s)^2Y\bigr)\right)
 +\bigl(\chi(t)^2-\chi(s)^2\bigr)\langle Ap,p\rangle.
\end{align}
Contract the last inequality in \eqref{eq:s8-jets} against
$E_i$ and sum with the positive weights $a_i$. Using
\eqref{eq:s8-second-variation} and $\tr A\leq n\Lambda$, we obtain
\begin{align*}
 &\tr\!\left(A\bigl(\chi(t)^2X-\chi(s)^2Y\bigr)\right)\\
 &\quad\geq-\alpha C\int_s^t(\chi')^2\,dr
 -2(n-1)\Lambda KH\int_s^t\chi^2\,dr
 -\rho n\Lambda\bigl(\chi(t)^2+\chi(s)^2\bigr).
\end{align*}
The choice of $c$ absorbs the curvature term. Indeed,
\begin{equation}\label{eq:s8-curvature-absorption}
 (\alpha-1)\bigl(\chi(t)^2-\chi(s)^2\bigr) =2(\alpha-1)\int_s^t\chi\chi'\,dr
 \geq2(n-1)\Lambda K\int_s^t\chi^2\,dr,
\end{equation}
since $\chi'\geq c\chi$. Also,
$\langle Ap,p\rangle\geq\lambda|p|^2=\alpha H$, and
$\chi(t)>\chi(s)$. Combining
\eqref{eq:s8-time-bound}, \eqref{eq:s8-ellipticity-bound}, and
\eqref{eq:s8-curvature-absorption} therefore gives
\[
 \frac{\varepsilon\chi(t)^2}{(b-t)^2}
 \leq\rho n\Lambda\bigl(\chi(t)^2+\chi(s)^2\bigr).
\]
Letting $\rho\downarrow0$ yields a contradiction. The matrices
$A,X,Y$ may depend on $\rho$, but the error is uniform because
$A\leq\Lambda I$. Thus no limit of the individual jets is needed.
This proves \eqref{eq:s8-comparison}. For fixed $0<s<t<T$,
choose $t<b<T$ and let $a\downarrow0$. Since
\[
 \int_s^t L(r)\,dr\longrightarrow
 \alpha C\log\frac{\sinh\!\left(\frac{ct}{2}\right)}
                         {\sinh\!\left(\frac{cs}{2}\right)},
\]
with the usual interpretation for $c=0$, exponentiating gives
the Harnack inequality in Theorem~\ref{intro:thm-manifold-negative}.

\smallskip
\noindent\emph{The differential inequality and the limits.}
Let $\varphi$ touch $v$ from below at $(x,t)$. In the logarithmic
Harnack inequality, take $s=t-h$ and $y=\exp_x(h\xi)$, where
$\xi\in T_xM$. The lower contact allows $v(y,s)$ to be replaced
by $\varphi(y,s)$. Divide by $h$ and let $h\downarrow0$ to get
\[
 \varphi_t-\langle\nabla\varphi,\xi\rangle
 +\frac{\alpha}{4\lambda}|\xi|^2
 +\frac{\alpha Cc}{2}\coth\!\left(\frac{ct}{2}\right)\geq0.
\]
Choosing $\xi=\frac{2\lambda}{\alpha}\nabla\varphi$ proves
the first-order inequality. For $c>0$ and $r>0$,
\[
 \frac{c}{2}\coth\!\left(\frac{cr}{2}\right)
 =\frac{c}{2}+\frac{c}{e^{cr}-1}
 \leq\frac{c}{2}+\frac{1}{r}.
\]
The bound extends to $c=0$ by continuity and gives the elementary
differential estimate. Integrating the
same inequality from $s$ to $t$ gives the elementary Harnack
estimate.

If $\operatorname{Sec}\geq0$, the estimates hold for every
$K>0$. First let $K\downarrow0$ with $\alpha>1$ fixed, using
\[
 \frac{\sinh\!\left(\frac{cs}{2}\right)}
      {\sinh\!\left(\frac{ct}{2}\right)}
 \longrightarrow\frac{s}{t},
\]
and
\[
 \frac{c}{2}\coth\!\left(\frac{ct}{2}\right)
 \longrightarrow\frac{1}{t}.
\]
Then let $\alpha\downarrow1$. This proves
Theorem~\ref{intro:thm-manifold-nonnegative}, including its
first-order consequence. Before sending $a$ to zero, the endpoint
weights tend to $t-a$ and $s-a$, as in Section~\ref{sec:fully}.
\end{proof}

\begin{remark}\label{rem:s8-curvature}
Ricci curvature controls the unweighted trace of the curvature
operator, whereas the contraction in
\eqref{eq:s8-second-variation} has the potentially unequal weights
$a_i$. Consequently, a Ricci lower bound alone need not provide
the required control when $\lambda<\Lambda$.

For $F(X)=\tr X$, take $\lambda=\Lambda=1$. Then $A=I$, and
the curvature contraction is exactly
$\Ric(\dot\gamma_0,\dot\gamma_0)$. The two theorems therefore
hold under $\Ric\geq0$ and $\Ric\geq-(n-1)Kg$, respectively.
In the latter case the elementary differential estimate reads
\[
 |\nabla\log u|^2-\alpha(\log u)_t
 \leq\frac{n\alpha^2}{2t}
 +\frac{n(n-1)\alpha^2K}{4(\alpha-1)},
\]
which is the Li-Yau-Davies estimate~\cite{Davies1989}.
\end{remark}

% END MAIN PROOFS
\bibliographystyle{alpha}
\bibliography{references}

\begin{thebibliography}{LNVV09}

\bibitem[AB79]{AronsonBenilan1979}
Donald~G. Aronson and Philippe B{\'e}nilan.
\newblock R{\'e}gularit{\'e} des solutions de l'{\'e}quation des milieux poreux dans {$\mathbb{R}^N$}.
\newblock {\em C. R. Acad. Sci. Paris S{\'e}r. A-B}, 288:A103--A105, 1979.

\bibitem[AB94]{AuchmutyBao1994}
Giles Auchmuty and David Bao.
\newblock Harnack-type inequalities for evolution equations.
\newblock {\em Proceedings of the American Mathematical Society}, 122(1):117--129, 1994.

\bibitem[AC09]{AndrewsClutterbuck2009}
Ben Andrews and Julie Clutterbuck.
\newblock Time-interior gradient estimates for quasilinear parabolic equations.
\newblock {\em Indiana University Mathematics Journal}, 58(1):351--380, 2009.

\bibitem[AC13]{AndrewsClutterbuck2013}
Ben Andrews and Julie Clutterbuck.
\newblock Sharp modulus of continuity for parabolic equations on manifolds and lower bounds for the first eigenvalue.
\newblock {\em Analysis \& PDE}, 6(5):1013--1024, 2013.

\bibitem[AFS08]{AzagraFerreraSanz2008}
Daniel Azagra, Juan Ferrera, and Beatriz Sanz.
\newblock Viscosity solutions to second order partial differential equations on {Riemannian} manifolds.
\newblock {\em Journal of Differential Equations}, 245:307--336, 2008.

\bibitem[AHS26]{AndrewsHauerSlegers2026}
Ben Andrews, Daniel Hauer, and Jessica Slegers.
\newblock A multi-point maximum principle to prove global {Harnack} inequalities for {Schr\"odinger} operators.
\newblock {\em Calculus of Variations and Partial Differential Equations}, 65, 2026.
\newblock Paper No. 112, 23 pp.

\bibitem[AT10]{ArmstrongTrokhimtchouk2010}
Scott~N. Armstrong and Maxim Trokhimtchouk.
\newblock Long-time asymptotics for fully nonlinear homogeneous parabolic equations.
\newblock {\em Calculus of Variations and Partial Differential Equations}, 38:521--540, 2010.

\bibitem[BBG17]{BakryBolleyGentil2017}
Dominique Bakry, Fran{\c c}ois Bolley, and Ivan Gentil.
\newblock The {Li-Yau} inequality and applications under a curvature-dimension condition.
\newblock {\em Annales de l'Institut Fourier}, 67(1):397--421, 2017.

\bibitem[BGM26]{BanerjeeGarofaloMahmoudian2026}
Agnid Banerjee, Nicola Garofalo, and Hamidreza Mahmoudian.
\newblock A nonlinear {Li-Yau} inequality and its consequences, 2026.
\newblock arXiv:2608.28881v2, 1 September 2026.

\bibitem[Cho92]{Chow1992}
Bennett Chow.
\newblock The {Yamabe} flow on locally conformally flat manifolds with positive {Ricci} curvature.
\newblock {\em Comm. Pure Appl. Math.}, 45(8):1003--1014, 1992.

\bibitem[CIL92]{CIL}
Michael~G. Crandall, Hitoshi Ishii, and Pierre-Louis Lions.
\newblock User's guide to viscosity solutions of second order partial differential equations.
\newblock {\em Bull. Amer. Math. Soc. (N.S.)}, 27(1):1--67, 1992.

\bibitem[Dav89]{Davies1989}
E.~B. Davies.
\newblock {\em Heat Kernels and Spectral Theory}, volume~92 of {\em Cambridge Tracts in Mathematics}.
\newblock Cambridge University Press, Cambridge, 1989.

\bibitem[EV90]{EstebanVazquez1990}
J.~R. Esteban and Juan~Luis V{\'a}zquez.
\newblock R{\'e}gularit{\'e} des solutions positives de l'{\'e}quation parabolique {$p$}-laplacienne.
\newblock {\em C. R. Acad. Sci. Paris S{\'e}r. I Math.}, 310:105--110, 1990.

\bibitem[Ham93a]{HamiltonRicci1993}
Richard~S. Hamilton.
\newblock The {Harnack} estimate for the {Ricci} flow.
\newblock {\em J. Differential Geom.}, 37(1):225--243, 1993.

\bibitem[Ham93b]{Hamilton1993}
Richard~S. Hamilton.
\newblock A matrix {Harnack} estimate for the heat equation.
\newblock {\em Communications in Analysis and Geometry}, 1(1):113--126, 1993.

\bibitem[HL11]{HarveyLawson2011}
F.~Reese Harvey and H.~Blaine Lawson, Jr.
\newblock Dirichlet duality and the nonlinear {Dirichlet} problem on {Riemannian} manifolds.
\newblock {\em Journal of Differential Geometry}, 88(3):395--482, 2011.

\bibitem[JK06]{JuutinenKawohl2006}
Petri Juutinen and Bernd Kawohl.
\newblock On the evolution governed by the infinity {Laplacian}.
\newblock {\em Mathematische Annalen}, 335(4):819--851, 2006.

\bibitem[JL25]{JeffresLi2025}
Thalia~D. Jeffres and Xiaolong Li.
\newblock Gradient bounds for viscosity solutions to certain elliptic equations.
\newblock {\em arXiv:2511.02073}, 2025.
\newblock Preprint.

\bibitem[KL14]{KimLee2014}
Soojung Kim and Ki-Ahm Lee.
\newblock Parabolic {Harnack} inequality of viscosity solutions on {Riemannian} manifolds.
\newblock {\em Journal of Functional Analysis}, 267(7):2152--2198, 2014.

\bibitem[KN09]{KotschwarNi2009}
Brett Kotschwar and Lei Ni.
\newblock Local gradient estimates of {$p$}-harmonic functions, {$1/H$}-flow, and an entropy formula.
\newblock {\em Ann. Sci. {\'E}c. Norm. Sup{\'e}r. (4)}, 42(1):1--36, 2009.

\bibitem[KS25]{KurkinenSiltakoski2025}
Tapio Kurkinen and Jarkko Siltakoski.
\newblock Intrinsic {Harnack}'s inequality for a general nonlinear parabolic equation in non-divergence form.
\newblock {\em Potential Analysis}, 62:393--437, 2025.

\bibitem[Li21]{Li2021}
Xiaolong Li.
\newblock Modulus of continuity estimates for fully nonlinear parabolic equations.
\newblock {\em Calculus of Variations and Partial Differential Equations}, 60, 2021.
\newblock Paper No. 182.

\bibitem[LNVV09]{LuNiVazquezVillani2009}
Peng Lu, Lei Ni, Juan~Luis V{\'a}zquez, and C{\'e}dric Villani.
\newblock Local {Aronson-B{\'e}nilan} estimates and entropy formulae for porous medium and fast diffusion equations on manifolds.
\newblock {\em J. Math. Pures Appl. (9)}, 91(1):1--19, 2009.

\bibitem[LTW24]{LiTuWang2024}
Xiaolong Li, Yucheng Tu, and Kui Wang.
\newblock On a class of quasilinear operators on smooth metric measure spaces.
\newblock {\em Communications in Analysis and Geometry}, 32(10):2757--2803, 2024.

\bibitem[LW17]{LiWang2017}
Xiaolong Li and Kui Wang.
\newblock Moduli of continuity for viscosity solutions on manifolds.
\newblock {\em Journal of Geometric Analysis}, 27:557--576, 2017.

\bibitem[LX11]{LiXu2011}
Junfang Li and Xiangjin Xu.
\newblock Differential {Harnack} inequalities on {Riemannian} manifolds {I}: Linear heat equation.
\newblock {\em Advances in Mathematics}, 226(5):4456--4491, 2011.

\bibitem[LY86]{LiYau1986}
Peter Li and Shing-Tung Yau.
\newblock On the parabolic kernel of the {Schr\"odinger} operator.
\newblock {\em Acta Mathematica}, 156:153--201, 1986.

\bibitem[Mos64]{Moser1964}
J{\"u}rgen Moser.
\newblock A {Harnack} inequality for parabolic differential equations.
\newblock {\em Communications on Pure and Applied Mathematics}, 17(1):101--134, 1964.

\bibitem[PV13]{PortilheiroVazquez2013}
Manuel Portilheiro and Juan~Luis V{\'a}zquez.
\newblock Degenerate homogeneous parabolic equations associated with the infinity-{Laplacian}.
\newblock {\em Calculus of Variations and Partial Differential Equations}, 46(3-4):705--724, 2013.

\bibitem[PV20]{ParviainenVazquez2020}
Mikko Parviainen and Juan~Luis V{\'a}zquez.
\newblock Equivalence between radial solutions of different parabolic gradient-diffusion equations and applications.
\newblock {\em Ann. Sc. Norm. Super. Pisa Cl. Sci. (5)}, 21:303--359, 2020.

\bibitem[SZ06]{SoupletZhang2006}
Philippe Souplet and Qi~S. Zhang.
\newblock Sharp gradient estimate and {Yau}'s {Liouville} theorem for the heat equation on noncompact manifolds.
\newblock {\em Bulletin of the London Mathematical Society}, 38(6):1045--1053, 2006.

\bibitem[Wan17]{Wang2017}
Yu-Zhao Wang.
\newblock Differential {Harnack} estimates and entropy formulae for weighted {$p$}-heat equations.
\newblock {\em Results in Mathematics}, 71:1499--1520, 2017.

\bibitem[Wan21]{Wang2021}
Yu-Zhao Wang.
\newblock {Perelman} type entropy formulae and differential {Harnack} estimates for weighted doubly nonlinear diffusion equations under curvature dimension condition.
\newblock {\em Bulletin of the Korean Mathematical Society}, 58(6):1539--1561, 2021.

\bibitem[WW20]{WangWang2020}
Yu-Zhao Wang and Xue-Ming Wang.
\newblock Difference {Harnack} estimates for weighted nonlinear reaction-diffusion equations on weighted {Riemannian} manifolds.
\newblock {\em Journal of Mathematics}, 40(6):653--661, 2020.

\bibitem[WYC13]{WangYangChen2013}
Yu-Zhao Wang, J.~Yang, and W.~Chen.
\newblock Gradient estimates and entropy formulae for weighted {$p$}-heat equations on smooth metric measure spaces.
\newblock {\em Acta Math. Sci. Ser. B Engl. Ed.}, 33(4):963--974, 2013.

\bibitem[Zha24]{ZhangSurvey}
Qi~S. Zhang.
\newblock Log gradient estimates for heat type equations on manifolds.
\newblock In Gang Tian, Qing Han, and Zhenlei Zhang, editors, {\em Surveys in Geometric Analysis 2023}, pages 186--212. Science Press, Beijing, 2024.

\bibitem[Zha25]{ZhangSharp2025}
Qi~S. Zhang.
\newblock A sharp {Li-Yau} gradient bound on compact manifolds.
\newblock {\em Communications in Analysis and Geometry}, 33(1):261--274, 2025.

\end{thebibliography}
\end{document}